\documentclass[12pt,leqno,a4paper]{amsart}

\usepackage{amsmath}
\usepackage{amssymb}
\usepackage{enumerate}
\usepackage{txfonts}
\usepackage{mathrsfs}
\usepackage{hyperref}
\usepackage[all]{xy}
\usepackage{xcolor}
\usepackage{framed}

\overfullrule 5pt

\textwidth160mm
\oddsidemargin5mm
\evensidemargin5mm

 %% Fonts
 %%%%%%%%%%%%%%%%%%
\newcommand{\CC}{\mathbb{C}}
\newcommand{\FF}{\mathbb{F}}
\newcommand{\ZZ}{\mathbb{Z}}

\newcommand{\cA}{\mathcal{A}}

\newcommand{\cD}{\mathcal{D}}
\newcommand{\cG}{\mathcal{G}}
\newcommand{\cH}{\mathcal{H}}
\newcommand{\cI}{\mathcal{I}}
\newcommand{\cO}{\mathcal{O}}
\newcommand{\cP}{\mathcal{P}}
\newcommand{\cQ}{\mathcal{Q}}
\newcommand{\cT}{\mathcal{T}}
\newcommand{\cU}{\mathcal{U}}
\newcommand{\cX}{\mathcal{X}}

\newcommand{\fS}{\mathfrak{S}}
\newcommand{\Cl}{\mathfrak{Cl}}

\newcommand{\ty}[1]{\mathsf{#1}}

 %% Operatoren
 %%%%%%%%%%%%%%%%
\newcommand{\Alp}{\operatorname{Alp}}
\newcommand{\Aut}{\operatorname{Aut}}
\newcommand{\bl}{\operatorname{bl}}
\newcommand{\Char}{\operatorname{Char}}

\newcommand{\dz}{\operatorname{dz}}

\newcommand{\Ind}{\operatorname{Ind}}
\newcommand{\Irr}{\operatorname{Irr}}
\newcommand{\Lin}{\operatorname{Lin}}

\newcommand{\Res}{\operatorname{Res}}
\newcommand{\trace}{\operatorname{trace}}
\newcommand{\rdz}{\operatorname{rdz}}
\newcommand{\mrO}{\operatorname{O}}
\newcommand{\Z}{\operatorname{Z}}
\newcommand{\N}{\operatorname{N}}
\newcommand{\C}{\operatorname{C}}

\newcommand{\GL}{\operatorname{GL}}

 %% decorated
 %%%%%%%%%%%%%%%%
\newcommand{\tvhi}{\widetilde{\vhi}}
\newcommand{\hvhi}{\widehat{\vhi}}

\newcommand{\tchi}{\widetilde{\chi}}

\newcommand{\hchi}{\widehat{\chi}}
\newcommand{\ttheta}{\widetilde{\theta}}

\newcommand{\hG}{\widehat{G}}
\newcommand{\tB}{{\widetilde{B}}}

\newcommand{\tD}{\widetilde{D}}
\newcommand{\tG}{\widetilde{G}}
\newcommand{\tH}{\widetilde{H}}
\newcommand{\tM}{\widetilde{M}}
\newcommand{\tN}{\widetilde{N}}

\newcommand{\tR}{\widetilde{R}}

\let\al=\alpha
\let\eps=\epsilon
\let\la=\lambda
\let\vhi=\varphi
\let\ze=\zeta
\let\ti=\times

\theoremstyle{theorem}

\newtheorem{thm}{Theorem}[section]
\newtheorem{lem}[thm]{Lemma}
\newtheorem{prop}[thm]{Proposition}
\newtheorem{cor}[thm]{Corollary}

\theoremstyle{definition}
\newtheorem{defn}[thm]{Definition}

\newtheorem{rmk}[thm]{Remark}

\numberwithin{equation}{section}

\raggedbottom

\begin{document}

%%%%%%%%%%%%%%%%%%%%%%%%%%%%%%%%%%%%%%%%%%%%%%%%%%%%%%%%%%%%
\title[Character triples and weights]{Character triples and weights}
%%%%%%%%%%%%%%%%%%%%%%%%%%%%%%%%%%%%%%%%%%%%%%%%%%%%%%%%%%%%

\author{Zhicheng Feng}
\address{Shenzhen International Center for Mathematics and Department of Mathematics, Southern University of Science and Technology, Shenzhen 518055, China}
\makeatletter
\email{fengzc@sustech.edu.cn}
\makeatother

%\thanks{}

\begin{abstract}
We define a new relation between character triples and prove some Clifford theory properties for weights in terms of character triples.
\end{abstract}

\keywords{character triples, block isomorphisms, weights, Alperin weight conjecture}

\subjclass[2020]{20C20, 20C25, 20C15}

\date{\today}

\maketitle

%\tableofcontents 

\pagestyle{myheadings}

%%%%%%%%%%%%%%%%%%%%%%%%%%%%%%%%%%%%%%%%%%%%%%%%%%%%%%%%%%%%%%%%%%%%%%%%%
\section{Introduction}

Local-global conjectures establish profound connections between the representation theory of finite groups and their local subgroups.
Since the landmark reduction of the McKay conjecture by Isaacs, Malle, and Navarro \cite{IMN07} in 2007, significant progress has been made in reducing some local-global conjectures, proposed by Alperin, Brauer, and Dade etc, to questions on finite simple groups;
see for instance the works of Navarro--Tiep \cite{NT11}, Navarro--Sp\"ath \cite{NS14} and Sp\"ath \cite{Sp13b,Sp13,Sp17}. 
Combining these reduction theorems with the classification of finite simple groups has recently led to breakthroughs in longstanding problems, including the proofs of the Brauer height zero conjecture and the McKay conjecture (cf. \cite{CS24,KM13,MNST24,MS16,Ru24}).

Even though the inductive conditions of local-global conjectures are often much stronger than the original conjectures, the reduction framework provides powerful machinery in investigating the conjectures.
In \cite{Sp17b}, Sp\"ath reformulated some of the inductive conditions in terms of character triples, using the central and block isomorphisms between character triples which were first introduced by Navarro and Sp\"ath \cite{NS14}.

The central and block isomorphisms between character triples have been extensively studied, and have also proved helpful in the verification of the inductive conditions, for example, in the reduction of the inductive Alperin--McKay condition and the inductive blockwise Alperin weight condition to quasi-isolated blocks by Ruhstorfer \cite{Ru22}, and by the author, Li and Zhang \cite{FLZ22},
in the verification of the inductive Alperin--McKay condition for quasi-isolated 2-blocks by Ruhstorfer \cite{Ru24}, and in the proof of the inductive blockwise Alperin weight condition for type $\mathsf A$ by the author, Li and Zhang \cite{FLZ23}.
Moreover, Rossi \cite{Ro23,Ro24} and Mart\'inez--Rizo--Rossi \cite{MRR23} achieved the self-reducing forms for several local-global conjectures.
Among these works, Clifford properties in central and block isomorphisms between character triples and modular character triples are crucial.

In this paper, we focus more on the Alperin weight conjecture, which Alperin proposed \cite{Al87} in 1986 and reduced to simple groups by Navarro--Tiep \cite{NT11} and Sp\"ath \cite{Sp13}.
We generalize the central and block isomorphisms between character triples and prove Clifford theory properties for weights in terms of character triples.

Recall that some Clifford theory properties for weights in terms of modular character triples have already been achieved in \cite{FLZ22,FLZ23}.
In the verification of the inductive blockwise Alperin weight condition for groups of Lie type in non-defining characteristics, we often need unitriangular basic sets to describe the behavior of Brauer characters, since the ordinary characters of groups of Lie type were better understood by Deligne--Lusztig theory (cf. \cite{Lu84}). 
For this reason, we consider the ordinary character triples and the Clifford theory properties for weights.
To do this, we need a new relation between character triples, especially in the case that the quotient groups have orders divisible by $p$.

The results in this paper proved helpful in investigating Alperin weight conjecture, especially for groups of Lie type.
We mention that this paper can provide part of the crucial techniques in establishing the relation between weights and generic weights of groups of Lie type at good primes in the author's joint work with Malle and Zhang \cite{FMZ24}. 

The structure of this paper is as follows.
Section~\ref{sec:Preli} gives some preliminary results for the induced blocks and Dade's ramification groups.
In Section~\ref{sec:new-relation}, we generalize the central and block isomorphisms between character triples, as well as generalize some properties.
In Section~\ref{sec:construction}, we prove results on constructing the new relation for more character triples. 
Section~\ref{sec:conclusion} concludes with some Clifford theory properties for weights in terms of character triples.

%%%%%%%%%%%%%%%%%%%%%%%%%%%%%%%%%%%%%%%%%%%%%%%%%%%%%%%%%%%%%%%%%%%%%%%%%
\section{Induced blocks}   \label{sec:Preli}

We first give some notation used in this paper.

%%%%%%%%%%%%%%%%%%%%%%%%%%%%%%%%%%%%%
\subsection{Notation}

%All groups considered in this paper are finite.
If a group $G$ acts on a set $X$, we let $G_x$ denote the stabiliser of
$x\in X$ in $G$, and analogously we denote the setwise stabiliser of
$X'\subseteq X$ in $G$ by $G_{X'}$.
If $H\le G$, then we denote by $X/\!\sim_H$ the set of $H$-orbits on $X$.
Moreover, if a group $G$ acts on two sets $X$, $Y$ and $x\in X$, $y\in Y$, we
denote by $G_{x,y}$ the stabiliser of $y$ in $G_x$. For a positive integer $n$,
we denote the symmetric group on $n$ symbols by $\fS_n$.

Suppose that $G$ is a finite group and $H\le G$.
We denote the restriction of $\chi\in\Irr(G)$ to $H$ by $\Res_H^G(\chi)$, and
$\Ind^G_H(\theta)$ denotes the character induced from $\theta\in\Irr(H)$ to $G$.
As usual, for $\theta\in\Irr(H)$ the set of irreducible constituents of
$\Ind_H^G(\theta)$ is denoted by $\Irr(G\mid\theta)$, while $\Irr(H\mid\chi)$
denotes the set of irreducible constituents of $\Res_H^G(\chi)$ for
$\chi\in\Irr(G)$. For a subset $\cH\subseteq\Irr(H)$, we define
\[\Irr(G\mid\cH)=\bigcup_{\theta\in\cH}\Irr(G\mid\theta)\]
and for a subset $\cG\subseteq\Irr(G)$, we define
\[\Irr(H\mid\cG)=\bigcup_{\chi\in\cG}\Irr(H\mid\chi).\]
Additionally, for $N \unlhd G$, we sometimes identify the characters of $G/N$
with the characters of $G$ whose kernel contains $N$.

Let $p$ be a prime number. Throughout this paper, all modular representations considered
are with respect to $p$. For $\chi\in\Irr(G)$, the $p$-block of $G$
containing $\chi$ is denoted by $\bl(\chi)$, which is also denoted by
$\bl_G(\chi)$ where we add a subscript to indicate the ambient group $G$.
If $b$ is a union of blocks of $G$, then we write
$\Irr(b)=\bigcup_{B\in b}\Irr(B)$. For a block $b$ of a subgroup $H\le G$ we
denote by $b^G$ the induced block of $G$, when it is defined.

Denote by $\dz(G)$ the set of irreducible characters of $G$ of ($p$-)defect
zero. Let $N\unlhd G$ and $\theta\in\Irr(N)$. We set
\[\rdz(G\mid\theta)
=\{\,\chi\in\Irr(G\mid\theta)\mid \chi(1)_p/\theta(1)_p=|G/N|_p \,\}.\]
If moreover $\theta\in\dz(N)$, then $\rdz(G\mid\theta)\subseteq\dz(G)$, and
then we also write $\dz(G\mid\theta)$ for $\rdz(G\mid\theta)$.

Let $\Lin(G)$ denote the set of linear characters of $G$, which can be seen
as a multiplicative group. Then $\Lin(G)$ acts on $\Irr(G)$ by multiplication.
The Hall $p'$-subgroup of $\Lin(G)$ is denoted $\Lin_{p'}(G)$.

Denote by $\mrO_p(G)$ the largest normal $p$-subgroup of $G$. Similarly,
$\mrO_{p'}(G)$ denotes the largest normal $p'$-subgroup of $G$.
If $A$ is an abelian group, then we also write $A_p$ for its Sylow
$p$-subgroup and write $A_{p'}$ for its Hall $p'$-subgroup;
note that $A_p=\mrO_p(A)$ and $A_{p'}=\mrO_{p'}(A)$.

%%%%%%%%%%%%%%%%%%%%%%%%%%%%%%%%%%%%%%%%%%%%%%%%%%%%%%%%%%%%%%%%%%%%%%%%%

\subsection{Induced blocks}\label{subsec:induced-blocks}

Let $\cO$ be the ring of algebraic integers in $\CC$ and $I$ be a maximal ideal
of $\cO$ with $p \cO \subseteq I$. Then the quotient $\FF=\cO/I$ is an
algebraically closed field of characteristic $p$, see \cite[\S 2]{Na98}.
Let ${}^*:\cO\to\FF$ be the natural ring homomorphism.

Let $G$ be a finite group.
For every subset $C\subseteq G$, we denote by $C^+$ the sum $\sum_{x\in C}x$ which is seen as an element of $\FF G$ or $\ZZ G$.
If $g\in G$, we denote by $\Cl_G(g)$ the conjugacy class of $G$ containing $g$.
For every $\chi\in\Irr(G)$ we denote by $\la_\chi\colon\Z(\FF G)\to\FF$ the associated central character. 
Recall that \[\la_\chi(\Cl_G(g)^+)=(|\Cl_G(g)|\chi(g)/\chi(1))^*\] for $g\in G$.
For a block $B$ of $G$, the central function $\la_B=\la_\chi$ for any $\chi\in\Irr(B)$.

\begin{lem}\label{lemma:blockinductions1}
	Let $N\unlhd G$, $H\le G$ and $M=H\cap N$.
	Suppose that there exist characters $\ttheta\in\Irr(G)$ and $\tvhi\in\Irr(H)$ with $\Res^G_{N}(\ttheta)\in\Irr(N)$ and $\Res^H_M(\tvhi)\in\Irr(M)$.
	Assume that $\bl(\tvhi)^G$ is defined and equals $\bl(\ttheta)$.
	For $N\le J\unlhd G$, if $\bl(\Res^H_{J\cap H}(\tvhi))^J$ is defined, then $\bl(\Res^H_{J\cap H}(\tvhi))^J=\bl(\Res^G_J(\ttheta))$.
\end{lem}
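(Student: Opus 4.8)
The plan is to establish the equality of the two blocks by comparing their central characters on a suitable subalgebra of $\Z(\FF J)$. Write $L=J\cap H$, $\theta=\Res^G_J(\ttheta)$ and $\vhi=\Res^H_L(\tvhi)$. First I would record two reductions. Since $\Res^G_N(\ttheta)\in\Irr(N)$ and $N\le J$, and restriction to a subgroup cannot turn a reducible character into an irreducible one, we get $\theta\in\Irr(J)$; likewise $M=H\cap N\le L$ gives $\vhi\in\Irr(L)$. Moreover $\theta$, being the restriction to $J$ of the $G$-class function $\ttheta$, is $G$-invariant, so $\bl(\theta)$ is a $G$-stable block of $J$. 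Since we are given that $B':=\bl(\vhi)^J$ is defined, the assertion becomes $B'=\bl(\theta)$.

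Next I would isolate the elementary local fact to be used twice: if $Y\unlhd X$ and $\chi\in\Irr(X)$ satisfies $\chi_0:=\Res^X_Y(\chi)\in\Irr(Y)$, then
\[\la_\chi(\Cl_X(y)^+)=\la_{\chi_0}(\Cl_X(y)^+)\qquad\text{for every }y\in Y,\]
where on the right $\Cl_X(y)^+$ is read inside $\Z(\FF Y)$ — it is a union of $Y$-classes because $Y\unlhd X$. This is a one-line computation: writing $\Cl_X(y)$ as the disjoint union of the $Y$-classes $\Cl_Y(y_i)$ it contains, one has $\chi_0(y_i)=\chi(y_i)=\chi(y)$, $|\Cl_Y(y_i)|=|\Cl_Y(y)|$ and $\sum_i|\Cl_Y(y_i)|=|\Cl_X(y)|$, so $\sum_i(|\Cl_Y(y_i)|\chi_0(y_i)/\chi_0(1))^{*}=(|\Cl_X(y)|\chi(y)/\chi(1))^{*}$.

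Then I would fix $y\in J$. Because $J\unlhd G$ we have $\Cl_G(y)\subseteq J$, hence $\Cl_G(y)\cap H=\Cl_G(y)\cap L\subseteq L$, which is at the same time a union of $H$-classes and a union of $J$-classes $\Cl_J(y_i)$. Applying the local fact to $(\ttheta,G,J)$, then the hypothesis $\bl(\tvhi)^G=\bl(\ttheta)$ (which reads $\la_{\ttheta}(\Cl_G(y)^+)=\la_{\tvhi}((\Cl_G(y)\cap H)^+)$), then the local fact to $(\tvhi,H,L)$ applied to each $H$-class inside $\Cl_G(y)\cap H$, and finally the defining identity of $B'=\bl(\vhi)^J$ applied to each $\Cl_J(y_i)$, I obtain
\begin{align*}
\la_\theta(\Cl_G(y)^+)&=\la_{\ttheta}(\Cl_G(y)^+)=\la_{\tvhi}\big((\Cl_G(y)\cap H)^+\big)\\
&=\la_{\vhi}\big((\Cl_G(y)\cap L)^+\big)=\la_{B'}(\Cl_G(y)^+).
\end{align*}
Thus $\la_\theta$ and $\la_{B'}$ agree on all the class sums $\Cl_G(y)^+$ with $y\in J$, i.e.\ on the $G$-fixed subalgebra of $\Z(\FF J)$, which has precisely these orbit sums as a basis. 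As $\bl(\theta)$ is $G$-stable, its primitive central idempotent $e\in\Z(\FF J)$ lies in this subalgebra, so evaluating the displayed equality at $e$ yields $1=\la_\theta(e)=\la_{B'}(e)$, which forces $B'=\bl(\theta)$.

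I expect the genuinely delicate point to be this last step. One cannot hope to identify $B'$ by relating $\la_{B'}(\Cl_J(y)^+)$ to $\la_{B'}(\Cl_G(y)^+)$ and cancelling the factor $[G:J\,\C_G(y)]$, since this index may be divisible by $p$ — exactly the situation this paper is built to handle; passing to the idempotent $e$, which still lies in the subalgebra on which the two central characters provably coincide, is what makes the argument go through. The rest is routine bookkeeping — in particular, checking that every class sum $\Cl(y)^+$ appearing is central in the relevant group algebra, which holds since all the classes in play lie inside $L\subseteq J$.
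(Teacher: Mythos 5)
Your proof is correct, but it takes a genuinely different route from the paper's. The paper disposes of the lemma in three lines: since $J\unlhd G$ and $\bl(\tvhi)^G$ is defined, \cite[Lemma~3.2]{KS15} gives that $\bl(\ttheta)=\bl(\tvhi)^G$ covers $\bl(\Res^H_{J\cap H}(\tvhi))^J$ whenever the latter is defined, while the irreducibility of $\Res^G_J(\ttheta)$ forces $\bl(\ttheta)$ to cover exactly one block of $J$, namely $\bl(\Res^G_J(\ttheta))$. Your argument replaces the citation by a self-contained central-character computation. The chain
$\la_\theta(\Cl_G(y)^+)=\la_{\ttheta}(\Cl_G(y)^+)=\la_{\tvhi}((\Cl_G(y)\cap H)^+)=\la_{\vhi}((\Cl_G(y)\cap L)^+)=\la_{B'}(\Cl_G(y)^+)$
is valid: each step uses only that $\Cl_G(y)$ is a union of $J$-classes, that $\Cl_G(y)\cap H$ is a union of $H$-classes, and that $\Cl_G(y)\cap L$ is a union of $L$-classes. (One small slip: you assert that $\Cl_G(y)\cap L$ is a union of $J$-classes, which is false in general since $J$-conjugation need not preserve $H$; but your computation never uses this, only the decomposition of $\Cl_G(y)$ itself into $J$-classes, so nothing is harmed.) The closing step --- evaluating the two central characters at the $G$-stable idempotent $e_{\bl(\theta)}\in\Z(\FF J)^G$ --- is exactly the right way to finish, for the reason you identify: one cannot recover $\la_{B'}$ on individual $J$-class sums from its values on $G$-class sums, because the relevant index may vanish in $\FF$. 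In effect you have reproved the special case of the Koshitani--Sp\"ath covering lemma needed here together with the uniqueness of the covered block; the paper's route buys brevity, yours buys independence from the covering formalism.
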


\begin{proof}
	Since $J\unlhd G$, if $\bl(\Res^H_{J\cap H}(\tvhi))^J$ is defined, then by \cite[Lemma~3.2]{KS15}, $\bl(\ttheta)=\bl(\tvhi)^G$ covers $\bl(\Res^H_{J\cap H}(\tvhi))^J$.
	On the other hand, since $\Res^G_J(\ttheta)$ is irreducible, $\bl(\ttheta)$ covers a unique block of $J$, that is, $\bl(\Res^G_J(\ttheta))$.
	Thus $\bl(\Res^H_{J\cap H}(\tvhi))^J=\bl(\Res^G_J(\ttheta))$.	
\end{proof}

\begin{lem}\label{lem:central-char}
	Let $N\unlhd G$ and $\ttheta\in\Irr(G)$ with $\Res^G_N(\ttheta)\in\Irr(N)$.
	Let $\overline G=G/N$, $x\in G$ and $\ttheta_x=\Res^G_{\langle N,x\rangle}(\ttheta)$.
	\begin{enumerate}[\rm(a)]
		\item $\la_{\ttheta_x}((\Cl_G(x)\cap xN)^+)=\la_{\ttheta_{y}}((\Cl_G(y)\cap yN)^+)$ for every $y\in\Cl_G(x)$.
		\item Let $\overline x=xN\in\overline G$, $\overline\eta\in\Irr(\overline G)$ and $\eta$ be the inflation of $\overline\eta$ to $G$.
		Then \[\la_{\ttheta\eta}(\Cl_G(x)^+)=\la_{\ttheta_x}((\Cl_G(x)\cap xN)^+)\la_{\overline\eta}(\Cl_{\overline G}(\overline x)^+).\]
	\end{enumerate}
\end{lem}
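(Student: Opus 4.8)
The key tool throughout is the formula $\la_\chi(\Cl_G(g)^+)=(|\Cl_G(g)|\,\chi(g)/\chi(1))^*$, applied both in $G$ and in subgroups such as $\langle N,x\rangle$, together with the fact that $\Res^G_N(\ttheta)$ irreducible forces $\ttheta(1)=\theta(1)$ where $\theta=\Res^G_N(\ttheta)$, and similarly $\ttheta_x(1)=\theta(1)$ since $\langle N,x\rangle/N$ is cyclic and $\ttheta_x$ lies over the $\langle N,x\rangle$-stable character $\theta$; actually what I really need is just $\ttheta_x(1)=\ttheta(1)$, which holds because $\Res^G_N(\ttheta)$ is already irreducible. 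For part~(a), I would first observe that $\Cl_G(x)\cap xN$ is a union of $\langle N,x\rangle$-conjugacy classes (indeed of $N$-conjugacy classes, since all elements of $xN$ are congruent mod $N$), so $(\Cl_G(x)\cap xN)^+$ is a genuine element of $\Z(\FF\langle N,x\rangle)$ and $\la_{\ttheta_x}$ is defined on it. Now for $y=x^g$ with $g\in G$, conjugation by $g$ gives a bijection $\langle N,x\rangle\to\langle N,y\rangle$ (using $N\unlhd G$) carrying $\ttheta_x$ to a character of $\langle N,y\rangle$ that restricts $\ttheta$; but $\ttheta$ is $G$-stable as a character of $G$, so $(\ttheta_x)^g=\Res^G_{\langle N,y\rangle}(\ttheta^g)=\Res^G_{\langle N,y\rangle}(\ttheta)=\ttheta_y$. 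Since conjugation by $g$ also sends $\Cl_G(x)\cap xN$ bijectively onto $\Cl_G(y)\cap yN$, evaluating $\ttheta_x$ on a representative $z$ of a summand-class and $\ttheta_y$ on $z^g$ gives equal values, and the class sizes in $\langle N,x\rangle$ and $\langle N,y\rangle$ match up; summing and applying ${}^*$ yields the claimed equality. I expect this to be essentially bookkeeping once the conjugation-equivariance of $\ttheta_x\mapsto\ttheta_y$ is set up correctly.

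For part~(b), the plan is to compute $\la_{\ttheta\eta}(\Cl_G(x)^+)=\bigl(|\Cl_G(x)|\,(\ttheta\eta)(x)/(\ttheta\eta)(1)\bigr)^* = \bigl(|\Cl_G(x)|\,\ttheta(x)\eta(x)/(\ttheta(1)\eta(1))\bigr)^*$, and to match this against the product of the two central-character values on the right. Since $\eta$ is inflated from $\overline\eta$, we have $\eta(x)=\overline\eta(\overline x)$ and $\eta(1)=\overline\eta(1)$. The subtle point is the interplay of class sizes: $\Cl_G(x)$ fibres over $\Cl_{\overline G}(\overline x)$, and over the identity coset the fibre is exactly $\Cl_G(x)\cap xN$, but over a general coset $\overline x^{\,\overline g}$ the fibre has size $|\C_{\overline G}(\overline x)|/|\,\text{something}\,|$ rather than $|\Cl_G(x)\cap xN|$ in general. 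To handle this cleanly I would instead decompose $\Cl_G(x)^+$ in $\FF G$ as a sum indexed by $\overline G/\C_{\overline G}(\overline x)$ of the $G$-conjugates of $(\Cl_G(x)\cap xN)^+$, i.e.\ $\Cl_G(x)^+=\sum_{\overline g} \bigl((\Cl_G(x)\cap xN)^+\bigr)^{g}$ where $\overline g$ runs over a transversal of $\C_{\overline G}(\overline x)$ in $\overline G$ and the conjugate lives in $\Z(\FF\langle N,x^g\rangle)$. Applying $\la_\ttheta$ (the central character of $\bl(\ttheta)$, which restricts compatibly) to this and using part~(a) to see every term contributes the same value $\la_{\ttheta_x}((\Cl_G(x)\cap xN)^+)$, while simultaneously tracking the $\overline\eta$-contribution, should produce the factorization. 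Concretely, one checks that $|\Cl_G(x)|/(\ttheta(1))^* $ times $\ttheta(x)^*$ splits as $\bigl[|\Cl_G(x)\cap xN|\,\ttheta(x)/\ttheta(1)\bigr]^*$ times $\bigl[|\Cl_{\overline G}(\overline x)|\,\overline\eta(\overline x)/\overline\eta(1)\bigr]^*\cdot\overline\eta(\overline x)^{-*}\cdot(\ldots)$ — the cleanest route is to verify the identity of \emph{integers} (before applying ${}^*$) that $|\Cl_G(x)| = |\Cl_G(x)\cap xN|\cdot|\Cl_{\overline G}(\overline x)|$, which holds because $\C_G(x)$ surjects onto $\C_{\overline G}(\overline x)$ precisely with kernel controlling the coset-fibre, and then combine with $(\ttheta\eta)(x)/(\ttheta\eta)(1) = \bigl(\ttheta(x)/\ttheta(1)\bigr)\bigl(\overline\eta(\overline x)/\overline\eta(1)\bigr)$.

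The main obstacle is the class-size identity $|\Cl_G(x)| = |\Cl_G(x)\cap xN|\cdot|\Cl_{\overline G}(\overline x)|$ and making sure it is actually correct in this generality: it amounts to showing the natural map $\Cl_G(x)\to\Cl_{\overline G}(\overline x)$ has all fibres of the same size $|\Cl_G(x)\cap xN|$, equivalently that $\C_G(x)N/N = \C_{\overline G}(\overline x)$ — but this last equality can \emph{fail} (there can be $g$ with $x^g\equiv x\bmod N$ yet $x^g\ne x$), so the correct statement is that $|\C_{\overline G}(\overline x)|/|\C_G(x)N/N| = |\{N\text{-classes in }xN\text{ fused to }\Cl_G(x)\}|$ and the bookkeeping must be done at the level of the decomposition $\Cl_G(x)^+=\sum (\ldots)^g$ over the transversal, not via a naive product formula. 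So I would organize the proof of~(b) entirely around that $\FF G$-element decomposition plus part~(a), rather than around counting, and apply ${}^*\circ\la_\ttheta$ once at the end; part~(a) is what guarantees every summand contributes equally and is therefore the real engine of the argument.
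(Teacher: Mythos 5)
Your proof is correct. For part (a) you use exactly the paper's argument: conjugation by $g$ carries $\langle N,x\rangle$ to $\langle N,y\rangle$, carries $\ttheta_x$ to $\ttheta_y$ (because $\ttheta$ is a character of the whole of $G$), and carries $\Cl_G(x)\cap xN$ bijectively onto $\Cl_G(y)\cap yN$, whence the two central-character values agree. For part (b) the paper simply cites Sp\"ath, \emph{J.\ Eur.\ Math.\ Soc.}\ \textbf{19} (2017), Lemma~2.2, so your self-contained computation is a genuinely different (and more informative) route; it is valid as written, via the fibration $\Cl_G(x)\to\Cl_{\overline G}(\overline x)$ and the observation that $\ttheta$ is constant on $\Cl_G(x)$ while $\eta$ is constant on each coset-fibre.

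One remark on the obstacle you flag at the end: your worry about the identity $|\Cl_G(x)|=|\Cl_G(x)\cap xN|\cdot|\Cl_{\overline G}(\overline x)|$ is unfounded, and your claimed equivalence with $\C_G(x)N/N=\C_{\overline G}(\overline x)$ is not right. The fibres of $\Cl_G(x)\to\Cl_{\overline G}(\overline x)$ \emph{always} have common size $|\Cl_G(x)\cap xN|$, because conjugation by $g$ maps the fibre over $\overline x$ bijectively onto the fibre over $\overline x^{\,\overline g}$ --- this is exactly the part (a) mechanism and needs no hypothesis on centralizers. (Equivalently, $\Cl_G(x)\cap xN=\Cl_{L_x}(x)$ where $L_x/N=\C_{\overline G}(\overline x)$, and $\C_G(x)\le L_x$ gives $|\Cl_{L_x}(x)|=|N|\,|\C_{\overline G}(\overline x)|/|\C_G(x)|=|\Cl_G(x)|/|\Cl_{\overline G}(\overline x)|$.) The failure of $\C_G(x)N/N=\C_{\overline G}(\overline x)$ only means the fibre over $\overline x$ is a union of several $N$-classes, not that the fibres have unequal sizes. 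So the ``naive'' product formula is available and your decomposition $\Cl_G(x)^+=\sum_{\overline g}\bigl((\Cl_G(x)\cap xN)^+\bigr)^g$, while correct, is not forced on you; either way the argument closes.
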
	

\begin{proof}
	Let $L_x/N=\C_{\overline G}(\overline x)$. Then $\Cl_{L_x}(x)=\Cl_G(x)\cap xN$ is a union of $\langle N,x\rangle$-conjugacy classes.
	Let $y\in \Cl_G(x)$ and let $g\in G$ such that $y=x^g$. Denote by $c_g$ the conjugation induced by $g$.
	Then $c_g$ gives a bijection $L_x\to L_{y}$ and from this we get a bijection $c_g\colon\Cl_{L_x}(x)\to\Cl_{L_{y}}(y)$.
	So $|\Cl_G(x)\cap xN|=|\Cl_G(y)\cap yN|$ and this implies (a).
	Part (b) is \cite[Lemma~2.2]{Sp17}.
\end{proof}

The following result generalizes \cite[Prop.~2.3]{NS14}.

\begin{prop}   \label{prop:induced-extensions}
	Let $N\unlhd G$, $H\le G$ and $M=H\cap N$. Let $\ttheta\in\Irr(G)$ and
	$\tvhi\in\Irr(H)$ with $\theta:=\Res^G_{N}(\ttheta)\in\Irr(N)$ and
	$\vhi:=\Res^H_M(\tvhi)\in\Irr(M)$.
	Assume that for every $N\le J\le G$ with $J/N$ cyclic, the block
	$\bl(\Res^{H}_{J\cap H}(\tvhi))^J$ is defined and equals
	$\bl(\Res^G_J (\ttheta))$. Then for every $\eta\in\Irr(G)$ with
	$N\le\ker(\eta)$ and $\eta'=\Res^G_H(\eta)\in\Irr(H)$, the block
	$\bl(\tvhi\eta')^G$ is defined and equals $\bl(\ttheta\eta)$.
\end{prop}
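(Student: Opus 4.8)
The plan is to reduce the general statement to the cyclic-quotient hypothesis by showing that the block of a product character $\tvhi\eta'$ is controlled by its restrictions to subgroups $J$ with $J/N$ cyclic, using the formula for central characters from Lemma~\ref{lem:central-char}(b). First I would recall that, since $N\le\ker(\eta)$, the character $\eta$ is the inflation of some $\overline\eta\in\Irr(\overline G)$ with $\overline G=G/N$; likewise $\eta'=\Res^G_H(\eta)$ is the inflation of $\Res^{\overline G}_{\overline H}(\overline\eta)$ where $\overline H=HN/N\cong H/M$. To prove that $\bl(\tvhi\eta')^G$ is defined and equals $\bl(\ttheta\eta)$, it suffices (by the standard criterion, e.g.\ \cite[Lemma~3.2]{KS15} or the defining property of induced blocks) to check equality of the two central characters $\Z(\FF G)\to\FF$ obtained by restriction, namely that $\la_{\tvhi\eta'}^{\,H}$ ``induces'' $\la_{\ttheta\eta}^{\,G}$, i.e.\ $\la_{\ttheta\eta}(\Cl_G(x)^+) = \la_{\tvhi\eta'}\bigl(\bigl(\Cl_G(x)\cap H\bigr)^+\bigr)$ for every $p$-regular $x\in G$ (in the appropriate sense that makes the induced block well defined).

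The key computation uses Lemma~\ref{lem:central-char}(b) on both sides. On the $G$-side, writing $\overline x=xN$, we get
\[
\la_{\ttheta\eta}(\Cl_G(x)^+)=\la_{\ttheta_x}\bigl((\Cl_G(x)\cap xN)^+\bigr)\,\la_{\overline\eta}(\Cl_{\overline G}(\overline x)^+),
\]
and on the $H$-side an analogous identity holds for $\tvhi\eta'$ relative to $M\unlhd H$. Now I would use the hypothesis: applying it to $J=\langle N,x\rangle$ (which has cyclic quotient $J/N$) gives $\bl(\Res^H_{J\cap H}(\tvhi))^J=\bl(\Res^G_J(\ttheta))$, and unwinding the meaning of this block identity via Lemma~\ref{lem:central-char}(a) yields that the ``local at $x$'' factors $\la_{\ttheta_x}$ and $\la_{\tvhi_{?}}$ match up after the $^*$-map. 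The remaining factors $\la_{\overline\eta}(\Cl_{\overline G}(\overline x)^+)$ on the $G$-side and its $\overline H$-analogue match because $\overline\eta$ and its restriction to $\overline H$ are genuine characters of the \emph{quotient} groups, and block induction for quotient groups (Brauer's first main theorem-type bookkeeping, or just directly the class-sum computation) relates $\Cl_{\overline G}(\overline x)$ to $\Cl_{\overline H}(\overline x)$ in the required way; here one also needs $\bl(\Res^G_{\langle N,x\rangle}(\ttheta))$ being the pullback of a block of $\langle N,x\rangle/N$, which is where the cyclic hypothesis is exactly tailored.

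I expect the main obstacle to be the bookkeeping that glues the ``local factor'' $\la_{\ttheta_x}((\Cl_G(x)\cap xN)^+)$ to the ``quotient factor'' $\la_{\overline\eta}(\Cl_{\overline G}(\overline x)^+)$ into a statement about the single class sum $\Cl_G(x)^+$ and its intersection with $H$; in particular one must verify that $\Cl_G(x)\cap H$ decomposes compatibly, i.e.\ that $(\Cl_G(x)\cap H)^+$ pushes forward correctly under the conjugation bijections $c_g$ of Lemma~\ref{lem:central-char}(a) so that the $H$-side product splits as $\la_{\tvhi_x}\cdot\la_{\Res_{\overline H}\overline\eta}$ with the \emph{same} two factors (after $^*$) as on the $G$-side. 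Once the two central-character formulas are shown to agree on all relevant class sums, the equality $\bl(\tvhi\eta')^G=\bl(\ttheta\eta)$ — including the assertion that the left-hand induced block is defined — follows formally, exactly as in the proof of \cite[Prop.~2.3]{NS14} which this result generalizes. A secondary technical point is checking that $\tvhi\eta'$ and $\ttheta\eta$ still restrict irreducibly to $M$ and $N$ respectively (immediate, since $\eta,\eta'$ are trivial on $N$, $M$), so that the setup of the block-induction criterion applies verbatim.
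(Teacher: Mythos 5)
Your overall strategy is the same as the paper's: reduce to the central-character identity $\la_{\ttheta\eta}(\Cl_G(x)^+)=\la_{\tvhi\eta'}((\Cl_G(x)\cap H)^+)$ for all $x\in G$ (note: all $x$, not only $p$-regular ones), factor both sides via Lemma~\ref{lem:central-char}(b), and feed in the hypothesis at $J=\langle N,x\rangle$ to match the ``local'' factors. The bookkeeping you flag --- decomposing $\Cl_G(x)\cap H$ into $H$-classes grouped according to the $\overline H$-classes inside $\Cl_{\overline G}(\overline x)\cap\overline H$, and using Lemma~\ref{lem:central-char}(a) to see that all the resulting local factors agree --- is exactly what the paper carries out to obtain the $H$-side factorization.

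There is, however, one step in your plan that fails as stated: the claim that the quotient factors $\la_{\overline\eta}(\Cl_{\overline G}(\overline x)^+)$ and $\la_{\overline\eta'}((\Cl_{\overline G}(\overline x)\cap\overline H)^+)$ ``match.'' They agree only when $\Cl_{\overline G}(\overline x)\subseteq\overline H$; if the $\overline G$-class of $\overline x$ is not contained in $\overline H$, these two quantities are different in general, and no bookkeeping with the quotient characters will reconcile them. In that case the identity holds for a different reason, and this is the entire second half of the paper's proof: pick $y\in\Cl_G(x)$ with $yN\cap H=\emptyset$; the hypothesis applied to $\langle N,y\rangle$ forces $\la_{\ttheta_y}((\Cl_G(y)\cap yN)^+)=\la_{\tvhi_{\,\cdot}}(\emptyset^+)=0$, whence $\la_{\ttheta\eta}(\Cl_G(x)^+)=0$ by Lemma~\ref{lem:central-char}(b), and then Lemma~\ref{lem:central-char}(a) together with the hypothesis shows every local factor $\la_{\tvhi_{y_i'}}((\Cl_G(x)\cap y_i'M)^+)$ on the $H$-side vanishes as well, so both sides are zero. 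You need to build this dichotomy into the argument; without it the ``gluing'' you describe proves the identity only for those classes whose image lies entirely in $NH/N$.
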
	

\begin{proof}
	It is sufficient to show 
	\begin{equation}\label{equ:central-char-clifford}
		\la_{\ttheta\eta}(\Cl_G(x)^+)=\la_{\tvhi\eta'}((\Cl_G(x)\cap H)^+) 
		\addtocounter{thm}{1}\tag{\thethm}
	\end{equation}
	for any $x\in G$. Denote by $\overline\eta$ (resp. $\overline\eta'$) the
	character of $\overline G=G/N$ (resp. $\overline H=H/M$) corresponding to
	$\eta$ (resp. $\eta'$) so that $\overline\eta'=\Res^{\overline G}_{\overline H}(\overline\eta)$.
	
	Fix $x\in G$. By Lemma~\ref{lem:central-char}(b), 
	\begin{equation}\label{equ:cen-char-ttheta-eta}
		\la_{\ttheta\eta}(\Cl_G(x)^+)=\la_{\ttheta_x}((\Cl_G(x)\cap xN)^+)\la_{\overline \eta}(\Cl_{\overline G}(\overline x)^+).
		\addtocounter{thm}{1}\tag{\thethm}
	\end{equation}
	Write $\Cl_{\overline G}(\overline x)\cap\overline H=\Cl_{\overline H}(\overline y_1')\sqcup\cdots\sqcup\Cl_{\overline H}(\overline y_s')$ for $y_i'\in H$.
	Then $y_i'N\cap H=y_i'M$ for $1\le i\le s$ and \[\Cl_G(x)\cap H=\coprod_{i=1}^s\coprod_{j=1}^{a_i}\Cl_H(x_{ij}')\]
	where $x_{ij}'\in H$ satisfy $\overline x_{ij}'\in \Cl_{\overline H}(\overline y_i')$.
	For $1\le j\le a_i$, the set $\{ gM\mid g\in\Cl_H(x_{ij}')\}$ consists of the conjugacy class in $H/M$ containing $y_i'M$ and thus we can choose $x_{ij}'$ satisfying $x_{ij}'M=y_i'M$.
	In addition, $\Cl_G(x)\cap y_i'M\subseteq\coprod\limits_{j=1}^{a_i}\Cl_H(x_{ij}')$.
	
	By assumption,
	\[\la_{\ttheta_{y_{i}'}}((\Cl_G(x)\cap y_i'N)^+)=\la_{\tvhi_{y_{i}'}}((\Cl_G(x)\cap y_i'M)^+),\] where $\tvhi_{y_{i}'}=\Res^H_{\langle M,y_{i}'\rangle}(\tvhi)$. 
	Thus according to Lemma~\ref{lem:central-char}(a), 
	\[\la_{\tvhi_{y_{i}'}}((\Cl_G(x)\cap y_i'M)^+)=\la_{\tvhi_{y_k'}}((\Cl_G(x)\cap y_k'M)^+)\] for every $1\le i,k\le s$.
	Now
	\[\la_{\tvhi\eta'}(\Cl_H(x_{ij}')^+)=\la_{\tvhi_{x_{ij}'}}((\Cl_H(x_{ij}')\cap y_i'M)^+)\la_{\overline\eta'}(\Cl_{\overline H}(\overline y_i')^+)\]
	and so
	\[\la_{\tvhi\eta'}\bigg(\bigg(\coprod\limits_{j=1}^{a_i}\Cl_H(x_{ij}')\bigg)^+\bigg)=\la_{\tvhi_{y_{i}'}}((\Cl_G(x)\cap y_i'M)^+)\la_{\overline\eta'}(\Cl_{\overline H}(\overline y_i')^+).\]
	Therefore,
	\begin{equation}\label{equ:cen-char-tvhi-eta}
		\la_{\tvhi\eta'}((\Cl(x)\cap H)^+)=\la_{\tvhi_{y_{1}'}}((\Cl_G(x)\cap y_1'M)^+)\la_{\overline\eta'}((\Cl_{\overline G}(\overline x)\cap\overline H)^+).
		\addtocounter{thm}{1}\tag{\thethm}
	\end{equation}
	
	First we assume that $\{yN\mid y\in\Cl_G(x)\}\subseteq NH/N$, that is, $\Cl_{\overline G}(\overline x)\subseteq\overline H$. 
	Without loss of generality, we assume that $y_1'\in H\cap xN$ (that is, $xN=y_1'N$).
	By assumption, \[\la_{\ttheta_x}((\Cl_G(x)\cap xN)^+)=\la_{\tvhi_{y_1'}}((\Cl_G(x)\cap y_1'M)^+)\]  and thus we conclude from Equations (\ref{equ:cen-char-ttheta-eta}) and (\ref{equ:cen-char-tvhi-eta}) that (\ref{equ:central-char-clifford}) holds.
	
	Now assume that $\{yN\mid y\in\Cl_G(x)\}\nsubseteq NH/N$.
	Then there exists $y\in G\setminus NH$ with $yN\notin NH/N$. In particular, $\Cl_G(x)\cap yN\cap H=\emptyset$.
	By assumption, \[\la_{\ttheta_y}((\Cl_G(y)\cap yN)^+)=\la_{\Res^H_{\langle N,y\rangle\cap H}(\tvhi)}((\Cl_G(x)\cap yN\cap H)^+)=0.\]
	According to Lemma~\ref{lem:central-char}(b), $\la_{\ttheta\eta}(\Cl_G(x)^+)=\la_{\ttheta\eta}(\Cl_G(y)^+)=0$ and by Lemma~\ref{lem:central-char}(a), 
	\[\la_{\ttheta_g}((\Cl_G(x)\cap gN)^+)=\la_{\ttheta_y}((\Cl_G(x)\cap yN)^+)=0\] for every $g\in\Cl_G(y)$.
	Therefore, by assumption,
	\[\la_{\tvhi_{y_{i}'}}((\Cl_G(x)\cap y_i'M)^+)=\la_{\ttheta_{y_i'}}((\Cl_G(x)\cap y_i'N)^+)=0\] for $1\le i\le s$.
	Then (\ref{equ:cen-char-tvhi-eta}) implies that $\la_{\tvhi\eta'}((\Cl_G(x)\cap H)^+)=0$ and this completes the proof.
\end{proof}

\subsection{Dade's ramification group}

Let $N\unlhd G$ and $b$ be a block of $N$.
We will make use of \emph{Dade's ramification group} $G[b]$, which was introduced in \cite{Da73}.
We recall the reformulation of the definition from \cite{Mu13}; the group $G[b]$ is generated by $N$ and all the elements $g\in G$ with $\la_{\widetilde b^{(g)}}(\Cl_{\langle N, g\rangle}(g)^+)\ne0$ for some block $\widetilde b^{(g)}$ of $\langle N, g\rangle$ covering $b$.
In particular, $G[b]\unlhd G_b$ and $G[b]\le N\C_G(D)$ where $D$ is a defect group of $b$.
See \cite{KS15,Mu13,Sp17} for further properties of $G[b]$.

\begin{lem}\label{lem:induced-blocks2}
	Let $N\le K\unlhd G$ and $H\le G$ with $N\unlhd G$ and $M=N\cap H$.
	Suppose that both $G/K$ and $K/N$ are abelian, and there exist characters $\ttheta\in\Irr(G)$ and $\tvhi\in\Irr(H)$ with \[\theta:=\Res^G_{N}(\ttheta)\in\Irr(N)\ \ \textrm{and}\ \ \vhi:=\Res^H_M(\tvhi)\in\Irr(M).\]
	Assume that a defect group $D$ of $\bl(\vhi)$ satisfies that $\C_G(D)\le H$.
	If \[\bl(\Res^G_K(\ttheta))=\bl(\Res^H_{H\cap K}(\tvhi))^K,\] then there exists an extension $\ttheta'$ of $\Res^G_K(\ttheta)$ to $G$ such that \[\bl(\Res^G_J(\ttheta'))=\bl(\Res^H_{J\cap H}(\tvhi))^J\] for every $N\le J\le G$.
\end{lem}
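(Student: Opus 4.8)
The plan is to first establish the required block equality for all $N\le J\le G$ with $J/N$ \emph{cyclic}, and then to deduce the general case by applying Proposition~\ref{prop:induced-extensions} inside each such $J$. Write $\psi:=\Res^G_K(\ttheta)\in\Irr(K)$ and $\psi':=\Res^H_{H\cap K}(\tvhi)\in\Irr(H\cap K)$, so that $\Res^K_N(\psi)=\theta$, $\Res^{H\cap K}_M(\psi')=\vhi$ and $\bl(\psi')^K=\bl(\psi)$. Since $G/K$ is abelian, $\Irr(G/K)=\Lin(G/K)$, and the extensions of $\psi$ to $G$ are precisely the irreducible characters $\ttheta\mu$ with $\mu\in\Irr(G/K)$ inflated to $G$; the task is to choose $\mu$ correctly.

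\emph{Reduction to cyclic quotients.} Suppose $\ttheta':=\ttheta\mu$ has the property that $\bl(\Res^H_{J'\cap H}(\tvhi))^{J'}$ is defined and equals $\bl(\Res^G_{J'}(\ttheta'))$ for every $N\le J'\le G$ with $J'/N$ cyclic. Fix an arbitrary $N\le J\le G$ and apply Proposition~\ref{prop:induced-extensions} with $(G,H,N,M)$ replaced by $(J,\,J\cap H,\,N,\,M)$ (note that $N\cap(J\cap H)=M$ since $N\le J$) and $\ttheta,\tvhi$ replaced by $\Res^G_J(\ttheta')\in\Irr(J)$ and $\Res^H_{J\cap H}(\tvhi)\in\Irr(J\cap H)$: its hypothesis is exactly the displayed property restricted to the subgroups of $J$, and taking $\eta=1_J$ in its conclusion yields that $\bl(\Res^H_{J\cap H}(\tvhi))^J$ is defined and equals $\bl(\Res^G_J(\ttheta'))$. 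Hence it suffices to produce $\mu$ handling the cyclic quotients.

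\emph{Definedness and the subgroups contained in $K$.} For every $N\le J\le G$, the block $\bl(\Res^H_{J\cap H}(\tvhi))$ covers the $(J\cap H)$-stable block $\bl(\vhi)$ of $M$, hence has a defect group $Q$ with $Q\cap M$ a defect group of $\bl(\vhi)$; after conjugating we may take $Q\cap M=D$, so $\C_J(Q)\le\C_G(Q)\le\C_G(D)\le H$, i.e.\ $\C_J(Q)\le J\cap H$, and therefore $\bl(\Res^H_{J\cap H}(\tvhi))^J$ is defined. If moreover $J\le K$, then $J\unlhd K$ (as $K/N$ is abelian and $N\le J$), so Lemma~\ref{lemma:blockinductions1}, applied inside $K$ to $\psi,\psi'$ using $\bl(\psi')^K=\bl(\psi)$, gives
\[\bl(\Res^H_{J\cap H}(\tvhi))^J=\bl(\Res^K_J(\psi))=\bl(\Res^G_J(\ttheta))=\bl(\Res^G_J(\ttheta\mu)),\]
the last equality because $\mu$ is trivial on $J\le K$. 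Since this holds for any $\mu$, the cyclic quotients inside $K$ impose no condition on $\mu$.

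\emph{The remaining cyclic quotients, and the main obstacle.} It remains to treat $J=\langle N,g\rangle$ with $g\notin K$. Put $J_0:=J\cap K=\langle N,g^{j_0}\rangle$, where $j_0$ is the order of $gK$ in $G/K$; then $J_0\unlhd J$, $J/J_0$ is cyclic, and by the previous step $\bl(\Res^G_{J_0}(\ttheta))=\bl(\Res^H_{J_0\cap H}(\tvhi))^{J_0}=:\mathfrak{b}_0$, a $J$-stable block of $J_0$. Both $\bl(\Res^G_J(\ttheta\mu))$ and $\bl(\Res^H_{J\cap H}(\tvhi))^J$ cover $\mathfrak{b}_0$ (the former since $\Res^G_{J_0}(\ttheta)\in\Irr(J_0)$, the latter by \cite[Lemma~3.2]{KS15}). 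As $J/N$ is abelian, $\Cl_J(h)=\Cl_J(h)\cap hN\subseteq hN$ and $\la_{\Res^G_J(\ttheta\mu)}(\Cl_J(h)^+)=\mu(hK)^{*}\,\la_{\Res^G_J(\ttheta)}(\Cl_J(h)^+)$ for $h\in J$, so, comparing central characters on class sums, the equality $\bl(\Res^G_J(\ttheta\mu))=\bl(\Res^H_{J\cap H}(\tvhi))^J$ is equivalent to
\[\la_{\bl(\Res^H_{J\cap H}(\tvhi))^J}(\Cl_J(h)^+)=\mu(hK)^{*}\,\la_{\Res^G_J(\ttheta)}(\Cl_J(h)^+)\qquad(h\in J),\]
which is automatic for $h\in J_0$ (both blocks cover $\mathfrak{b}_0$ and $\mu(hK)=1$) and, for $h\notin J_0$, pins down the scalar $\mu(gK)^{*}$. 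The crux — and the main difficulty — is that these conditions, as $g$ ranges over $G\setminus K$ and $h$ over $J$, are mutually consistent and are solved by a single $\mu\in\Irr(G/K)$. Here $\C_G(D)\le H$ re-enters: by \cite[Lemma~3.2]{KS15}, $\bl(\psi)=\bl(\psi')^K$ covers $\bl(\vhi)^N$, which is defined because $\C_N(D)=\C_G(D)\cap N\le M$; as $\bl(\psi)$ covers only the $G$-stable block $\bl(\theta)$, we get $\bl(\vhi)^N=\bl(\theta)$, so some defect group $D_\theta$ of $\bl(\theta)$ contains $D$ (up to conjugacy) and Dade's ramification group satisfies $G[\bl(\theta)]\le N\C_G(D_\theta)\le NH$. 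This forces $\la_{\Res^G_J(\ttheta)}(\Cl_J(h)^+)=0$ whenever $hN\notin NH/N$ — where the left-hand side above vanishes too, so no constraint on $\mu$ arises — while on the cosets meeting $H$ one recognises $\mu(gK)^{*}$, via Lemma~\ref{lem:central-char}(a) and the compatibility over $K$ already proved, as the reduction mod $I$ of a root-of-unity value of a well-defined character of $G/K$. Since $G/K$ is abelian this character lies in $\Irr(G/K)=\Lin(G/K)$; taking $\mu$ to be its inverse gives the desired extension $\ttheta'=\ttheta\mu$. Thus the bulk of the work is this last consistency check — that the local obstruction to block-compatibility on cyclic quotients is a genuine linear character of $G/K$, for which both the abelianness of $G/K$ and $\C_G(D)\le H$ are essential — while everything else reduces to bookkeeping with induced blocks and the central-character formalism above.
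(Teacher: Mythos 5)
Your setup is sound and parallels the paper's in several places: the reduction from arbitrary $J$ to cyclic quotients via Proposition~\ref{prop:induced-extensions}, the definedness of the induced blocks from $\C_G(D)\le H$, the case $N\le J\le K$ via Lemma~\ref{lemma:blockinductions1}, and the vanishing of both central-character values on cosets outside $NH$ using $G[\bl(\theta)]\le N\C_G(D_\theta)\le NH$. But the proof has a genuine gap exactly where you yourself locate ``the crux'': you never prove that the scalars $\mu(gK)^*$ pinned down coset by coset are mutually consistent, i.e.\ that they are the reductions of the values of a single \emph{multiplicative} character of $G/K$. For a fixed cyclic $J=\langle N,g\rangle$, \cite[Thm.~B]{KS15} does give some $\ze_J\in\Lin(J/N)$ correcting the block, and your argument shows $\ze_J$ can be taken trivial on $J\cap K$ up to $\Lin_p$; but for two elements $g_1,g_2$ with $\langle g_1,g_2\rangle K/K$ non-cyclic there is no a priori reason that $\ze_{\langle N,g_1g_2\rangle}(g_1g_2)$ equals $\ze_{\langle N,g_1\rangle}(g_1)\ze_{\langle N,g_2\rangle}(g_2)$ modulo the irrelevant $p$-power roots of unity. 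The sentence ``one recognises $\mu(gK)^*$ \dots\ as the reduction mod $I$ of a root-of-unity value of a well-defined character of $G/K$'' asserts precisely this gluing without an argument, and it is the entire content of the lemma.

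The paper closes this gap with machinery you do not invoke: it first produces, via \cite[Thm.~C]{KS15}, a single character $\widetilde\phi\in\Irr(G[b])$ that is block-compatible with $\tvhi$ on \emph{all} intermediate subgroups of Dade's ramification group $G[b]$ (not one cyclic subgroup at a time); it then applies \cite[Lemma~3.2]{CS15} -- which is exactly an extension-with-prescribed-blocks statement for abelian quotients -- to adjust $\Res^G_K(\ttheta)$ to an extension $\ttheta'$ agreeing blockwise with $\widetilde\phi$ on $G[b]$; and finally it passes from $G[b]$ to all of $G$ using \cite[Thm.~3.5]{Mu13} and \cite[Lemma~2.3]{KS15} (uniqueness of the covering block above $J\cap G[b]$). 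To repair your proof you would need either to reproduce the content of these results or to cite them at the consistency step; as written, the multiplicativity of your putative character of $G/K$ is unestablished.
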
	

\begin{proof}
	Let $b=\bl(\theta)$ and $b'=\bl(\vhi)$.
	Since a defect group $D$ of $b'$ satisfies that $\C_G(D)\le H$, for every $N\le J\le G$ and every block $\tilde b'$ of $J\cap H$ covering $b'$, some defect group $\tD$ of $\tilde b'$ satisfies $D\le\tD$ and hence $\C_J(\tD)\le\C_J(D)\le J\cap H$; in particular, $\tilde b'^J$ is defined.
	By \cite[Prop.~2.5(a)]{Sp17}, \[G[b]\le N\C_G(\tD)\le N\C_G(D)\le NH.\]
	
	According to Lemma~\ref{lemma:blockinductions1}, \[\bl(\Res^G_J(\ttheta))=\bl(\Res^H_{J\cap H}(\tvhi))^J\] for every $N\le J\le K$.
	By \cite[Thm.~C]{KS15}, there exists $\widetilde \phi\in\Irr(G[b])$ such that $\Res^{G[b]}_N(\widetilde \phi)$ is irreducible and 
\[\bl(\Res^{G[b]}_{J}(\widetilde\phi))=\bl(\Res^{H}_{J\cap H}(\tvhi))^{J}\] for every $N\le J\le G[b]$.
	Thus \[bl(\Res^G_J(\ttheta))=\bl(\Res^{G[b]}_{J}(\widetilde\phi))\] for every $N\le J\le K[b]$, and then we conclude from \cite[Lemma~3.2]{CS15} that there exists an extension $\ttheta'$ of $\Res^G_K(\ttheta)$ to $G$ such that \[\bl(\Res^G_J(\ttheta'))=\bl(\Res^{G[b]}_{J}(\widetilde\phi))\] for every
	$N\le J\le G[b]$.
	Therefore, \[\bl(\Res^G_J(\ttheta'))=\bl(\Res^{H}_{J\cap H}(\tvhi))^{J}\] for every $N\le J\le G[b]$.
	
	Now let $N\le J\le G$ and $J'=J\cap G[b]$. Following \cite[Thm.~3.5]{Mu13}, \[\bl(\Res^G_J(\ttheta'))=\bl(\Res^G_{J'}(\ttheta'))^J\] is the unique block of $J$ covering $\bl(\Res^G_{J'}(\ttheta'))$.
	Since $\bl(\Res^G_{J'}(\ttheta'))=\bl(\Res^{H}_{J'\cap H}(\tvhi))^{J}$, we have \[\bl(\Res^G_{J}(\ttheta'))=\bl(\Res^{H}_{J\cap H}(\tvhi))^{J}\] by \cite[Lemma~2.3]{KS15}.
\end{proof}

%%%%%%%%%%%%%%%%%%%%%%%%%%%%%%%%%%%%%%%%%%%%%%%%%%%%%%%%%%%%%%%%%%%%%%%%%
\section{A new relation between character triples}\label{sec:new-relation}

Let $G$ be a finite group, $N\unlhd G$ and $\theta\in\Irr(N)$.
We denote by $\Char(G\mid\theta)$ the set of characters of $G$ whose
constituents are in $\Irr(G\mid\theta)$. If $\theta$ is $G$-invariant, then
$(G,N,\theta)$ is called a \emph{character triple}.

\subsection{Character triples and projective representations}

A \emph{projective $\CC$-representation} of $G$ is a
function $\cP\colon G\to\GL_n(\CC)$ such that for every $g,h\in G$ there exists
some $\al(g,h)\in\CC^\ti$ satisfying $\cP(g)\cP(h)=\al(g,h)\cP(gh)$.
Then $\al$ is a $2$-cocycle and called the \emph{factor set} of $\cP$.
For a subset $C\subseteq G$, we define $\cP(C^+)=\sum\limits_{g\in C}\cP(g)$.

Let $(G,N,\theta)$ be a character triple. We say that a projective
representation $\cP\colon G\to\GL_{\theta(1)}(\CC)$ is \emph{associated with}
$(G,N,\theta)$ if $\cP\rceil_N$ is a linear representation of $N$ affording
$\theta$, and $\cP(ng)=\cP(n)\cP(g)$ and $\cP(gn)=\cP(g)\cP(n)$ for every
$n\in N$ and $g\in G$.
The factor set $\al$ of $\cP$ satisfies that $\al(1,1)=\al(g,n)=\al(n,g)=1$ for
every $n\in N$ and $g\in G$, and thus it can be regarded as a $2$-cocycle
of $G/N$. According to \cite[11.2]{Is06} there exists a projective
representation to every character triple.

\begin{rmk}
	Let $G=NH$ with $N\unlhd G$, and let $M=N\cap H$ and $(H,M,\theta)$ be a character triple.
	Suppose that $\cP\colon H\to\GL_m(\CC)$ is a projective $\CC$-representation of~$H$ associated with $(H,M,\theta)$.
	Let $n_1,\ldots, n_s\in N$ (with $s=|G|/|H|$) be representatives of the $M$-cosets in $N$.
	Then $n_1,\ldots, n_s$ are representatives of the $H$-cosets in $G$.
	For $1\le i,j\le s$, let $\widehat\cP_{i,j}\colon G\to \GL_{m}(\CC)$ be the map given by 
	\[\widehat\cP_{i,j}(x) =
	\begin{cases}
		\cP(n_i^{-1}xn_j)  & \ \mbox{if $n_i^{-1}xn_j\in H$,} \\
		0  &  \ \mbox{otherwise.}
	\end{cases}   \] 
	As in \cite[p.~712]{NS14}, we define the \emph{induced projective $\CC$-representation of~$\cP$ to~$G$} (with respect to $n_1,\ldots, n_s$) to be
	\[\Ind^G_{H,N}(\cP)\colon G\to\GL_{ms}(\CC),\ x\mapsto
	\left (
	\begin{matrix}
		\widehat\cP_{1,1}(x) & \cdots & \widehat\cP_{1,s}(x) \\
		\vdots &  & \vdots \\
		\widehat\cP_{s,1}(x) & \cdots & \widehat\cP_{s,s}(x)
	\end{matrix}
	\right  ).\]
	Straightforward computations show that $\Ind^G_{H,N}(\cP)$ is a projective $\CC$-representation of $G$ with factor set $\widehat \al$ such that for every $x,y\in G$, $\widehat \al(x,y)=\al(x_1,y_1)$ where $x_1,y_1\in H$ satisfy $x=x_1 n_x$ and $y=y_1n_y$ with $n_x,n_y\in N$.
	In other words, $\widehat\al$ and $\al$ coincide via the isomorphism $G/N\cong H/M$.
	If $\Ind_M^N(\theta)\in\Irr(N)$, then $\Ind^G_{H,N}(\cP)$ is a projective $\CC$-representation associated with the character triple $(G,N,\Ind_M^N(\theta))$. 
	
	In particular, if $\cP$ is a linear $\CC$-representation of~$H$, then $\Ind^G_{H,N}(\cP)$ is the induced linear $\CC$-representation of~$\cP$ to~$G$ (see for example \cite[p.~170]{NT89}).
\end{rmk}

\subsection{A generalization of central and block isomorphisms}

Now we give a generalization of central and block isomorphisms from
\cite{NS14,Sp18}.

\begin{defn}\label{def-1-char-triple} 
Suppose that $(G,N,\theta)$ and $(H,M,\vhi)$ are two character triples. 
We write
\[(G,N,\theta)\geqslant_{(g)}(H,M,\vhi)\]
if the following conditions are satisfied.
\begin{enumerate}[\rm(1)]
\item $H\le G$ and $N\cap H=M$.
\item There exists projective $\CC$-representations $\cP$ and $\cP'$ associated
 with $(G,N,\theta)$ and $(H,M,\vhi)$ respectively, with factor sets $\al$ and
 $\al'$ such that $\al\rceil_{H\ti H}=\al'$.
\end{enumerate}
In this situation, we say that \emph{$(G,N,\theta)\geqslant_{(g)}(H,M,\vhi)$ is
given by $(\cP,\cP')$}.
\end{defn}

\begin{thm}   \label{thm:iso-char-triple}	
 Suppose that $(G,N,\theta)\geqslant_{(g)}(H,M,\vhi)$ is given by $(\cP,\cP')$.
 Let~$\al$ be the factor set of~$\cP$. Then for every intermediate subgroup
 $N\le J\le G$, the linear map
 \[\sigma_J\colon\Char(J\mid \theta)\to\Char(J\cap H\mid\vhi)\]
 given by
 \[\trace(\cQ\otimes\cP\rceil_J)\mapsto
   \trace(\cQ\rceil_{J\cap H}\otimes\cP'\rceil_{J\cap H}),\]
 for any projective representation $\cQ$ of $J/N (\ge(J\cap H)/M)$ whose factor
 set is inverse to the one of $\cP\rceil_{J}$, is a well-defined map satisfying
 that for any $N\le K\le J\le G$ and any $\chi\in\Char(J\mid\theta)$,
 \begin{enumerate}[\rm(a)]
	\item $\Res^{J\cap H}_{K\cap H}(\sigma_{J}(\chi))
     =\sigma_{K}(\Res^{J}_{K}\chi)$, 
	\item $\sigma_{J}(\chi\beta)=\sigma_{J}(\chi)\Res^J_{J\cap H}(\beta)$ for
     every $\beta\in\Char(J/N)$, and
	\item $\sigma_{J}(\chi)^h=\sigma_{J^h}(\chi^h)$ for every $h\in H$.
	\item If $J=N(J\cap H)$, then $\sigma_J$ is a bijection.
 \end{enumerate}
 In particular, $\sigma_J$ induces a strong isomorphism between the character
 triples $(NH,N,\theta)$ and $(H,M,\vhi)$ in the sense of
 \cite[Problem~11.13]{Is06}.
 \begin{enumerate}[\rm(e)]
	\item[\rm(e)] If $J/N$ is abelian and $\theta$ extends to $J$, then
     $\sigma_J$ gives a surjection $\Irr(J\mid\theta)\to\Irr(J\cap H\mid\vhi)$.
 \end{enumerate}
\end{thm}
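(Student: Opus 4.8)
The plan is to construct the map $\sigma_J$ explicitly and then verify the four compatibility properties (a)--(d), together with the final two auxiliary claims, essentially by unwinding the definitions of traces of tensor products of projective representations. First I would fix a projective representation $\cQ$ of $J/N$ with factor set $\alpha\rceil_J^{-1}$; then $\cQ\otimes\cP\rceil_J$ is an ordinary representation of $J$, and by the standard theory of character triples (cf. \cite[Chapter~11]{Is06}) every $\chi\in\Char(J\mid\theta)$ has the form $\trace(\cQ\otimes\cP\rceil_J)$ for a suitable ordinary representation $\cQ$ of $J/N$; the key point is that such a $\cQ$ is determined by $\chi$ up to similarity, so $\trace(\cQ\rceil_{J\cap H}\otimes\cP'\rceil_{J\cap H})$ does not depend on the choice. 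To see well-definedness I would argue that the condition $\alpha\rceil_{H\ti H}=\alpha'$ (which via the identifications $G/N\cong$ (image) and $(J\cap H)/M\hookrightarrow J/N$ gives $\alpha\rceil_J\rceil_{J\cap H}=\alpha'\rceil_{J\cap H}$) forces $\cQ\rceil_{J\cap H}$ to have factor set inverse to that of $\cP'\rceil_{J\cap H}$, so $\cQ\rceil_{J\cap H}\otimes\cP'\rceil_{J\cap H}$ is genuinely an ordinary representation of $J\cap H$, and its character lies in $\Char(J\cap H\mid\vhi)$ because $\cP'\rceil_M$ affords $\vhi$; then if $\cQ_1\otimes\cP\rceil_J\sim\cQ_2\otimes\cP\rceil_J$ one deduces $\cQ_1\sim\cQ_2$ as projective representations of $J/N$ (their quotient is a genuine representation of $J/N$ that restricts to the identity on... more precisely one uses that $\cP\rceil_J$ is irreducible on $N$), hence $\cQ_1\rceil_{J\cap H}\sim\cQ_2\rceil_{J\cap H}$ and the images agree.

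Next I would check (a), (b), (c), which are all immediate from the formula: (a) is just $\Res^{J\cap H}_{K\cap H}\trace(\cQ\rceil_{J\cap H}\otimes\cP'\rceil_{J\cap H})=\trace(\cQ\rceil_{K\cap H}\otimes\cP'\rceil_{K\cap H})$, using that $\cQ\rceil_K$ is an admissible choice for $\Res^J_K\chi$; (b) follows since multiplying $\chi$ by $\beta\in\Char(J/N)$ corresponds to replacing $\cQ$ by $\cQ\otimes\cR$ where $\cR$ affords $\beta$, and then restricting to $J\cap H$ commutes with this; (c) follows by transporting everything by $h\in H$ and using that $\cP$ and $\cP'$ are associated with character triples, so conjugating $\cP$ by $h$ gives a representation associated with $(G^h,N,\theta^h)$ differing from the canonical one by the obvious adjustments, and similarly for $\cP'$ since $h\in H$. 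For (d), when $J=N(J\cap H)$, I would use that the inclusion $J\cap H\hookrightarrow J$ induces an isomorphism $(J\cap H)/M\xrightarrow{\sim}J/N$, so every projective representation of $(J\cap H)/M$ (in particular with factor set inverse to that of $\cP'\rceil_{J\cap H}$) extends uniquely to one of $J/N$, giving the inverse map; the ``in particular'' statement about strong isomorphism of $(NH,N,\theta)$ and $(H,M,\vhi)$ then follows from (a)--(d) together with the properties of strong isomorphisms as listed in \cite[Problem~11.13]{Is06}, since $\sigma_J$ for $J\le NH$ exhausts the intermediate subgroups on the $NH$ side.

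For (e), I would argue as follows: if $J/N$ is abelian and $\theta$ extends to $J$, then $\cP\rceil_J$ can be chosen to be an ordinary representation of $J$ (it is a genuine extension of $\theta$), so we may take the factor set of $\cP\rceil_J$ to be trivial; then $\Irr(J\mid\theta)=\{\,\theta_0\lambda\mid\lambda\in\Irr(J/N)\,\}$ where $\theta_0$ is an extension of $\theta$, i.e.\ $\cQ$ ranges over the (ordinary, $1$-dimensional) characters of $J/N$. Restricting to $J\cap H$: the factor set of $\cP'\rceil_{J\cap H}$ is then also trivial (it equals $\alpha\rceil_J$ restricted), so $\cP'\rceil_{J\cap H}$ affords an extension $\vhi_0$ of $\vhi$, and $\sigma_J(\theta_0\lambda)=\vhi_0\cdot(\lambda\rceil_{J\cap H})$. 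Since $(J\cap H)/M$ need not surject onto $J/N$ in general, but here we only need surjectivity of the map $\Irr(J\mid\theta)\to\Irr(J\cap H\mid\vhi)$, which amounts to surjectivity of $\Irr(J/N)\to\Irr((J\cap H)/M)$, $\lambda\mapsto\lambda\rceil_{J\cap H}$ — and this holds because $J/N$ is abelian (restriction of linear characters from an abelian group to a subgroup is surjective). Hence every $\Irr$-constituent $\vhi_0\mu$ of $\Char(J\cap H\mid\vhi)$ is hit.

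The main obstacle I expect is the well-definedness in part (d)'s ``surjective'' versus general ``linear map'' dichotomy and, more substantially, pinning down precisely the compatibility of factor sets under the two non-obvious identifications in play (the isomorphism $G/N\cong$ image-group used to regard $\alpha$ as a cocycle, and the embedding $(J\cap H)/M\hookrightarrow J/N$), making sure $\alpha\rceil_{H\ti H}=\alpha'$ really does give $\cQ\rceil_{J\cap H}$ the correct factor set for all intermediate $J$ and not merely for $J=G$; this is where I would be most careful, and where a clean lemma on restriction of $2$-cocycles along subgroup inclusions would streamline the argument. Everything else is a routine trace computation.
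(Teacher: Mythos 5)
Your proposal is correct and follows essentially the same route as the paper, which simply cites the standard arguments of Sp\"ath (Theorem~2.2 and Corollary~2.4 of \cite{Sp18}) for (a)--(d) and, for (e), the observation that every irreducible projective representation of $(J\cap H)/M$ with the relevant factor set is the restriction of one of $J/N$ when $J/N$ is abelian and $\theta$ extends to $J$ --- exactly the content of your Gallagher-plus-restriction-of-linear-characters argument. The only cosmetic caveat is that in (e) you should not literally ``choose $\cP\rceil_J$ ordinary'' (since $\sigma_J$ is defined by the fixed pair $(\cP,\cP')$); instead note that $\sigma_J(\theta_0)$ is an extension of $\vhi$ by part (a), and then conclude via part (b), which is what your computation amounts to anyway.
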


\begin{proof}
(a)--(d) follow by similar arguments as in the proof of Theorem~2.2 and Corollary~2.4 of
\cite{Sp18}.
Property (e) follows by the fact that every irreducible projective
representation of $(J\cap H)/M$ whose factor set is inverse to the one of
$\cP\rceil_{J\cap H}$ is a restriction of some irreducible projective
representation of $J/N$ whose factor set is inverse to the one of
$\cP\rceil_{J}$, when $J/N$ is abelian and $\theta$ extends to $J$.
\end{proof}

\begin{lem}   \label{lem:iso-char-triples1}
 Suppose that $(G,N,\theta)\geqslant_{(g)}(H,M,\vhi)$ is given by $(\cP,\cP')$.
 Let $\sigma$ be the linear maps defined by $(\cP,\cP')$ as in
 Theorem~\ref{thm:iso-char-triple}. We assume further that there exists a
 normal subgroup $G_0$ of $G$ such that $N\le G_0$, $G=G_0H$ and $G_0/N$ is
 abelian. Suppose that $\psi_0$ is an extension of $\theta$ to $G_0$.
 Then for every intermediate subgroup $G_0\le J\le G_{\psi_0}$, $\sigma_J$
 gives a bijection \[\Irr(J\mid\psi_0)\to \Irr(J\cap H\mid\sigma_{G_0}(\psi_0)).\]
\end{lem}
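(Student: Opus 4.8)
The plan is to promote the given relation one step up along $N\le G_0$, by showing that $(G_{\psi_0},G_0,\psi_0)\geqslant_{(g)}(G_{\psi_0}\cap H,\,G_0\cap H,\,\psi_0')$ with $\psi_0':=\sigma_{G_0}(\psi_0)$, and then reading off the statement by applying Theorem~\ref{thm:iso-char-triple} to this new relation. First I would record some bookkeeping. Since $G_0/N$ is abelian and $\psi_0$ extends $\theta$, Theorem~\ref{thm:iso-char-triple}(e) gives $\psi_0'\in\Irr(G_0\cap H\mid\vhi)$, so $\psi_0'$ is irreducible; writing $\psi_0=\trace(\cQ_0\otimes\cP\rceil_{G_0})$ for a one-dimensional projective representation $\cQ_0$ of $G_0/N$ whose factor set is inverse to that of $\cP\rceil_{G_0}$, the definition of $\sigma_{G_0}$ moreover shows that $\psi_0'$ extends $\vhi$ to $G_0\cap H$. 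Since $G_0\unlhd G$, Theorem~\ref{thm:iso-char-triple}(c) shows that $\psi_0'$ is $(G_{\psi_0}\cap H)$-invariant, so both $(G_{\psi_0},G_0,\psi_0)$ and $(G_{\psi_0}\cap H,G_0\cap H,\psi_0')$ are character triples; finally $G=G_0H$ gives $J=G_0(J\cap H)$ and $G_0\cap(J\cap H)=G_0\cap H$ for every subgroup $G_0\le J\le G_{\psi_0}$.

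To build the lifted relation I would pick, using \cite[11.2]{Is06}, a projective representation $\cT$ of $G_{\psi_0}$ associated with $(G_{\psi_0},G_0,\psi_0)$, and after replacing $\cT$ by an equivalent projective representation arrange that $\cT\rceil_N=\cP\rceil_N$. As $\cP\rceil_N$ affords the irreducible character $\theta$, Schur's lemma yields a function $\nu\colon G_{\psi_0}\to\CC^\ti$ with $\nu\rceil_N\equiv 1$ and $\cT(x)=\nu(x)\cP(x)$, so that $\nu$ is a one-dimensional projective representation of $G_{\psi_0}/N$; and since $\cT\rceil_{G_0}$ and $\cQ_0\otimes\cP\rceil_{G_0}$ are linear representations both affording $\psi_0$ and both agreeing with $\cP\rceil_N$ on $N$, Schur's lemma forces $\cT\rceil_{G_0}=\cQ_0\otimes\cP\rceil_{G_0}$, i.e.\ $\nu\rceil_{G_0}=\cQ_0$. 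Then I would set $\cT':=(\nu\rceil_{G_{\psi_0}\cap H})\otimes\cP'\rceil_{G_{\psi_0}\cap H}$; using $\al'=\al\rceil_{H\ti H}$ together with $\nu\rceil_{G_0}=\cQ_0$, a routine check shows that $\cT'$ is a projective representation of $G_{\psi_0}\cap H$ associated with $(G_{\psi_0}\cap H,G_0\cap H,\psi_0')$ (its restriction to $G_0\cap H$ being a genuine linear representation affording $\psi_0'$), and that the factor set of $\cT'$ equals the restriction of the factor set of $\cT$ to $(G_{\psi_0}\cap H)\ti(G_{\psi_0}\cap H)$. Hence $(G_{\psi_0},G_0,\psi_0)\geqslant_{(g)}(G_{\psi_0}\cap H,G_0\cap H,\psi_0')$ is given by $(\cT,\cT')$. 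I expect this lifting to be the crux of the argument: it is where the hypotheses that $G_0/N$ is abelian and that $\psi_0$ extends $\theta$ get used, and it requires careful tracking of the twisting cocycle $\nu$ (in particular the identifications $\nu\rceil_{G_0}=\cQ_0$ and $\cT'\rceil_{G_0\cap H}$ affording $\psi_0'$).

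Finally I would apply Theorem~\ref{thm:iso-char-triple} to the relation $(G_{\psi_0},G_0,\psi_0)\geqslant_{(g)}(G_{\psi_0}\cap H,G_0\cap H,\psi_0')$ via $(\cT,\cT')$. Since $G_{\psi_0}=G_0(G_{\psi_0}\cap H)$, the ``in particular'' clause of that theorem gives a strong isomorphism between $(G_{\psi_0},G_0,\psi_0)$ and $(G_{\psi_0}\cap H,G_0\cap H,\psi_0')$, hence, for every $G_0\le J\le G_{\psi_0}$, a bijection $\Irr(J\mid\psi_0)\to\Irr(J\cap H\mid\psi_0')$ realized by the associated linear map on $\Char(J\mid\psi_0)$. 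It then remains to identify that linear map with $\sigma_J$ on $\Char(J\mid\psi_0)$. Any $\chi\in\Char(J\mid\psi_0)$ can be written $\chi=\trace(\cQ\otimes\cT\rceil_J)$ for a projective representation $\cQ$ of $J/G_0$ with factor set inverse to that of $\cT\rceil_J$; since $\cT\rceil_J=(\nu\rceil_J)\otimes\cP\rceil_J$, the product $\cQ\otimes(\nu\rceil_J)$ is a projective representation of $J/N$ with factor set inverse to that of $\cP\rceil_J$, and $\chi=\trace\big((\cQ\otimes(\nu\rceil_J))\otimes\cP\rceil_J\big)$; so by the definition of $\sigma_J$,
\[\sigma_J(\chi)=\trace\big((\cQ\otimes(\nu\rceil_J))\rceil_{J\cap H}\otimes\cP'\rceil_{J\cap H}\big)=\trace\big(\cQ\rceil_{J\cap H}\otimes\cT'\rceil_{J\cap H}\big),\]
which is exactly the image of $\chi$ under the map coming from the new relation. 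Therefore $\sigma_J$ restricts to the asserted bijection $\Irr(J\mid\psi_0)\to\Irr(J\cap H\mid\psi_0')$, as required.
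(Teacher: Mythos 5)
Your proposal is correct and follows essentially the same route as the paper: both lift the relation to $(G_{\psi_0},G_0,\psi_0)\geqslant_{(g)}(H_{\psi_0},G_0\cap H,\sigma_{G_0}(\psi_0))$ by writing a projective representation associated with $(G_{\psi_0},G_0,\psi_0)$ as a scalar twist $\nu\otimes\cP\rceil_{G_{\psi_0}}$ (the paper's $\cU$), transporting the twist to $\cP'$, and then identifying the resulting map with $\sigma_J$. Your write-up just makes explicit the Schur's lemma and factor-set verifications that the paper leaves as "can be checked directly."
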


\begin{proof}	
Let $\al$ and $\al'$ be the factor sets of the projective representation $\cP$
and $\cP'$, respectively. First, the character $\psi_0$ is afforded by
$\cQ\otimes\cP\rceil_{G_0}$ where $\cQ$ is a projective $\CC$-representation of
$G_0/N$. Let $\cD$ be a projective $\CC$-representation of $G_{\psi_0}$
associated with the character triple $(G_{\psi_0},G_0,\psi_0)$ with
$\cD\rceil_{G_0}=\cQ\otimes\cP\rceil_{G_0}$.
Suppose that $\cD$ has the factor set $\beta$. Using the proof of
\cite[Thm.~8.16]{Na98}, we can write $\cD=\cU\otimes\cP\rceil_{G_{\psi_0}}$
where $\cU$ is a projective $\CC$-representation of $G_{\psi_0}$.
This implies that $\cU$ has factor set
$\beta(\al\rceil_{G_{\psi_0}\ti G_{\psi_0}})^{-1}$ and $\cU\rceil_{G_0}=\cQ$.

Let $\psi_0'=\sigma_{G_0}(\psi_0)$. Then $\psi'_0\in\Irr(G_0\cap H\mid\vhi)$ by
Theorem~\ref{thm:iso-char-triple}(e).
Define $\cD'=\cU\rceil_{H_{\psi_0}}\otimes\cP'\rceil_{H_{\psi_0}}$, which is a
projective $\CC$-representation of $H_{\psi_0}$ associated with
$(H_{\psi_0}, G_0\cap H,\psi'_0)$. 
It can be checked directly that $(\cD,\cD')$ gives
\[(G_{\psi_0},G_0,\psi_0)\geqslant_{(g)}(H_{\psi_0},G_0\cap H,\psi'_0).\]
Note that $G_{\psi_0}=G_0 H_{\psi_0}$. Let $\sigma'$ be the isomorphism between
the character triples $(G_{\psi_0},G_0,\psi_0)$ and
$(H_{\psi_0},G_0\cap H,\psi'_0)$. We can show that
$\sigma'_J(\psi)=\sigma_J(\psi)$ for $G_0\le J\le G_{\psi_0}$ and
$\psi\in\Irr(J\mid\psi_0)$. Thus we complete the proof.
\end{proof}

Let $\cP$ be a projective representation associated with a character triple
$(G,N,\theta)$. Then the matrix $\cP(c)$ is scalar for all $c\in\C_G(N)$.

\begin{defn}\label{def-central-isomrophism} 
 Suppose that $(G,N,\theta)\geqslant_{(g)}(H,M,\vhi)$ is given by $(\cP,\cP')$.
 We write
 \[(G,N,\theta)\geqslant_{(g),c}(H,M,\vhi)\]
 if the following conditions are satisfied:
 \begin{enumerate}[\rm(1)]
	\item $\C_G(N)\le H$.
	\item For every $c\in\C_G(N)$ the scalars associated to $\cP(c)$ and
     $\cP'(c)$ coincide.
 \end{enumerate}
 In this situation, we say that \emph{$(G,N,\theta)\geqslant_{(g),c}(H,M,\vhi)$
 is given by $(\cP,\cP')$}.
\end{defn}

\begin{defn} \label{def-block-isomrophism} 
 Suppose that $(G,N,\theta)\geqslant_{(g),c}(H,M,\vhi)$ is given
 by $(\cP,\cP')$. We write
 \[(G,N,\theta)\geqslant_{(g),b}(H,M,\vhi)\]
 if the following conditions are satisfied:
 \begin{enumerate}[\rm(1)]
	\item A defect group $D$ of $\bl(\vhi)$ satisfies $\C_G(D)\le H$.
	\item The maps $\sigma_J$ ($N\le J\le G$) induced by $(\cP,\cP')$ as in
     Theorem~\ref{thm:iso-char-triple} satisfy that
     \[\bl(\psi)=\bl(\sigma_J(\psi))^J\]
     for every subgroup $N\le J\le G$ and every $\psi\in\Irr(J\mid \theta)$
     with $\sigma_J(\psi)\in\Irr(J\cap H\mid\vhi)$.		
 \end{enumerate}
 In this situation, we say that \emph{$(G,N,\theta)\geqslant_{(g),b}(H,M,\vhi)$
 is given by $(\cP,\cP')$}.
\end{defn}

In Definition~\ref{def-block-isomrophism}, for every $N\le J\le G$ and every
character $\tvhi\in\Irr(J\cap H\mid\vhi)$, some defect group $\tD$ of
$\bl(\tvhi)$ satisfies $D\le\tD$ and hence
$\C_J(\tD)\le\C_J(D)\le J\cap H$; in particular, $\bl(\tvhi)^J$ is
defined.

\begin{lem}   \label{lem:ectend-block-iso}
 Let $N\unlhd G$, $H\le G$ and $M=H\cap N$. Let $\ttheta\in\Irr(G)$ and
 $\tvhi\in\Irr(H)$ with $\theta:=\Res^G_{N}(\ttheta)\in\Irr(N)$ and
 $\vhi:=\Res^H_M(\tvhi)\in\Irr(M)$.
 Assume that a defect group $D$ of $\bl(\vhi)$ satisfies $\C_G(D)\le H$.
 Suppose that the linear $\CC$-representations $\cP$ and $\cP'$ of $G$ and $H$
 afford $\ttheta$ and $\tvhi$ respectively.
 Then the following are equivalent. 
 \begin{enumerate}[\rm(a)]
  \item	$(\cP,\cP')$ gives $(G,N,\theta)\geqslant_{(g),b}(H,M,\vhi)$.
  \item \begin{enumerate}[\rm(1)]	
	\item $\Irr(\C_G(N)\mid\theta)=\Irr(\C_G(N)\mid\vhi)$, and
	\item $\bl(\Res^G_J(\ttheta))=\bl(\Res^H_{J\cap H}(\tvhi))^J$ for every
     subgroup $N\le J\le G$ with $J/N$ cyclic.
  \end{enumerate}
 \end{enumerate}
\end{lem}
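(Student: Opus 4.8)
The plan is to prove the equivalence by establishing each implication separately, using the structural results about $G[b]$ and induced blocks assembled in Section~\ref{sec:Preli}. Throughout, let $b=\bl(\vhi)$ and $b_0=\bl(\theta)$; note that since $\vhi=\Res^H_M(\tvhi)$ is $H$-invariant (being extended by $\tvhi$), the hypothesis $\C_G(D)\le H$ with $D$ a defect group of $b$ guarantees, as remarked after Definition~\ref{def-block-isomrophism}, that $\bl(\tvhi)^J$ is defined for every $N\le J\le G$.

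For the implication (a)$\Rightarrow$(b): assume $(\cP,\cP')$ gives $(G,N,\theta)\geqslant_{(g),b}(H,M,\vhi)$. Condition (1) of (b) is immediate from the central compatibility built into $\geqslant_{(g),c}$: for $c\in\C_G(N)$ the scalars of $\cP(c)$ and $\cP'(c)$ coincide, and since $\cP$, $\cP'$ are here \emph{linear} representations affording $\ttheta$, $\tvhi$, restriction to $\C_G(N)$ shows that $\Res^G_{\C_G(N)}(\ttheta)$ and $\Res^H_{\C_G(N)}(\tvhi)$ have the same irreducible constituents, giving $\Irr(\C_G(N)\mid\theta)=\Irr(\C_G(N)\mid\vhi)$ (here one uses that the constituents over $\theta$ of $\C_G(N)$ are exactly those appearing in $\Res(\ttheta)$, since $\Res^G_N(\ttheta)=\theta$). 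Condition (2) of (b) is the special case of Definition~\ref{def-block-isomrophism}(2) obtained by taking $\psi=\Res^G_J(\ttheta)$, whose $\sigma_J$-image is $\Res^H_{J\cap H}(\tvhi)$ because $\sigma_J$ restricts $\cP$-tensored characters by restricting $\cP$ to $\cP'$, and for linear $\cP$ this sends $\ttheta\rceil_J$ to $\tvhi\rceil_{J\cap H}$; so $\bl(\Res^G_J(\ttheta))=\bl(\Res^H_{J\cap H}(\tvhi))^J$, a fortiori for $J/N$ cyclic.

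For the converse (b)$\Rightarrow$(a): from (b)(2), Proposition~\ref{prop:induced-extensions} upgrades the cyclic-quotient block equalities to $\bl(\tvhi\eta')^G=\bl(\ttheta\eta)$ for all $\eta\in\Irr(G/N)$ with $\Res^G_H(\eta)$ irreducible; more importantly, the cyclic case also feeds Lemma~\ref{lem:induced-blocks2} (applied with a suitable abelian $K/N$, or iteratively) and the $G[b]$-machinery of \cite{KS15,Mu13} to propagate the block equality $\bl(\Res^G_J(\ttheta))=\bl(\Res^H_{J\cap H}(\tvhi))^J$ to \emph{all} $N\le J\le G$, not merely cyclic ones. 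Then one checks Definition~\ref{def-block-isomrophism}: condition (1) is the standing hypothesis $\C_G(D)\le H$; for condition (2), given $N\le J\le G$ and $\psi\in\Irr(J\mid\theta)$ with $\sigma_J(\psi)\in\Irr(J\cap H\mid\vhi)$, write $\psi=\trace(\cQ\otimes\ttheta\rceil_J)$ for a genuine (linear, since $\cP$ is linear) representation $\cQ$ of $J/N$, so $\psi=\ttheta\rceil_J\cdot\beta$ with $\beta\in\Irr(J/N)$ and $\sigma_J(\psi)=\tvhi\rceil_{J\cap H}\cdot\Res^J_{J\cap H}(\beta)$; the block of a product with an inflated character is controlled by Lemma~\ref{lem:central-char} and the block-induction compatibility (Lemma~\ref{lemma:blockinductions1}), so $\bl(\psi)=\bl(\sigma_J(\psi))^J$ follows from $\bl(\Res^G_J(\ttheta))=\bl(\Res^H_{J\cap H}(\tvhi))^J$ together with (b)(1) to handle the central-character bookkeeping via $G[b]$. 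Finally, (b)(1) also guarantees the central-isomorphism clause (Definition~\ref{def-central-isomrophism}(2)) is consistent with the linear $\cP,\cP'$, so $\geqslant_{(g),c}$ holds and hence $\geqslant_{(g),b}$.

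The main obstacle is the passage, in (b)$\Rightarrow$(a), from block equalities for cyclic quotients $J/N$ to block equalities for \emph{all} intermediate $J$ simultaneously compatible with a single pair $(\cP,\cP')$ — i.e.\ correctly invoking the $G[b]$-reduction of \cite{KS15,Mu13} and the extension lemma \cite[Lemma~3.2]{CS15} (as already done inside the proof of Lemma~\ref{lem:induced-blocks2}) to produce the required global coherence, and then verifying that the linear representations $\cP$, $\cP'$ one started with — rather than some other choice — witness Definition~\ref{def-block-isomrophism}(2) for \emph{arbitrary} $\psi\in\Irr(J\mid\theta)$, not just $\psi=\Res^G_J(\ttheta)$. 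This is exactly where the structure $G[b]\le N\C_G(D)\le NH$ and Murai's result that $\bl(\Res^G_J(\ttheta'))$ is the unique block of $J$ over $\bl(\Res^G_{J\cap G[b]}(\ttheta'))$ do the decisive work, reducing an a priori infinite family of block conditions to the single one on $G[b]$ and the trivially-handled behaviour under multiplication by $\Irr(J/N)$.
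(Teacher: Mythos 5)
Your direction (a)$\Rightarrow$(b) is fine and matches the paper, which simply notes it holds by definition. The problem is in (b)$\Rightarrow$(a), specifically in how you pass from the cyclic-quotient hypothesis (b)(2) to condition (2) of Definition~\ref{def-block-isomrophism} for arbitrary $N\le J\le G$ and arbitrary $\psi\in\Irr(J\mid\theta)$. You propose to do this by feeding the cyclic case into Lemma~\ref{lem:induced-blocks2} and the $G[b]$-machinery of \cite{KS15,Mu13}, and you yourself flag as ``the main obstacle'' the need to check that the \emph{given} pair $(\cP,\cP')$ --- not some other choice --- witnesses the block equalities; but you then only assert that Murai's uniqueness and $G[b]\le NH$ ``do the decisive work'' without giving an argument. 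This route does not go through as written: Lemma~\ref{lem:induced-blocks2} requires a chain $N\le K\unlhd G$ with both $K/N$ and $G/K$ abelian, which is not available here, and its conclusion produces some \emph{other} extension $\ttheta'$ of $\Res^G_K(\ttheta)$, so it cannot certify the block equality for the character $\Res^G_J(\ttheta)$ that you actually need. Nothing in your write-up closes this gap, and the appeal to Lemma~\ref{lem:central-char} and Lemma~\ref{lemma:blockinductions1} to ``control the block of a product with an inflated character'' is not a substitute, since neither lemma gives that.

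The fix --- and the paper's actual argument --- is that no $G[b]$-machinery is needed at all. Since $\cP$ and $\cP'$ are linear, Gallagher gives $\Irr(J\mid\theta)=\{\Res^G_J(\ttheta)\eta\mid \eta\in\Irr(J/N)\}$ and $\sigma_J(\Res^G_J(\ttheta)\eta)=\Res^H_{J\cap H}(\tvhi)\Res^J_{J\cap H}(\eta)$, which lies in $\Irr(J\cap H\mid\vhi)$ exactly when $\Res^J_{J\cap H}(\eta)$ is irreducible. Now apply Proposition~\ref{prop:induced-extensions} with $(J,\,J\cap H,\,\Res^G_J(\ttheta),\,\Res^H_{J\cap H}(\tvhi))$ in place of $(G,H,\ttheta,\tvhi)$: its hypotheses concern only subgroups $N\le J'\le J$ with $J'/N$ cyclic and are therefore contained in (b)(2) (definedness of the induced blocks being guaranteed by $\C_G(D)\le H$, as noted after Definition~\ref{def-block-isomrophism}), and its conclusion is precisely $\bl(\Res^G_J(\ttheta)\eta)=\bl(\Res^H_{J\cap H}(\tvhi)\Res^J_{J\cap H}(\eta))^J$ for all such $\eta$. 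That single application, carried out for every $J$, is the whole of Definition~\ref{def-block-isomrophism}(2); the remaining conditions follow from $\bl(\vhi)^N=\bl(\theta)$ (the case $J=N$ of (b)(2)), which forces $\C_G(N)\le\C_G(D)\le H$, and from (b)(1) as you indicated.
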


\begin{proof}
By the definition, (a) implies (b). Now we prove that (b) implies (a).	
The projective representations $\cP$ and $\cP'$ are linear and hence their
factor sets are trivial.
Thus the conditions of Definition~\ref{def-1-char-triple} are satisfied.
By (b), $\bl(\vhi)^N=\bl(\theta)$, and thus $\bl(\theta)$ has a defect group
$\tD$ with $D\le\tD$.
We deduce from this that \[\C_G(N)\le\C_G(\tD)\le\C_G(D)\le H.\] 
By (b.1), the conditions of Definition~\ref{def-central-isomrophism} are
satisfied. Let us denote by $\sigma$ the linear maps defined by $(\cP,\cP')$
as in Theorem~\ref{thm:iso-char-triple}.
Then $\sigma_J(\Res^G_J(\ttheta)\eta)
 =\Res^H_{J\cap H}(\tvhi)\Res^J_{J\cap H}(\eta)$
for every $N\le J\le G$ and $\eta\in\Irr(J/N)$ with
$\Res^J_{J\cap H}(\eta)\in\Irr(J\cap H)$.
Therefore, the conditions of Definition~\ref{def-block-isomrophism} follow from Proposition~\ref{prop:induced-extensions}.
\end{proof}

\begin{defn}[Definitions~2.1,~2.7 and~4.2 of~\cite{Sp18}] 
 Let $*\in\{\,\emptyset,c,b\,\}$. Suppose that
 \[(G,N,\theta)\geqslant_{(g),*}(H,M,\vhi)\]
 is given by $(\cP,\cP')$. If $G=NH$, then we say that \emph{$(\cP,\cP')$ gives
 \[(G,N,\theta)\geqslant_{*}(H,M,\vhi).\]}

 Following \cite{NS14,Sp18}, the relation ``$\geqslant_c$'' (resp.\ 
 ``$\geqslant_b$'') is called \emph{central} (resp. \emph{block})
 \emph{isomorphism} between character triples.
\end{defn}

\begin{rmk}\label{rmk:gen-isomorphisms} 
 Suppose that $(G,N,\theta)$ and $(H,M,\vhi)$ are two character triples with
 $H\le G$ and $M=H\cap N$. Let $\cP$ and $\cP'$ be projective
 $\CC$-representations associated $(G,N,\theta)$ and $(H,M,\vhi)$
 respectively. 	
 \begin{enumerate}[\rm(a)]
  \item $(\cP,\cP')$ gives $(G,N,\theta)\geqslant_{(g)}(H,M,\vhi)$ if and only
   if $(\cP\rceil_{NH},\cP')$ gives $(NH,N,\theta)\geqslant(H,M,\vhi)$.
  \item Assume further $\C_G(N)\le H$. Then $(\cP,\cP')$ gives
   $(G,N,\theta)\geqslant_{(g),c}(H,M,\vhi)$ if and only if
   $(\cP\rceil_{NH},\cP')$ gives $(NH,N,\theta)\geqslant_c(H,M,\vhi)$.
  \item A similar statement holds for $\geqslant_{(g),b}$; see Corollary~\ref{rmk:gen-isomorphisms-b}. 
 \end{enumerate}
\end{rmk}	

\begin{rmk}
If $(G,N,\theta)\geqslant_{(g),*}(H,M,\vhi)$, then
$(J,N,\theta)\geqslant_{(g),*}(J\cap H,M,\vhi)$ for $*\in\{\,\emptyset,c,b\,\}$
and every $N\le J\le G$.
\end{rmk}

\begin{lem}\label{lem:change-proj}
 Suppose that $(G,N,\theta)\geqslant_{(g),*}(H,M,\vhi)$ is given by
 $(\cP,\cP')$ where $*\in\{\,\emptyset,c,b\,\}$.
 Then for any projective $\CC$-representation $\cP_1$ associated with
 $(G,N,\theta)$, there exists a projective $\CC$-representation $\cP'_1$
 associated with $(H,M,\vhi)$ such that $(\cP_1,\cP_1')$ also gives
 $(G,N,\theta)\geqslant_{(g),*}(H,M,\vhi)$ and the linear maps between the
 character triples $(G,N,\theta)$ and $(H,M,\vhi)$ defined as in
 Theorem~\ref{thm:iso-char-triple} by $(\cP,\cP')$ and by $(\cP_1,\cP_1')$
 coincide.
\end{lem}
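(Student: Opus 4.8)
The plan is to exploit the fact that any two projective representations associated with the same character triple $(G,N,\theta)$ differ, up to conjugation, by a scalar twist coming from a $1$-cochain of $G/N$ that is trivial on $N$. First I would reduce to the case $\cP_1\rceil_N=\cP\rceil_N$: since $\cP\rceil_N$ and $\cP_1\rceil_N$ both afford $\theta$, there is $T\in\GL_{\theta(1)}(\CC)$ conjugating one to the other, and replacing $\cP_1$ by $g\mapsto T\cP_1(g)T^{-1}$ changes neither its factor set, nor the scalars $\cP_1(c)$ for $c\in\C_G(N)$, nor any trace of the form $\trace(\cQ\otimes\cP_1\rceil_J)$ (conjugate the whole tensor product by $I\otimes T$). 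Hence a representation $\cP_1'$ of $H$ that works for the conjugate of $\cP_1$ will work for $\cP_1$ itself, with the same maps $\sigma_J$. This reduction is the one step where a little care is needed, since $\cP_1'$ is a representation of $H$ of a different degree and the conjugation only acts on the $G$-side; the safeguard is exactly the trace-invariance just noted.

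Assuming now $\cP_1\rceil_N=\cP\rceil_N$, I would put $\mu(g)I=\cP_1(g)\cP(g)^{-1}$. This is scalar because it centralises the irreducible $\cP\rceil_N$, and the defining identities $\cP_1(ng)=\cP_1(n)\cP_1(g)$, $\cP_1(gn)=\cP_1(g)\cP_1(n)$ (together with the same identities for $\cP$ and $\cP_1\rceil_N=\cP\rceil_N$) force $\mu$ to be constant on $N$-cosets with $\mu\rceil_N\equiv1$, i.e.\ $\mu$ is a $1$-cochain of $G/N$. Writing $\al,\al_1,\al'$ for the factor sets of $\cP,\cP_1,\cP'$, one gets $\al_1=(\partial\mu)\,\al$, where $\partial\mu$ is the coboundary $(x,y)\mapsto\mu(x)\mu(y)\mu(xy)^{-1}$.

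Then I would define $\cP_1'(x)=\mu(x)\cP'(x)$ for $x\in H$, restricting $\mu$ along $H/M\cong NH/N\le G/N$ (note $\mu\rceil_M\equiv1$). Routine checks show that $\cP_1'$ is a projective representation associated with $(H,M,\vhi)$, with factor set $(\partial\mu\rceil_{H\ti H})\,\al'$; since $\al\rceil_{H\ti H}=\al'$ by hypothesis, this equals $\al_1\rceil_{H\ti H}$, so $(\cP_1,\cP_1')$ gives $(G,N,\theta)\geqslant_{(g)}(H,M,\vhi)$. For the coincidence of the maps of Theorem~\ref{thm:iso-char-triple}: if $\cQ$ is a projective representation of $J/N$ with factor set inverse to $\cP\rceil_J$, then $\mu^{-1}\cQ$ has factor set inverse to $\cP_1\rceil_J$, and the scalars cancel to give $(\mu^{-1}\cQ)\otimes\cP_1\rceil_J=\cQ\otimes\cP\rceil_J$ and $(\mu^{-1}\cQ)\rceil_{J\cap H}\otimes\cP_1'\rceil_{J\cap H}=\cQ\rceil_{J\cap H}\otimes\cP'\rceil_{J\cap H}$; taking traces shows $\sigma_J$ for $(\cP_1,\cP_1')$ agrees with $\sigma_J$ for $(\cP,\cP')$ on all of $\Char(J\mid\theta)$.

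Finally I would transfer the refinements. Condition (1) of Definitions~\ref{def-central-isomrophism} and~\ref{def-block-isomrophism} is independent of the chosen projective representations; for the scalar condition of Definition~\ref{def-central-isomrophism}, both $\cP_1(c)$ and $\cP_1'(c)$ acquire the same extra factor $\mu(c)$ (using $\C_G(N)\le H$) relative to $\cP(c)$ and $\cP'(c)$, which agree by hypothesis; and for Definition~\ref{def-block-isomrophism} the defect-group condition involves only $\bl(\vhi)$ (unchanged), while the block equalities transfer verbatim because the maps $\sigma_J$ are literally the same as those of $(\cP,\cP')$. I expect the only genuinely delicate point to be the legitimacy of the conjugation reduction in the first step; everything afterwards is cocycle bookkeeping.
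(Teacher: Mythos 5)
Your proposal is correct and follows essentially the same route as the paper: the paper cites \cite[Lemma~10.10(b)]{Na18} for the fact that $\cP_1(g)=\mu(g)A\cP(g)A^{-1}$ for a scalar function $\mu$ trivial on $N$-cosets, sets $\cP_1'=\mu\rceil_H\cP'$, and invokes \cite[Prop.~2.5]{Sp18} for the compatibility of factor sets and of the maps $\sigma_J$, exactly the content you verify by hand. Your conjugation reduction and cocycle bookkeeping are a correct in-line proof of those cited results, so the two arguments coincide in substance.
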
	

\begin{proof}
By \cite[Lemma~10.10(b)]{Na18}, there exists a map $\mu\colon G/N\to\CC^\ti$
and a matrix $A\in\GL_{\vhi(1)}(\CC)$ such that
$\cP_1(g)=\mu(g) A\cP(g)A^{-1}$ for all $g\in G$.
By \cite[Prop.~2.5]{Sp18}, $(\cP_1, \cP_1')$ also gives
$(G,N,\theta)\geqslant_{(g)} (H,M,\vhi)$, where $\cP_1'=\mu\rceil_H \cP'$, and
the linear maps between the character triples $(G,N,\theta)$ and $(H,M,\vhi)$
defined by $(\cP,\cP')$ and by $(\cP_1,\cP_1')$ (as in
Theorem~\ref{thm:iso-char-triple}) coincide. Therefore, we can check that
$(\cP_1,\cP_1')$ also gives $(G,N,\theta)\geqslant_{(g),*}(H,M,\vhi)$.
\end{proof}	

\begin{lem}   \label{lem-partial-order}
 \begin{enumerate}[\rm(a)]
  \item Let $*\in\{\,\emptyset,c\}$. If
   $(G,N,\theta)\geqslant_{(g),*}(H,M,\vhi)$ and $(H,M,\vhi)\geqslant_{(g),*}
    (T,L,\ze)$, then $(G,N,\theta)\geqslant_{(g),*} (T,L,\ze)$.
  \item Suppose that $(G,N,\theta)\geqslant_{(g),b}(H,M,\vhi)$ and
   $(H,M,\vhi)\geqslant_{(g),b} (T,L,\ze)$. Assume that a defect group $D$ of
   $\bl(\ze)$ satisfies that $\C_G(D)\le T$. Then
   $(G,N,\theta)\geqslant_{(g),b} (T,L,\ze)$.
 \end{enumerate}
\end{lem}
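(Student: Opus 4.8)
The plan is to prove transitivity of the generalized relations $\geqslant_{(g),*}$ by reducing, via Remark~\ref{rmk:gen-isomorphisms}, to the corresponding transitivity statements for the classical relations $\geqslant_*$ on the quotients $NH$ and $MT$, which are already known from \cite{NS14,Sp18}. Concretely, suppose $(\cP,\cP')$ gives $(G,N,\theta)\geqslant_{(g),*}(H,M,\vhi)$ and $(\cP',\cP'')$ gives $(H,M,\vhi)\geqslant_{(g),*}(T,L,\ze)$; here by Lemma~\ref{lem:change-proj} I may assume the ``middle'' projective representation of $H$ is the same $\cP'$ in both, after replacing $\cP''$ by a scalar-adjusted $\cP''_1$, since the linear maps $\sigma$ are unchanged. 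I then need to check: (i) $T\le G$ and $N\cap T = L$; (ii) the factor set $\al$ of $\cP$ restricts on $T\times T$ to the factor set $\al''$ of $\cP''$; (iii) in the central case, that the scalars of $\cP(c)$ and $\cP''(c)$ agree for $c\in\C_G(N)$; (iv) in the block case, the defect-group containment and the block identity $\bl(\psi)=\bl(\sigma''_J(\psi))^J$.

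For (i): $N\cap T = (N\cap H)\cap T = M\cap T = L$, and $T\le H\le G$. For (ii): $\al\rceil_{H\times H}=\al'$ and $\al'\rceil_{T\times T}=\al''$, so $\al\rceil_{T\times T}=\al''$. For (iii): the scalar of $\cP(c)$ equals that of $\cP'(c)$ for $c\in\C_G(N)$ (note $\C_G(N)\le H$ from Definition~\ref{def-central-isomrophism}(1) applied to the first relation); and since $\C_G(N)\le\C_H(M)\cdot(\text{stuff})$—more carefully, $\C_G(N)\le\C_G(M)\cap H$ because any element centralizing $N\supseteq M$... actually I need $\C_G(N)\le\C_H(M)$, which holds since $\C_G(N)\le H$ and $\C_G(N)\le\C_G(M)$ as $M\le N$; then the scalar of $\cP'(c)$ equals that of $\cP''(c)$ by Definition~\ref{def-central-isomrophism}(2) for the second relation, noting $\C_H(M)\le T$ is exactly the hypothesis $\C_H(M)\le T$ there, and $\C_G(N)\le\C_H(M)\le T$. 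Composing gives (iii), and also confirms $\C_G(N)\le T$. The iterate argument for the maps: by Theorem~\ref{thm:iso-char-triple}, the map attached to the composite $(\cP,\cP'')$ on $\Char(J\mid\theta)$ sends $\trace(\cQ\otimes\cP\rceil_J)$ to $\trace(\cQ\rceil_{J\cap T}\otimes\cP''\rceil_{J\cap T})$, and inserting $J\cap H$ as an intermediate step shows this equals $\sigma'_{J\cap H}\circ\sigma_J$ on characters, so the composite of the two Clifford isomorphisms is realized by $(\cP,\cP'')$.

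For part (b), the block condition: given $(\cP,\cP')$ gives $\geqslant_{(g),b}$ and $(\cP',\cP'')$ gives $\geqslant_{(g),b}$, and a defect group $D_\ze$ of $\bl(\ze)$ has $\C_G(D_\ze)\le T$. First I observe the defect-group chain: $\bl(\ze)^M=\bl(\vhi)$ and $\bl(\vhi)^N=\bl(\theta)$ (these follow by taking $J=N$, resp.\ $J=M$, in Definition~\ref{def-block-isomrophism}(2)), so a defect group $D_\vhi\supseteq D_\ze^{\,x}$ of $\bl(\vhi)$ and $D_\theta\supseteq D_\vhi^{\,y}$ of $\bl(\theta)$ can be chosen nested appropriately; in particular $\C_G(D_\vhi)\le\C_G(D_\ze)\le T$ after conjugating, so Definition~\ref{def-block-isomrophism}(1) for the composite is satisfied with the defect group of $\bl(\vhi)$—wait, I need it for $\bl(\ze)$, which is the hypothesis, so (1) is immediate. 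For (2): take $N\le J\le G$ and $\psi\in\Irr(J\mid\theta)$ with $(\sigma'\sigma)_J(\psi)\in\Irr(J\cap T\mid\ze)$. Set $\chi=\sigma_J(\psi)$. I need $\chi\in\Irr(J\cap H\mid\vhi)$: this requires $\sigma'_{J\cap H}(\chi)\in\Irr(J\cap T\mid\ze)$ forcing $\chi$ irreducible and over $\vhi$, which should follow because $\sigma'$ restricted to the relevant sets behaves well on constituents (by Theorem~\ref{thm:iso-char-triple}(a) applied with $K\cap H$, or directly: $\sigma'_{J\cap H}$ maps $\Char(J\cap H\mid\vhi)$ to $\Char(J\cap T\mid\ze)$ compatibly with restriction and decomposition, so the preimage of an irreducible over $\ze$ lying in $\Irr(J\cap H\mid\vhi)$ is irreducible over $\vhi$). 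Then Definition~\ref{def-block-isomrophism}(2) for the first relation gives $\bl(\psi)=\bl(\chi)^J$, and for the second gives $\bl(\chi)=\bl(\sigma'_{J\cap H}(\chi))^{J\cap H}=\bl((\sigma'\sigma)_J(\psi))^{J\cap H}$; applying transitivity of block induction, $\bl(\psi)=\bl((\sigma'\sigma)_J(\psi))^J$, which is (2) for the composite.

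\textbf{The main obstacle} I anticipate is the bookkeeping in part (b) verifying that $\sigma_J(\psi)$ genuinely lands in $\Irr(J\cap H\mid\vhi)$ whenever $(\sigma'\sigma)_J(\psi)\in\Irr(J\cap T\mid\ze)$—the block-isomorphism condition only constrains $\sigma_J$ on characters whose image is already irreducible over $\vhi$, so I must argue that irreducibility of the \emph{further} image propagates backwards. This should be handled by the surjectivity/compatibility parts of Theorem~\ref{thm:iso-char-triple} together with the fact that $\sigma'$ is a genuine strong isomorphism of the character triples $(G_0(H)\cdots)$—more precisely by noting $\sigma'_{J\cap H}$ is injective on $\Char(J\cap H\mid\vhi)$ and maps irreducibles-over-$\vhi$ to sums of irreducibles-over-$\ze$, so an irreducible image can only come from an irreducible source; and when $J\cap H = M(J\cap T)$ (which holds whenever $J = N(J\cap T)$, needed for the strong-isomorphism statement) it is an honest bijection by part (d). The remaining steps are the routine factor-set and scalar chases above and should present no difficulty.
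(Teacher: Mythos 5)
Your proposal is correct and follows essentially the same route as the paper: normalize the middle projective representation via Lemma~\ref{lem:change-proj} so that both relations are given through the same $\cP'$, and then verify directly that $(\cP,\cP'')$ satisfies the factor-set, centralizer-scalar and block conditions, the paper deferring these checks to the argument of \cite[Lemma~3.8]{NS14}. Your additional observation that irreducibility of $(\sigma'\circ\sigma)_J(\psi)$ forces irreducibility of $\sigma_J(\psi)$ (because $\sigma'_{J\cap H}$ is additive and sends nonzero characters to nonzero characters) is exactly the point needed to make the block condition compose, and is handled correctly.
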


\begin{proof}
Suppose that $(G,N,\theta)\geqslant_{(g),*}(H,M,\vhi)$ is given by $(\cP,\cP')$
and $(H,M,\vhi)\geqslant_{(g),*} (T,L,\ze)$ is given by $(\cP',\cP'')$.
The existence of such projective representations is implied by
Lemma~\ref{lem:change-proj}. From this, it can be checked that $(\cP, \cP'')$
gives $(G,N,\theta)\geqslant_{(g),*} (T,L,\ze)$ (similar as in the proof of
\cite[Lemma~3.8]{NS14}) and then this assertion holds.
\end{proof}

In \cite[Lemma~3.13]{NS14} and \cite[Prop.~3.13]{Sp17}, the behavior of block
isomorphic character triples is analyzed with respect to certain quotients.
Now we prove the following adaptation for $\geqslant_{(g),*}$.

\begin{prop}   \label{prop:going-to-central-quotient}
 Let $*\in\{c,b\}$.	Suppose that $(G,N,\theta)\geqslant_{(g),*}(H,M,\vhi)$.
 Let $Z\unlhd G$ with $Z\le\C_G(N)$ and $Z\cap N=1$.
 Let $\overline G=G/Z$, $\overline N=NZ/Z$, $\overline H=H/Z$
 and $\overline M=MZ/Z$, and let $\overline\theta\in\Irr(\overline N)$ and
 $\overline\vhi\in\Irr(\overline M)$ be the characters corresponding to
 $\theta$ and $\vhi$ respectively. 
 \begin{enumerate}[\rm(a)]
  \item Assume that either $Z$ is a $p'$-subgroup or a central
   $p$-subgroup of $G$. Then \[(\overline G,\overline N,\overline \theta)
    \geqslant_{(g),*}(\overline H,\overline M,\overline \vhi).\]
  \item Assume that $Z\le\Z(G)$ is central. Then $(\overline G,\overline N,\overline \theta)\geqslant_{(g),*}(\overline H,\overline M,\overline \vhi)$.
 \end{enumerate}
\end{prop}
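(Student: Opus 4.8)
The plan is to reduce Proposition~\ref{prop:going-to-central-quotient} to the already-known central/block quotient results for the relation $\geqslant_*$ (namely \cite[Lemma~3.13]{NS14} and \cite[Prop.~3.13]{Sp17}) by passing through the subgroup $NH$. First I would invoke Remark~\ref{rmk:gen-isomorphisms}: the hypothesis $(G,N,\theta)\geqslant_{(g),*}(H,M,\vhi)$ given by some $(\cP,\cP')$ is equivalent to $(NH,N,\theta)\geqslant_*(H,M,\vhi)$ given by $(\cP\rceil_{NH},\cP')$ (for $*=b$ one uses Corollary~\ref{rmk:gen-isomorphisms-b}). Since $Z\le\C_G(N)$ and $Z\unlhd G$, in particular $Z\le NH$ is normal in $NH$; moreover $\overline{NH}=(NH)Z/Z=\overline N\,\overline H$. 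Applying the classical quotient results to the pair $(NH,N,\theta)\geqslant_*(H,M,\vhi)$ yields $(\overline{NH},\overline N,\overline\theta)\geqslant_*(\overline H,\overline M,\overline\vhi)$, which by Remark~\ref{rmk:gen-isomorphisms} applied in $\overline G$ gives exactly $(\overline G,\overline N,\overline\theta)\geqslant_{(g),*}(\overline H,\overline M,\overline\vhi)$, since $\overline{NH}=\overline N\,\overline H$ and $\overline N\cap\overline H=\overline M$ (using $Z\cap N=1$, which guarantees $\Irr(\overline N)$ identifies with $\Irr(N\mid\text{triv of }Z)$, i.e.\ $\theta\leftrightarrow\overline\theta$ is a genuine bijection on the relevant characters).

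The main verification is that the hypotheses of the cited quotient results are met. For part~(a): when $Z$ is a $p'$-group, \cite[Lemma~3.13]{NS14} requires only $Z\le\C_{NH}(N)$ with $Z\cap N=1$, which holds; the block-isomorphism statement then needs the defect-group condition, but $Z$ being a $p'$-group means defect groups are unchanged under $G\to\overline G$, so $\C_G(D)\le H$ passes to $\C_{\overline G}(\overline D)\le\overline H$ directly. When $Z$ is a central $p$-subgroup, one instead cites \cite[Prop.~3.13]{Sp17}, and here one must track how the defect group $\overline D$ of $\bl(\overline\vhi)$ relates to $D$: since $Z\le\Z(G)\cap\ker$ of the block, $\overline D=DZ/Z$ and $\C_{\overline G}(\overline D)$ is the image of $\C_G(DZ)=\C_G(D)$ (as $Z$ is central), so again $\C_{\overline G}(\overline D)\le\overline H$. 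For part~(b), $Z$ central but of mixed order, I would decompose $Z=Z_pZ_{p'}$ with $Z_p=\mrO_p(Z)$ and $Z_{p'}=\mrO_{p'}(Z)$ both characteristic in $Z$ hence normal in $G$ and central, and apply part~(a) twice: first quotient by $Z_{p'}$, then by the image of $Z_p$, using Remark following Definition~\ref{def-block-isomrophism} to re-identify characters at each stage and the transitivity provided by the successive applications.

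The step I expect to be the genuine obstacle is the bookkeeping for the block-isomorphism case under a central $p$-quotient, specifically confirming that condition~(2) of Definition~\ref{def-block-isomrophism} descends. The maps $\sigma_J$ for $\geqslant_{(g),b}$ are defined for all $N\le J\le G$, not merely $J\le NH$, so after reducing to $NH$ and quotienting I must lift the conclusion back to all intermediate subgroups $\overline N\le\overline J\le\overline G$; but this is precisely what Remark~\ref{rmk:gen-isomorphisms}(c)/Corollary~\ref{rmk:gen-isomorphisms-b} handles, reducing $\geqslant_{(g),b}$ over $\overline G$ to $\geqslant_b$ over $\overline N\,\overline H$. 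The remaining subtlety is that block induction $\bl(-)^J$ and the central-character computation of Lemma~\ref{lem:central-char} behave well under the central $p$-quotient $G\to G/Z$: for a central $p$-subgroup $Z$ lying in the kernel of the relevant blocks, block induction commutes with inflation, and $\la_B$ is literally unchanged. I would cite \cite[Prop.~3.13]{Sp17} for this compatibility rather than re-derive it, and note that the scalar condition~(2) of Definition~\ref{def-central-isomrophism} descends trivially because $\C_{\overline G}(\overline N)$ is the image of $\C_G(N)$ and the scalars $\cP(c),\cP'(c)$ only depend on $c$ modulo $Z\le\C_G(N)$.
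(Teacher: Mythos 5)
Your route for $*=c$ --- pass to $NH$ via Remark~\ref{rmk:gen-isomorphisms}(b), quote the known quotient lemma for $\geqslant_c$, and come back up --- is workable (modulo the small point that the projective representation of $\overline{N}\,\overline{H}$ produced by the quotient lemma must be matched, via Lemma~\ref{lem:change-proj}, with the restriction of a projective representation defined on all of $\overline G$ before the ``if'' direction of the Remark can be invoked). But for $*=b$ your argument is circular within this paper's logical structure. The step that reduces $\geqslant_{(g),b}$ over $\overline G$ to $\geqslant_b$ over $\overline N\,\overline H$ --- which you correctly identify as the crux, since condition (2) of Definition~\ref{def-block-isomrophism} quantifies over \emph{all} intermediate subgroups $\overline N\le\overline J\le\overline G$, not just those inside $\overline N\,\overline H$ --- is exactly Corollary~\ref{rmk:gen-isomorphisms-b}. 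That corollary is deduced from Theorem~\ref{thm:criterion-geqslant_{(g),b}}, whose proof explicitly invokes Proposition~\ref{prop:going-to-central-quotient} (it passes to the central extension $\widehat G\to G$ and then descends through the central kernel $Z$ using this very proposition). So you cannot use Corollary~\ref{rmk:gen-isomorphisms-b} here without first proving the statement you are trying to prove. Note also that Remark~\ref{rmk:gen-isomorphisms}(c) is only a forward pointer to that corollary, not an independently available fact at this stage.

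The paper avoids this by working directly with $\geqslant_{(g),b}$ on all of $G$: it builds $\overline\cP$ on $\overline G$ extending $\overline{\cX}(nZ)=\cP(n)$, normalizes the discrepancy function $\xi$ to $1$ so that the factor set is constant on $NZ$-cosets and $\cP'$ descends to $\overline\cP'$, and then, for every $NZ\le J\le G$, compares $\bl(\psi)$ and $\bl(\sigma_J(\psi))$ with the dominated blocks $\overline B$, $\overline b$ of $J/Z$ and $(J\cap H)/Z$, citing \cite[Prop.~2.4]{NS14} for $\overline b^{\overline J}=\overline B$ and \cite[Ch.~5, Thms.~8.8, 8.10]{NT89} for $\overline D=DZ/Z$; this is where the hypothesis that $Z$ is a $p'$-group or a central $p$-group enters. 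Two further points in your write-up need repair even if the circularity were resolved: the identity $\C_{\overline G}(\overline D)=\C_G(D)Z/Z$ does not follow from ``$Z$ is central'' or ``$Z$ is a $p'$-group''; it follows from $Z\unlhd G$ and $Z\cap N=1$ (for $g$ with $[g,D]\le Z$ one has $[g,D]\le N\cap Z=1$ since $D\le N\unlhd G$), and the same computation is what gives $\C_{\overline G}(\overline N)=\C_G(N)/Z$ for the central case. Your treatment of part (b) by splitting $Z=Z_pZ_{p'}$ and applying (a) twice agrees with the paper.
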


\begin{proof}
Suppose that $(G,N,\theta)\geqslant_{(g),*}(H,M,\vhi)$ is given by $(\cP,\cP')$.
Let $\al$ and $\al'$ be the factor sets of $\cP$ and $\cP'$ respectively and
let us denote by $\sigma$ the linear maps defined by $(\cP,\cP')$ as in
Theorem~\ref{thm:iso-char-triple}.
Let $\overline{\cX}$ be the linear $\CC$-representation of $\overline N$
defined by $\overline{\cX}(nZ)=\cP(n)$ for $n\in N$ so that $\overline{\cX}$
affords $\overline\theta$.
There exists a projective $\CC$-representation $\overline\cP$ of $\overline G$
associated with $(\overline G,\overline N,\overline \theta)$ such that
$\overline\cP(nZ)=\overline{\cX}(nZ)=\cP(n)$ for $n\in N$.
The map $\cD$ on $G$ defined by $\cD(g)=\overline\cP(gZ)$ for every $g\in G$ is
a projective $\CC$-representation with $\cD\rceil_N=\cP\rceil_N$.
So there exists a map $\xi\colon G/N\to\CC^\ti$ such that
$\overline\cP(gZ)=\xi(gN)\cP(g)$ for every $g\in G$.

The map $\xi\rceil_H\cP'$ is a projective representation of $H$ associated to
$(H,M,\vhi)$ as well. It can be checked that $(\xi\cP,\xi\rceil_H\cP')$ gives
$(G,N,\theta)\geqslant_{(g)}(H,M,\vhi)$ and the linear maps defined by
$(\cP,\cP')$ and by $(\xi\cP,\xi\rceil_H\cP')$ coincide as in the proof of
Lemma~\ref{lem:change-proj}. As $\cP(c)$ and $\cP'(c)$ are associated with the
same scalar for $c\in\C_G(N)$, the projective representations $\xi\cP$ and
$\xi\rceil_H\cP'$ have the same property. From this $(\xi\cP,\xi\rceil_H\cP')$
also gives $(G,N,\theta)\geqslant_{(g),*}(H,M,\vhi)$.
Thus without loss of generality, we assume that $\xi=1_G$.
Therefore, $\al(gx,g'x')=\al(g,g')$ for every $g,g'\in G$ and $x,x'\in NZ$.
Taking into account that $\al\rceil_{H\ti H}=\al'$, this implies that $\cP'$
defines uniquely a projective $\CC$-representation $\overline\cP'$ of
$\overline H$ as well. Since $Z\unlhd G$ and $N\cap Z=1$, we get that
$\C_{\overline G}(\overline N)=\C_G(N)/Z$. So $(\overline\cP,\overline\cP')$
gives $(\overline G,\overline N,\overline\theta)\geqslant_{(g),c}(\overline H,\overline M,\overline\vhi)$.

From now on we assume that $*=b$.
Denote by $\overline\sigma$ the linear maps defined by
$(\overline\cP,\overline\cP')$ as in Theorem~\ref{thm:iso-char-triple}.
Let $NZ\le J\le G$, $\overline J:=J/Z$ and $\overline\psi\in\Char(\overline J\mid\overline\theta)$.
Then by the definition of $\sigma$ and $\overline\sigma$ the character
$\overline\sigma_{\overline J}(\overline\psi)$ lifts to $\sigma_J(\psi)$, where
$\psi\in\Char(J\mid\theta)$ is the inflation of $\overline\psi$.
Suppose that $\overline\psi\in\Irr(\overline J\mid\overline\theta)$ and
$\overline\sigma_{\overline J}(\overline\psi)\in\Irr((J\cap H)/Z\mid\overline\vhi)$.
By assumption, $\bl(\psi)=\bl(\sigma_J(\psi))^J$.
Let $\overline B$ be the block of $\overline J$ contained in $\bl(\psi)$ and
$\overline b$ the block of $(J\cap H)/Z$ contained in $\bl(\sigma_J(\psi))$,
in the sense of \cite[p.~198]{Na98}. According to \cite[Prop.~2.4]{NS14} we
have $\overline b^{\overline J}=\overline B$.
Note that some defect group $D$ of $\bl(\psi)$ satisfies $\C_G(D)\le H$, and
from this $\overline D=DZ/Z$ is a defect group of $\overline b$ according to
Theorems~8.8 and~8.10 of Chapter~5 of \cite{NT89}.
Now $\C_{\overline G}(\overline D)=\C_G(D)/Z$ because $N\cap Z=1$ and
$Z\unlhd G$ and this gives $\C_{\overline G}(\overline D)\le\overline H$.
Since $\overline b=\bl(\overline\psi)$ and
$\overline B=\bl(\overline\sigma_{\overline J}(\overline\psi))$, this implies
that $(\overline\cP,\overline\cP')$ gives $(\overline G,\overline N,\overline \theta)\geqslant_{(g),b}(\overline H,\overline M,\overline \vhi)$.

To prove (b), we can apply part (a) to $G$ with respect to $Z_p$ and then
again to $G/Z_p$ with respect to $Z/Z_p$.
\end{proof}

\begin{defn}   \label{defn-normal-iso}
 Let $*\in\{\,c,b\,\}$. Suppose that $(G,N,\theta)\geqslant_{(g),*}(H,M,\vhi)$
 is given by $(\cP,\cP')$, and let $M\le H_0\unlhd H$.
 We say that \emph{$(G,N,\theta)\geqslant_{(g),*}(H,M,\vhi)$ is given by
 $(\cP,\cP')$ normally with respect to $H_0$} if for any linear character
 $\iota\in\Lin(H_0/M\C_{H_0}(N))$, there exists a projective
 $\CC$-representation $\cP''$ associated with $(H_\iota,M,\vhi)$ such that
 \begin{enumerate}[\rm(1)]
  \item $(\cP,\cP'')$ gives $(G,N,\theta)\geqslant_{(g),*}(H_\iota,M,\vhi)$, and
  \item $\cP''(h)=\iota(h)\cP'(h)$ for any $h\in H_0$.
\end{enumerate}

We say that \emph{$(G,N,\theta)\geqslant_{(g),*}(H,M,\vhi)$ is normal with
respect to $H_0$} if $(G,N,\theta)\geqslant_{(g),c}(H,M,\vhi)$ is given by some
projective representations $(\cP,\cP')$ normally with respect to $H_0$.
\end{defn}

\begin{rmk}
Suppose that $(G,N,\theta)\geqslant_{(g),*}(H,M,\vhi)$ is normal with respect
to $H_0$. Let $N\le G_1\le G$, $H_1=H\cap G_1$ such that $H_0\le H_1$. 
Then $(G_1,N,\theta)\geqslant_{(g),*}(H_1,M,\vhi)$ is also normal with respect
to $H_0$. 
\end{rmk}	

\subsection{A criterion for $\geqslant_{(g),b}$}
Now we study the relation $\geqslant_{(g),b}$ between character triples in more detail and derive a criterion that generalizes \cite[Thm.~4.3]{Sp17}.
The following lemma is similar to \cite[Lemma~4.2]{Sp17}.

\begin{lem}\label{lem:block-induced}
Let $N\unlhd G$, $H\le G$ and $M=N\cap H$.
Suppose that $\ttheta\in\Irr(G)$ and $\tvhi\in\Irr(H)$ with $\theta=\Res^G_N(\ttheta)\in\Irr(N)$ and $\vhi=\Res^H_M(\tvhi)\in\Irr(M)$ such that $\bl(\vhi)$ has a defect group $D$ with $\C_G(D)\le H$.
Let $\cP$ and $\cP'$ be linear $\CC$-representations of $G$ and $H$ affording $\ttheta$ and $\tvhi$ respectively.
\begin{enumerate}[\rm(a)]
\item For $x\in G$ and $J=\langle N,x\rangle$, $\cP(\Cl_J(x)^+)$ and $\cP'((\Cl_J(x)\cap H)^+)$ are scalar matrices associated to  elements of $\cO$. So there is some element in $\FF$ associated with $\cP(\Cl_J(x)^+)^*$ (or $\cP'((\Cl_J(x)\cap H)^+)^*$). Here, $\cO$ and $\FF$ are defined as in \S\ref{subsec:induced-blocks}.
\item If $x\in G\setminus G[\bl(\theta)]$ and $J=\langle N,x\rangle$, then $\cP(\Cl_J(x)^+)^*$ is the zero matrix.
\item If $x\in G\setminus G[\bl(\vhi)^N]$ and $J=\langle N,x\rangle$, then $\cP'((\Cl_J(x)\cap H)^+)^*$ is the zero matrix.
\item If $x\in G[\bl(\theta)]$, then for $J=\langle N,x\rangle$ the following statements are equivalent.
\begin{enumerate}[\rm(i)]
	\item $\bl(\Res^G_J(\ttheta))=\bl(\Res^H_{J\cap H}(\tvhi))^J$.
	\item The matrices $\cP(\Cl_J(y)^+)^*$ and $\cP'((\Cl_J(y)\cap H)^+)^*$ are associated with the same element in $\FF$ for every $y\in xN$.
\end{enumerate}
\end{enumerate}
\end{lem}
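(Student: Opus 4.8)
The plan is to get (a) from Schur's lemma together with the integrality of central characters, to get (b) and (c) directly from the defining property of Dade's ramification group, and to get (d) by translating the equality of induced blocks into an equality of central characters on $\Z(\FF J)$ and then propagating it from a single coset of $N$.

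Write $b=\bl(\theta)$, $b'=\bl(\vhi)$, and, for $N\le J\le G$, set $\ttheta_J=\Res^G_J(\ttheta)$ and $\tvhi_{J\cap H}=\Res^H_{J\cap H}(\tvhi)$. Since a character whose restriction to a normal subgroup is irreducible is itself irreducible, $\ttheta_J\in\Irr(J)$ and $\tvhi_{J\cap H}\in\Irr(J\cap H)$, so $\cP\rceil_J$ and $\cP'\rceil_{J\cap H}$ are irreducible. For (a), the group $J/N$ is cyclic, so $\Cl_J(x)\subseteq xN$; moreover $\Cl_J(x)$ is a $J$-class and $\Cl_J(x)\cap H=\Cl_J(x)\cap(J\cap H)$ is a union of $(J\cap H)$-classes, hence $\cP(\Cl_J(x)^+)$ commutes with $\cP(N)$ and $\cP'((\Cl_J(x)\cap H)^+)$ commutes with $\cP'(M)$. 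By Schur's lemma both matrices are scalar, and the scalars are $\ZZ$-linear combinations of central-character values of $\ttheta_J$, respectively of $\tvhi_{J\cap H}$, so they lie in $\cO$; applying the map ${}^*$ gives the asserted elements of $\FF$. Since $\bl(\tvhi_{J\cap H})^J$ is defined (see the remark after Definition~\ref{def-block-isomrophism}), the same computation together with Brauer's description of block induction gives, for every $g\in J$,
\[\cP(\Cl_J(g)^+)^*=\la_{\bl(\ttheta_J)}(\Cl_J(g)^+)\quad\text{and}\quad\cP'((\Cl_J(g)\cap H)^+)^*=\la_{\bl(\tvhi_{J\cap H})^J}(\Cl_J(g)^+);\]
these identifications are used throughout the rest of the proof.

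For (b): $\bl(\ttheta_J)$ is a block of $J=\langle N,x\rangle$ covering $b$, so if $\cP(\Cl_J(x)^+)^*=\la_{\bl(\ttheta_J)}(\Cl_J(x)^+)\neq0$, then by the definition of $G[b]$ recalled in~\S\ref{sec:Preli} the element $x$ is one of its generators, hence $x\in G[b]$; this is the contrapositive of (b). For (c): if $\Cl_J(x)\cap H=\emptyset$ there is nothing to prove, so pick $x'\in\Cl_J(x)\cap H$, whence $\langle N,x'\rangle=J$ and $\Cl_J(x')=\Cl_J(x)$. The block $\bl(\tvhi_{J\cap H})^J$ is defined and covers $b'^N$, which is itself defined since $\C_N(D)=\C_G(D)\cap N\le M$; here one uses the remark after Definition~\ref{def-block-isomrophism} and the compatibility of block induction with the covering relation (cf.~\cite{KS15}). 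Therefore, if $\cP'((\Cl_J(x)\cap H)^+)^*=\la_{\bl(\tvhi_{J\cap H})^J}(\Cl_J(x')^+)\neq0$, the definition of $G[b'^N]$ gives $x'\in G[b'^N]$, hence $x\in\langle N,x'\rangle=J\le G[b'^N]$; this is the contrapositive of (c).

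For (d), by the identifications above statement (i) is equivalent to saying that $\bl(\ttheta_J)$ and $\bl(\tvhi_{J\cap H})^J$ have the same central character on $\Z(\FF J)$, i.e.\ that $\cP(\Cl_J(g)^+)^*=\cP'((\Cl_J(g)\cap H)^+)^*$ for \emph{every} $g\in J$; in particular (i)$\Rightarrow$(ii) is immediate. To prove (ii)$\Rightarrow$(i) I would use the $\ZZ/d$-grading of the algebra $\Z(\FF J)$ coming from $J/N\cong\ZZ/d$, whose $i$-th homogeneous component is spanned by the $J$-class sums supported on $x^iN$; the $0$-th component is $\Z(\FF J)\cap\FF N$, and the first is spanned by the class sums $\Cl_J(y)^+$ with $y\in xN$. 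Both $\la_{\bl(\ttheta_J)}$ and $\la_{\bl(\tvhi_{J\cap H})^J}$ are $\FF$-algebra homomorphisms, and by (ii) they agree on the first component, hence on the subalgebra $\mathcal A$ it generates; also, because $\cP\rceil_N$ affords $\theta$ and $\cP'\rceil_M$ affords $\vhi$, their restrictions to $\Z(\FF J)\cap\FF N$ are $\la_b$ and $\la_{b'^N}$, respectively. Taking $d$-fold products of class sums from the first component lands in $\Z(\FF J)\cap\FF N$, and the hypothesis $x\in G[b]$ is exactly what forces $\mathcal A$ to contain the block idempotent $e_b$ (which, being $J$-invariant since $\theta$ is $G$-invariant, lies in the $0$-th component); evaluating the two central characters at $e_b$ then yields $b=b'^N$, so $\bl(\ttheta_J)$ and $\bl(\tvhi_{J\cap H})^J$ cover the same block of $N$. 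Arguing degree by degree along the grading, and again invoking $x\in G[b]$, one upgrades the agreement on $\mathcal A$ to agreement on all of $\Z(\FF J)$, which is (i). The step I expect to be the main obstacle is precisely this last use of $x\in G[b]$: extracting from the membership $x\in G[b]$ the quantitative assertion that the class sums contained in the single coset $xN$ generate, through the multiplication of $\Z(\FF J)$ and over $\Z(\FF J)\cap\FF N$, a subalgebra large enough to separate $\bl(\ttheta_J)$ from every other block of $J$ (in particular, to detect $e_b$). This is where the structure theory of Dade's ramification group \cite{Da73,Mu13,KS15,Sp17} must be used in an essential way; the remaining ingredients are only Schur's lemma, the definition of block induction, and the definition of $G[b]$.
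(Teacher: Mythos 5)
Parts (a), (b), (c) and the implication (i)$\Rightarrow$(ii) of (d) are correct and essentially follow the paper's route: Schur's lemma together with the integrality of central character values gives (a), and identifying the two scalars with $\la_{\bl(\Res^G_J(\ttheta))}(\Cl_J(x)^+)$ and, via Brauer's formula for induced blocks, with $\la_{\bl(\Res^H_{J\cap H}(\tvhi))^J}(\Cl_J(x)^+)$, combined with Murai's description of Dade's ramification group, yields (b), (c) and the easy direction of (d).

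The implication (ii)$\Rightarrow$(i) of (d), which is the substantive part of the lemma, is not actually proved. Your grading argument only shows that the two algebra homomorphisms $\la_1=\la_{\bl(\Res^G_J(\ttheta))}$ and $\la_2=\la_{\bl(\Res^H_{J\cap H}(\tvhi))^J}$ agree on the subalgebra of $\Z(\FF J)$ generated by the class sums supported on $xN$, and you yourself flag the missing step: that this subalgebra separates $\bl(\Res^G_J(\ttheta))$ from the other blocks of $J$. That step is the whole content of the implication, and it does not follow from the grading alone; products of degree-one class sums need not span the higher graded components of $\Z(\FF J)$, so agreement of two homomorphisms on degree one cannot be propagated ``degree by degree''. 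The paper closes exactly this gap with two external inputs. First, \cite[Thm.~B]{KS15} produces a linear character $\ze\in\Irr(J/N)$ with $\bl(\ze\Res^G_J(\ttheta))=\bl(\Res^H_{J\cap H}(\tvhi))^J$, which pins down the a priori relation between the two central characters: $\la_2(\Cl_J(y)^+)=\ze(y')^*\,\la_1(\Cl_J(y)^+)$ for all $y\in J$ and $y'\in yN$. Second, \cite[Cor.~3.3]{Mu13} converts the hypothesis $x\in G[\bl(\theta)]$ into the existence of some $y\in xN$ with $\la_1(\Cl_J(y)^+)\ne0$; hypothesis (ii) then forces $\ze(y)^*=1$, hence $\ze\in\Lin_p(J/N)$ because $yN$ generates the cyclic group $J/N$, and twisting by a linear character of $p$-power order does not change the block. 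Without an input of the strength of \cite[Thm.~B]{KS15}, agreement of $\la_1$ and $\la_2$ on a single graded piece is compatible with the two blocks being distinct, so the proposal has a genuine gap at its decisive point.
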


\begin{proof}
Let $x\in G$ and $J=\langle N,x\rangle$. Then $\Cl_J(x)\subseteq xN$ and thus $\Cl_J(x)\cap H$ is either empty or contained in $x'M$ for some $x'\in H\cap xN$.
Since $\cP\rceil_J$ is a linear representation of $J$, $\cP(\Cl_J(x)^+)$ is a scalar matrix associated with an algebraic integer (see e.g. \cite[\S3]{Is06}).
On the other hand, $\Cl_J(x)\cap H$ is a union of $(J\cap H)$-conjugacy classes of $J\cap H$, and then $\cP'((\Cl_J(x)\cap H)^+)$ is also a scalar matrix associated with an algebraic integer.
This proves (a).

Note that $\Res^G_J(\ttheta)$ is afforded by $\cP\rceil_J$.
Then $\la_{\Res^G_J(\ttheta)}(\Cl_J(x)^+)$ is the scalar associated with $\cP(\Cl_J(x)^+)^*$.
Analogously, $\la_{\Res^H_{J\cap H}(\tvhi)}((\Cl_J(x)\cap H)^+)$ is the scalar associated with $\cP'((\Cl_J(x)\cap H)^+)$.
Thus (b) and (c) follow by the definition of Dade's ramification groups (cf. \cite{Mu13} or \cite[Prop.~2.5(b)]{NS14}).

Assume now $x\in G[\bl(\theta)]$.
The equality $\bl(\Res^G_J(\ttheta))=\bl(\Res^H_{J\cap H}(\tvhi))^J$ implies that \[\la_{\Res^G_J(\ttheta)}(\Cl_J(y)^+)=\la_{\Res^H_{J\cap H}(\tvhi)}((\Cl_J(y)\cap H)^+)\] for every $y\in J$.
This shows that (i) implies (ii) in part (d).

Finally we prove that (ii) implies (i) in part (d).
Using the assumption (ii), if we let $x\in N$, then we get $\bl(\vhi)^N=\bl(\theta)$.
By \cite[Thm.~B]{KS15} there exists $\ze\in\Irr(J/N)$ such that \[\bl(\ze\Res^G_J(\ttheta))=\bl(\Res^H_{J\cap H}(\tvhi))^J.\] Since $J/N$ is cyclic, $\ze$ is a linear character.
So \[\ze(y')^*\la_{\Res^G_J(\ttheta)}(\Cl_J(y)^+)=\la_{\Res^H_{J\cap H}(\tvhi)}((\Cl_J(y)\cap H)^+)\] for every $y\in J$ and $y'\in yN$.
By \cite[Cor.~3.3]{Mu13}, there exists $y\in xN$ with $\la_{\Res^G_J(\ttheta)}(\Cl_J(y)^+)\ne 0$.
Then by the assumption (ii) the matrices $\cP(\Cl_J(y)^+)^*$ and $\cP'((\Cl_J(y)\cap H)^+)^*$ are nonzero and associated with the same element in $\FF$. 
Hence $\ze(y)^*=1$, which implies that $\ze\in\Lin_p(J/N)$.
Accordingly, $\bl(\ze\Res^G_J(\ttheta))=\bl(\Res^G_J(\ttheta))$ and this completes the proof.
\end{proof}

\begin{thm}\label{thm:criterion-geqslant_{(g),b}}
	Suppose that $(G,N,\theta)\geqslant_{(g),c}(H,M,\vhi)$ is given by $(\cP,\cP')$. 
	Assume further that a defect group $D$ of $\bl(\vhi)$ satisfies $\C_G(D)\le H$.
		Then the following two statements are equivalent.
	\begin{enumerate}[\rm(a)]
		\item For every $x\in G[\bl(\theta)]$ the matrices $\cP(\Cl_{\langle N,x\rangle}(x)^+)^*$ and $\cP'((\Cl_{\langle N,x\rangle}(x)\cap H)^+)^*$ are associated with the same scalar in $\FF$.
		\item $(\cP,\cP')$  gives $(G,N,\theta)\geqslant_{(g),b}(H,M,\vhi)$.
	\end{enumerate}
\end{thm}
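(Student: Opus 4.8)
The plan is to deduce both implications by restricting to intermediate subgroups $N\le L\le G$ with $L/N$ cyclic, over which $\cP$ linearises. Since $\geqslant_{(g),c}$ is assumed, only the block condition~(2) of Definition~\ref{def-block-isomrophism} needs to be addressed. For a cyclic $L/N$ the factor set of $\cP\rceil_L$ is a coboundary, so (after arranging that $\cP$ has a factor set of finite order) one may write $\cP\rceil_L=\mu\cQ$ and $\cP'\rceil_{L\cap H}=(\mu\rceil_{L\cap H})\cQ'$ with $\mu$ taking root-of-unity values, $\mu\rceil_N=1$, and $\cQ,\cQ'$ linear representations affording extensions $\ttheta_L$ of $\theta$ to $L$ and $\tvhi_L$ of $\vhi$ to $L\cap H$; these linearisations can be chosen compatibly for $N\le J\le L$ (take $\mu\rceil_J$), the pair $(\cQ,\cQ')$ gives $(L,N,\theta)\geqslant_{(g),c}(L\cap H,M,\vhi)$, and the map $\sigma_L$ attached to $(\cP,\cP')$ carries $\ttheta_L$ to $\tvhi_L$. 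As $\mu$ is constant on each coset of $N$ in $L$ and $\Cl_L(y)$ lies in one such coset, one has $\cQ(\Cl_L(y)^+)=\mu(y)^{-1}\cP(\Cl_L(y)^+)$ and $\cQ'((\Cl_L(y)\cap H)^+)=\mu(y)^{-1}\cP'((\Cl_L(y)\cap H)^+)$ for $y\in L$, with $\mu(y)^*\in\FF^\ti$; hence the matrices $\cP(\Cl_L(y)^+)^*$ and $\cP'((\Cl_L(y)\cap H)^+)^*$ are associated with a common scalar of $\FF$ exactly when $\la_{\ttheta_L}(\Cl_L(y)^+)=\la_{\tvhi_L}((\Cl_L(y)\cap H)^+)$, and by the formula for the central character of an induced block this, holding for all $y\in L$, is equivalent to $\bl(\ttheta_L)=\bl(\tvhi_L)^L$ (the induced block being defined because $\C_L(D)\le L\cap H$).

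Granting this reformulation, (b)$\Rightarrow$(a) is immediate: for $x\in G[\bl(\theta)]$ and $L=\langle N,x\rangle$, the relation $\geqslant_{(g),b}$ applied to $\psi=\ttheta_L$ (so that $\sigma_L(\psi)=\tvhi_L$) gives $\bl(\ttheta_L)=\bl(\tvhi_L)^L$, hence the matrix condition at $y=x$, which is~(a). For (a)$\Rightarrow$(b) I would first note that (a) evaluated at $x\in N$ gives $\bl(\theta)=\bl(\vhi)^N$. For a cyclic $L/N$ and $y\in L$ with $y\notin G[\bl(\theta)]$, every $\langle N,y\rangle$-class inside $\Cl_L(y)$ is represented by an element outside $G[\bl(\theta)]$, hence outside $\langle N,y\rangle[\bl(\theta)]\le G[\bl(\theta)]$, so Lemma~\ref{lem:block-induced}(b),(c) (applied to the linearisations over $\langle N,y\rangle$, using $\bl(\theta)=\bl(\vhi)^N$) forces $\cP(\Cl_L(y)^+)^*=\cP'((\Cl_L(y)\cap H)^+)^*=0$. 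Together with (a), the matrix condition therefore holds for every $y\in L$ and every cyclic $L/N$, so by the first paragraph $\bl(\ttheta_L)=\bl(\tvhi_L)^L$ for every cyclic $L/N$. Applying Proposition~\ref{prop:induced-extensions} inside $L$---its hypotheses are these equalities for the subgroups of $L$, the relevant extensions being the compatibly chosen $\ttheta_J=\Res^L_J\ttheta_L$---and using Theorem~\ref{thm:iso-char-triple}(b) to identify $\sigma_L(\ttheta_L\eta)$ with $\tvhi_L\,\Res^L_{L\cap H}\eta$, one upgrades this to $\bl(\chi)=\bl(\sigma_L(\chi))^L$ for \emph{every} extension $\chi$ of $\theta$ to $L$.

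It then remains to show, for arbitrary $N\le J\le G$ and $\psi\in\Irr(J\mid\theta)$ with $\psi':=\sigma_J(\psi)\in\Irr(J\cap H\mid\vhi)$, that $\la_\psi(\Cl_J(y)^+)=\la_{\psi'}((\Cl_J(y)\cap H)^+)$ for all $y\in J$---this gives $\bl(\psi)=\bl(\psi')^J$, and hence~(b) once one invokes condition~(1) of Definition~\ref{def-block-isomrophism}, which is exactly the standing hypothesis $\C_G(D)\le H$. Put $L=\langle N,y\rangle$. Then $\Cl_J(y)\subseteq yN\subseteq L$ is a union of classes of $L$, $\Cl_J(y)\cap H\subseteq L\cap H$ is a union of classes of $L\cap H$, the constituents $\chi_i$ of $\Res^J_L\psi$ are extensions of $\theta$ to $L$ with $\la_{\chi_i}(\Cl_J(y)^+)=\la_\psi(\Cl_J(y)^+)$, and by Theorem~\ref{thm:iso-char-triple}(a) the $\sigma_L(\chi_i)$ are the constituents of $\Res^{J\cap H}_{L\cap H}\psi'$ with $\la_{\sigma_L(\chi_i)}((\Cl_J(y)\cap H)^+)=\la_{\psi'}((\Cl_J(y)\cap H)^+)$. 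Since $\bl(\chi_i)=\bl(\sigma_L(\chi_i))^L$ by the previous step, summing the central-character identity over the $L$-classes in $\Cl_J(y)$ shows that all three scalars coincide, which is what was wanted.

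The main technical point I expect is arranging the linearisations of $\cP$ over the various cyclic-quotient subgroups compatibly---so that restrictions of the chosen extensions are again the chosen ones, which is what makes Proposition~\ref{prop:induced-extensions} usable inside each $L$---together with verifying that every induced block appearing ($\bl(\tvhi_L)^L$, $\bl(\sigma_L(\chi))^L$, $\bl(\psi')^J$) is defined; the latter uses the defect-group hypothesis $\C_G(D)\le H$ throughout, via the fact (noted after Definition~\ref{def-block-isomrophism}) that blocks covering $\bl(\vhi)$ have defect groups containing $D$.
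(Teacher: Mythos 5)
Your direction (b)$\Rightarrow$(a) and your treatment of the cyclic case of (a)$\Rightarrow$(b) are sound and close to the paper's own argument: the paper likewise linearises $\cP$ over each $\langle N,x\rangle$ by tensoring with a one-dimensional projective representation $\cQ$ with $\cQ(x)^*\ne 0$, and reduces to Lemma~\ref{lem:block-induced}. The genuine gap is in your final step, the passage from cyclic $J/N$ to arbitrary $N\le J\le G$. You set $L=\langle N,y\rangle$ and assert $\Cl_J(y)\subseteq yN$; this holds only when $yN$ is central in $J/N$. In general $\Cl_J(y)$ surjects onto the full $J/N$-class of $yN$ and meets several cosets of $N$, so it is neither contained in $L$ nor a union of $L$-classes, and the quantity $\la_{\chi_i}(\Cl_J(y)^+)$ for a constituent $\chi_i$ of $\Res^J_L\psi$ is then not even defined. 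Even on the single coset $\Cl_J(y)\cap yN$, the asserted identity $\la_{\chi_i}(\Cl_J(y)^+)=\la_\psi(\Cl_J(y)^+)$ is unjustified: comparing $\la_\psi$ on a $J$-class sum with central characters of subgroups meeting only one $N$-coset is exactly the content of the proof of Proposition~\ref{prop:induced-extensions}, which in addition requires the case distinction between $\Cl_{\overline G}(\overline x)\subseteq\overline H$ and $\Cl_{\overline G}(\overline x)\nsubseteq\overline H$ that your sketch omits (cosets of $N$ meeting $\Cl_J(y)$ but not $H$ must be shown to contribute $0$ on both sides).

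The reason you cannot instead invoke Proposition~\ref{prop:induced-extensions} directly with $G:=J$ is that it requires global extensions $\ttheta\in\Irr(J)$ of $\theta$ and $\tvhi\in\Irr(J\cap H)$ of $\vhi$, i.e.\ the factor set of $\cP\rceil_J$ must be a coboundary; for non-cyclic $J/N$ this fails in general, and then $\Irr(J\mid\theta)$ contains no extensions at all. The paper resolves precisely this obstruction by passing to the central extension $\hG$ built from the factor set $\al$: there $\cP$ and $\cP'$ lift to linear representations $\cD$, $\cD'$ affording global extensions $\ttheta_0$, $\tvhi_0$, so Proposition~\ref{prop:induced-extensions} applies to every $J_0$; the remaining cyclic-quotient input is supplied by Lemma~\ref{lem:block-induced}(d) together with the identification $\cD(\Cl_{\langle N_0,y_0\rangle}(y_0)^+)=z\,\cP(\Cl_J(x)^+)$, and one descends to $G$ via Proposition~\ref{prop:going-to-central-quotient}. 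Your cyclic-case analysis would slot into that framework, but some such global linearisation is needed; restriction to $\langle N,y\rangle$ alone does not suffice.
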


\begin{proof}	
Let $\al$ and $\al'$ be the factor sets of $\cP$ and $\cP'$ respectively and let us denote by $\sigma$ the linear maps defined by $(\cP,\cP')$ as in Theorem~\ref{thm:iso-char-triple}.	

First we prove that (b) implies (a). We have $\bl(\theta)=\bl(\vhi)^{N}$ by (b).
Let $x\in G[b]$ and $J=\langle N,x\rangle$ and let $\ttheta\in\Irr(J\mid\theta)$.
Then $\ttheta$ is an extension of $\theta$ and $\tvhi:=\sigma_J(\ttheta)\in\Irr(J\cap H\mid\vhi)$ is an extension of $\vhi$ (by Theorem~\ref{thm:iso-char-triple}(e)) since $J/N$ is cyclic.
Let $\cQ$ be a projective $\CC$-representation of $J/N$ such that $\cQ\otimes\cP\rceil_J$ affords $\ttheta$.
Then $\tvhi$ is afforded by $\cQ\rceil_{J\cap H}\otimes\cP'\rceil_{J\cap H}$.
Note that $\cQ$ is a one-dimensional projective representation with $\cQ(x)^*\ne 0$.
Since $\Cl_J(x)\subseteq xN$, $\bl(\ttheta)=\bl(\tvhi)^J$ implies according to Lemma~\ref{lem:block-induced} that the matrices \[(\cQ\otimes\cP\rceil_J)(\Cl_J(x)^+)^*=\cQ(x)^*\cP(\Cl_J(x)^+)^*\] and \[(\cQ\rceil_{J\cap H}\otimes\cP'\rceil_{J\cap H})((\Cl_J(x)\cap H)^+)^*=\cQ(x)^*\cP'((\Cl_J(x)\cap H)^+)^*\] are scalar matrices associated with the same element in $\FF$.
This implies (a).

Now we prove that (a) implies (b).	
For the character triple $(G,N,\theta)$ we can define a group $\hG$ with a surjective group homomorphism $\eps\colon\hG\to G$. We first recall the construction (see e.g. \cite[\S5.3]{Na18}).
Fix a finite subgroup $Z$ of $\CC^\ti$ that contains all the values of the factor set $\al$.
Let $\hG=\{\,(g,z)\mid g\in G,z\in Z\,\}$ where multiplication is defined by \[(g_1,z_1)(g_2,z_2)=(g_1g_2,\al(g_1,g_2)z_1z_2)\]
for $g_1,g_2\in G$ and $z_1,z_2\in Z$.
Then $\eps\colon\hG\to G$ given by $(g,z)\mapsto g$ is an epimorphism with kernel $1\ti Z\le\Z(\hG)$, which we will identify with $Z$. 
The group $N_0=N\ti 1$ is a normal subgroup of $\hG$ that is isomorphic to $N$ via $\eps\rceil_{N_0}$.
The action of $\hG$ on $N_0$ coincides with the action of $G$ on $N$ via~$\eps$.
Let $\theta_0=\theta\circ\eps\rceil_{N_0}\in\Irr(N_0)$.
Define a map $\mathrm{rep}\colon G\to\hG$ by $g\mapsto (g,1)$. Then $\eps\circ\mathrm{rep}=\mathrm{id}_G$ and $\mathrm{rep}(1)=1$. In addition, $\mathrm{rep}(n)\in N_0$ and $\mathrm{rep}(ng)=\mathrm{rep}(n)\mathrm{rep}(g)$ for $n\in N$ and $g\in G$.

The map $\cD$ defined on $\hG$ by $\cD((g,z))=z\cP(g)$ for every $z\in Z$ and $g\in G$ is an irreducible linear $\CC$-representation of $\hG$.
In addition, $\cD(\mathrm{rep}(g))=\cP(g)$ for every $g\in G$.
Let $\ttheta_0\in\Irr(\hG\mid\theta_0)$ be the character afforded by $\cD$.
Then $\ttheta_0$ is an extension of $\theta_0$ to $\hG$ and the unique irreducible constituent $\nu$ of $\Res^{\hG}_{Z}(\ttheta_0)$ is faithful.

For any subgroup $U\le G$, let us write \[\widehat U:=\eps^{-1}(U)=\{(u,z)\mid u\in U,z\in Z\}\le\hG.\]
If $c\in\C_G(N)$, then $\cP(c)$ is a scalar matrix and from this $\cP(n^{-1}cn)=\cP(c)$ for $n\in N$.
Thus one get $\eps(\C_{\hG}(\widehat U))=\C_G(U)$ for $N\le U\le G$.

As in \cite[Thm.~4.1]{NS14}, $\widehat M=M_0\ti Z$ where $M_0:=N_0\cap\widehat{M}$.
Let $\vhi_0=\vhi\circ\eps\rceil_{M_0}$.
Since $\al\rceil_{H\ti H}=\al'$, the values of $\al'$ are also contained in $Z$.
Therefore, the subgroup $\widehat H$ of $\hG$ provides a central extension of $H$.
As above, the map $\cD'$ defined on $\widehat H$ by $\cD'((h,z))=z\cP'(h)$ for every $z\in Z$ and $h\in H$ is an irreducible linear $\CC$-representation $\cD'$ of $\widehat H$ with $\cD'(\mathrm{rep}(h))=\cP'(h)$ for $h\in H$, and $\cD'$ affords an extension $\tvhi_0\in\Irr(\widehat H)$ of $\vhi_0$.
In addition, $\Irr(Z\mid\tvhi_0)=\{\nu\}$.
Thus it can be checked that $(\cD,\cD')$ gives \[(\hG,N_0,\vhi_0)\geqslant_{(g),c}(\widehat H,M_0,\vhi_0).\]
Let us denote by $\widehat\sigma$ the linear maps defined by $(\cD,\cD')$ as in Theorem~\ref{thm:iso-char-triple}.	
Then for $N\le J\le G$ and $\psi\in\Char(J\mid\theta)$ we have $\sigma_J(\psi)\circ\eps=\widehat\sigma_{\widehat J}(\psi\circ\eps\rceil_{\widehat J}).$
By Proposition~\ref{prop:going-to-central-quotient} and its proof, to prove $(\cP,\cP')$  gives $(G,N,\theta)\geqslant_{(g),b}(H,M,\vhi)$, it suffices to show that $(\cD,\cD')$ gives \[(\hG,N_0,\vhi_0)\geqslant_{(g),b}(\widehat H,M_0,\vhi_0).\]

Let $D_0=M_0\cap\widehat D$.
Since the action of $\hG$ on $N_0$ coincide with the action of $G$ on $N$ via~$\eps$, one has $\C_{\widehat G}(D_0)\le\widehat H$.
So it suffices to verify condition (2) of Definition~\ref{def-block-isomrophism}, and then by Proposition~\ref{prop:induced-extensions}, 
it remains to show \[\bl(\Res^{\hG}_{J_0}(\ttheta_0))=\bl(\Res^{\widehat H}_{J_0\cap\widehat H}(\tvhi_0))^J\] for every $N_0\le J_0\le \hG$ with cyclic $J_0/N_0$.
If we prove that $\cD(\Cl_{J_0}(y_0)^+)^*$ and $\cD'((\Cl_{J_0}(y_0)\cap \widehat H)^+)^*$ are associated with the same scalar in $\FF$ for every $y_0\in\hG[\bl(\theta_0)]$ where $J_0=\langle N_0,y_0\rangle$, then by Lemma~\ref{lem:block-induced} this theorem follows.

By (a) for elements of $N$, we get $\bl(\theta)=\bl(\vhi)^N$. 
Let $x\in G$ and $J=\langle N,x\rangle$.
By Lemma~\ref{lem:block-induced}, $\cP(\Cl_J(x)^+)^*$ and $\cP'((\Cl_J(x)\cap H)^+)^*$ are associated with the same scalar in $\FF$. 
Define for the element $x$ the map \[\mathcal L_x\colon N\to N\quad \textrm{by}\quad n\mapsto x^{-1}n^{-1}xn.\]
Then $\mathcal L_x$ depends on the automorphism of $N$ induced by~$x$ and $\Cl_J(x)=\{xl\mid l\in\mathcal L_x(N)\}$.
Let $x_0=(x,1)$ and $J_0=\langle N_0,x_0\rangle$ and define $\mathcal L_{x_0}\colon N_0\to N_0$ by $n\mapsto y^{-1}n^{-1}yn$.
Then \[\Cl_{J_0}(x_0)=\{x_0l\mid l\in\mathcal L_{x_0}(N_0) \}.\]

Note that the action of $x_0$ on $N_0$ coincides with the one of $x$ on $N$.
Therefore, \[\cD(\Cl_{J_0}(x_0)^+)=\sum\limits_{l\in\mathcal L_{x_0}(N_0)}\cD(x_0l)=\sum\limits_{l\in\mathcal L_{x}(N)}\cP(xl)=\cP(\Cl_{J}(x)^+)\] and analogously (see also \cite[p.~1088]{Sp17}) \[\cD'((\Cl_{J_0}(x_0)\cap \widehat H)^+)=\cP'((\Cl_{J}(x)\cap H)^+).\]

Finally, let  $y_0=(x,z)$. Then $\Cl_{\langle N_0,y_0\rangle}(y_0)=\{\, k(1,z)\mid k\in \Cl_{J_0}(x_0) \,\}$.
From this, \[\cD(\Cl_{\langle N_0,y_0\rangle}(y_0)^+)=z\cP(\Cl_{J}(x)^+)\] and \[\cD'((\Cl_{\langle N_0,y_0\rangle}(y_0)\cap \widehat H)^+)=z\cP'((\Cl_{J}(x)\cap H)^+).\]
Thus
$\cD(\Cl_{\langle N_0,y_0\rangle}(y_0)^+)^*$ and $\cD'((\Cl_{\langle N_0,y_0\rangle}(y_0)\cap \widehat H)^+)^*$ are associated with the same element in $\FF$ and we complete the proof.
\end{proof}

\begin{cor}\label{rmk:gen-isomorphisms-b} 
	Suppose that $(G,N,\theta)$ and $(H,M,\vhi)$ are two character triples with $H\le G$ and $M=N\cap H$.
	Assume further that a defect group $D$ of $\bl(\vhi)$ satisfies $\C_G(D)\le H$.	
	Let $\cP$ and $\cP'$ be projective $\CC$-representations associated with  $(G,N,\theta)$ and $(H,M,\vhi)$ respectively. 
Then $(\cP,\cP')$ gives $(G,N,\theta)\geqslant_{(g),b}(H,M,\vhi)$ if and only if  $(\cP\rceil_{NH},\cP')$ gives
$(NH,N,\theta)\geqslant_b(H,M,\vhi)$.
\end{cor}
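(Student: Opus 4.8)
The plan is to show the two implications separately, using the general machinery already developed. First I would prove the ``only if'' direction. Suppose $(\cP,\cP')$ gives $(G,N,\theta)\geqslant_{(g),b}(H,M,\vhi)$. By Remark~\ref{rmk:gen-isomorphisms}(a), $(\cP\rceil_{NH},\cP')$ gives $(NH,N,\theta)\geqslant(H,M,\vhi)$, and since $\C_G(N)\le\C_G(D)\le H\cap NH$, the central condition passes down as well (Remark~\ref{rmk:gen-isomorphisms}(b)), so $(\cP\rceil_{NH},\cP')$ gives $(NH,N,\theta)\geqslant_c(H,M,\vhi)$. It remains to check condition~(2) of Definition~\ref{def-block-isomrophism} for the pair $(\cP\rceil_{NH},\cP')$: for every $N\le J\le NH$ and every $\psi\in\Irr(J\mid\theta)$ with $\sigma_J(\psi)\in\Irr(J\cap H\mid\vhi)$ we need $\bl(\psi)=\bl(\sigma_J(\psi))^J$. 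But such a $J$ is also an intermediate subgroup of $G$ with $N\le J\le G$, the linear maps $\sigma_J$ defined by $(\cP\rceil_{NH},\cP')$ and by $(\cP,\cP')$ coincide on $\Char(J\mid\theta)$ (both are described by the same formula in Theorem~\ref{thm:iso-char-triple}), and the block equality is exactly what condition~(2) of Definition~\ref{def-block-isomrophism} for $(G,N,\theta)\geqslant_{(g),b}(H,M,\vhi)$ asserts. Hence $(\cP\rceil_{NH},\cP')$ gives $(NH,N,\theta)\geqslant_b(H,M,\vhi)$.

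For the ``if'' direction, suppose $(\cP\rceil_{NH},\cP')$ gives $(NH,N,\theta)\geqslant_b(H,M,\vhi)$. Again by Remark~\ref{rmk:gen-isomorphisms}(a),(b), $(\cP,\cP')$ gives $(G,N,\theta)\geqslant_{(g),c}(H,M,\vhi)$, so we are in the setting of Theorem~\ref{thm:criterion-geqslant_{(g),b}}, and it suffices to verify condition~(a) of that theorem: for every $x\in G[\bl(\theta)]$, the matrices $\cP(\Cl_{\langle N,x\rangle}(x)^+)^*$ and $\cP'((\Cl_{\langle N,x\rangle}(x)\cap H)^+)^*$ are associated with the same scalar in $\FF$. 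The key observation is that Dade's ramification group $G[\bl(\theta)]$ is generated by $N$ together with elements $g$ for which some block of $\langle N,g\rangle$ covering $\bl(\theta)$ has nonzero central character on $\Cl_{\langle N,g\rangle}(g)^+$; in particular $G[\bl(\theta)]\le N\C_G(D')$ for a defect group $D'$ of $\bl(\theta)$, and since $\bl(\theta)=\bl(\vhi)^N$ (which holds because $(NH,N,\theta)\geqslant_b(H,M,\vhi)$ forces $\bl(\theta)=\bl(\vhi)^N$) we may take $D\le D'$ and conclude $G[\bl(\theta)]\le N\C_G(D)\le NH$. So every $x\in G[\bl(\theta)]$ can be written $x=nh$ with $n\in N$, $h\in H\cap G[\bl(\theta)]$; replacing $x$ by a suitable $N$-conjugate (which, by Lemma~\ref{lem:central-char}(a) and the definitions, does not change either matrix up to the same scalar) we may arrange $x\in H$.

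Now with $x\in H\cap G[\bl(\theta)]$ and $J=\langle N,x\rangle$, set $J'=J\cap H=\langle M,x\rangle$. Applying condition~(2) of Definition~\ref{def-block-isomrophism} for $(NH,N,\theta)\geqslant_b(H,M,\vhi)$ to the intermediate subgroup $J\le NH$: pick any extension $\ttheta\in\Irr(J\mid\theta)$ of $\theta$ (possible since $J/N$ is cyclic), and let $\tvhi=\sigma_J(\ttheta)\in\Irr(J'\mid\vhi)$, which is an extension of $\vhi$ by Theorem~\ref{thm:iso-char-triple}(e); then $\bl(\ttheta)=\bl(\tvhi)^{J}$. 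Writing $\ttheta$ as afforded by $\cQ\otimes\cP\rceil_J$ for a one-dimensional projective representation $\cQ$ of $J/N$, so that $\tvhi$ is afforded by $\cQ\rceil_{J'}\otimes\cP'\rceil_{J'}$, Lemma~\ref{lem:block-induced}(d) gives that $\cQ(x)^*\cP(\Cl_J(x)^+)^*$ and $\cQ(x)^*\cP'((\Cl_J(x)\cap H)^+)^*$ are associated with the same element of $\FF$ for every $y\in xN$. Since $x\in G[\bl(\theta)]$, Lemma~\ref{lem:block-induced}(c)--(d) combined with \cite[Cor.~3.3]{Mu13} lets us pick $y\in xN$ with these matrices nonzero, whence $\cQ(x)^*$ is a unit and can be cancelled, yielding condition~(a) of Theorem~\ref{thm:criterion-geqslant_{(g),b}}. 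Therefore $(\cP,\cP')$ gives $(G,N,\theta)\geqslant_{(g),b}(H,M,\vhi)$.

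I expect the main obstacle to be the bookkeeping in the ``if'' direction: specifically, verifying carefully that the containment $G[\bl(\theta)]\le NH$ holds under the stated defect-group hypothesis and that one may reduce to $x\in H$ without affecting the relevant scalars, together with tracking the auxiliary one-dimensional twist $\cQ$ through Lemma~\ref{lem:block-induced}. The rest is a formal matching of definitions via Remark~\ref{rmk:gen-isomorphisms} and the already-proved criterion Theorem~\ref{thm:criterion-geqslant_{(g),b}}.
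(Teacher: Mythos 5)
Your overall strategy matches the paper's: establish $\bl(\vhi)^N=\bl(\theta)$, deduce from the defect group hypothesis that $\C_G(N)\le H$ and $G[\bl(\theta)]\le N\C_G(Q)\le NH$ for a defect group $Q\ge D$ of $\bl(\theta)$, and then transfer the scalar criterion of Theorem~\ref{thm:criterion-geqslant_{(g),b}} between the two relations. The ``only if'' direction is fine, and the paper itself disposes of both directions in one line by observing that, once $G[\bl(\theta)]\le NH$ (so that $G[\bl(\theta)]=NH[\bl(\theta)]$), condition (a) of Theorem~\ref{thm:criterion-geqslant_{(g),b}} for $(\cP,\cP')$ involves only $\cP\rceil_{NH}$ and is literally the same condition as for $(\cP\rceil_{NH},\cP')$; combined with Remark~\ref{rmk:gen-isomorphisms}(b), the two sides of the equivalence are criterion-equivalent.

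There is, however, a genuine flaw in your ``if'' direction: the claim that any $x\in G[\bl(\theta)]\le NH$ may be replaced by an element of $H$ ``by a suitable $N$-conjugation''. Writing $x=nh$ with $n\in N$ and $h\in H$ only places $x$ and $h$ in the same coset $xN=hN$; they need not be $N$-conjugate, and $\Cl_{\langle N,x\rangle}(x)$ and $\Cl_{\langle N,x\rangle}(h)$ are in general distinct classes inside that coset, carrying different scalars (roughly $\ttheta(x)$ versus $\ttheta(h)$). Lemma~\ref{lem:central-char}(a) compares genuine conjugates and does not help here, and condition (a) of Theorem~\ref{thm:criterion-geqslant_{(g),b}} must be verified for \emph{every} element of the subgroup $G[\bl(\theta)]$, not for one representative per $N$-coset. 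The good news is that this reduction is unnecessary: your subsequent computation with $\ttheta\in\Irr(J\mid\theta)$, $\tvhi=\sigma_J(\ttheta)$ and Lemma~\ref{lem:block-induced}(d) goes through verbatim for an arbitrary $x\in NH$ (one just works with $J\cap H$ rather than $\langle M,x\rangle$, which is all that Lemma~\ref{lem:block-induced} requires) — indeed that computation is precisely the proof of the implication (b)$\Rightarrow$(a) in Theorem~\ref{thm:criterion-geqslant_{(g),b}} applied to the ambient group $NH$, so you could instead simply quote that implication for $(NH,N,\theta)\geqslant_b(H,M,\vhi)$ to obtain the scalar condition for all $x\in NH[\bl(\theta)]=G[\bl(\theta)]$, and then apply (a)$\Rightarrow$(b) for $(G,N,\theta)$. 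Deleting the conjugation step and making one of these two repairs yields a correct proof.
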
	

\begin{proof}
We assume that $\bl(\vhi)^N=\bl(\theta)$ since this is implied by either $(G,N,\theta)\geqslant_{(g),b}(H,M,\vhi)$ or $(NH,N,\theta)\geqslant_b(H,M,\vhi)$.
Then there exists a defect group $Q$ of $\bl(\theta)$ with $D\le Q$, and accordingly \[\C_G(N)\le\C_G(Q)\le\C_G(D)\le H.\]
By \cite[Cor.~12.6]{Da73} (see also \cite[Prop.~2.5(a)]{Sp17}), $G[\bl(\theta)]\le N\C_G(Q)\le NH$.
Thus this assertion follows by Remark~\ref{rmk:gen-isomorphisms}(b) and Theorem~\ref{thm:criterion-geqslant_{(g),b}}.
\end{proof}

\section{Construction of $\geqslant_{(g),c}$ and $\geqslant_{(g),b}$}\label{sec:construction}

Next, we consider how we can obtain new pairs of character triples under the relation $\geqslant_{(g),c}$ and $\geqslant_{(g),b}$ by using direct products and wreath products. We also show that the relation between character triples only depends on the automorphisms induced. These statement generalizes the theorems in \cite[\S5]{Sp17}.

First we consider character triples coming from direct products.

\begin{thm}\label{thm:direct-prod}
\begin{enumerate}[\rm(a)]
\item	Suppose that $(G_i,N_i,\theta_i)\geqslant_{(g),*}(H_i,M_i,\vhi_i)$ for $i=1,2$ and $*\in\{c,b\}$.
	Then \[(G_1\ti G_2,N_1\ti N_2,\theta_1\ti\theta_2)\geqslant_{(g),*}(H_1\ti H_2,M_1\ti M_2,\vhi_1\ti\vhi_2).\]
\item If furthermore $(G_i,N_i,\theta_i)\geqslant_{(g),*}(H_i,M_i,\vhi_i)$ is normal with respect to $H_{i,0}$ for $i=1,2$ and $*\in\{c,b\}$, then 
 \[(G_1\ti G_2,N_1\ti N_2,\theta_1\ti\theta_2)\geqslant_{(g),*}(H_1\ti H_2,M_1\ti M_2,\vhi_1\ti\vhi_2)\] is normal with respect to $H_{1,0}\ti H_{2,0}$.
\end{enumerate}
\end{thm}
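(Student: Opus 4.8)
The plan is to prove Theorem~\ref{thm:direct-prod} by first constructing the projective representations for the product triples from those of the factors, and then checking the defining conditions of $\geqslant_{(g),*}$ componentwise. Concretely, suppose that $(G_i,N_i,\theta_i)\geqslant_{(g),*}(H_i,M_i,\vhi_i)$ is given by $(\cP_i,\cP_i')$ with factor sets $\al_i$ and $\al_i'$, where $\al_i\rceil_{H_i\ti H_i}=\al_i'$. I would form the outer tensor products $\cP:=\cP_1\otimes\cP_2$ on $G_1\ti G_2$ and $\cP':=\cP_1'\otimes\cP_2'$ on $H_1\ti H_2$; these are projective representations associated with $(G_1\ti G_2,N_1\ti N_2,\theta_1\ti\theta_2)$ and $(H_1\ti H_2,M_1\ti M_2,\vhi_1\ti\vhi_2)$ respectively, with factor sets $\al((g_1,g_2),(g_1',g_2'))=\al_1(g_1,g_1')\al_2(g_2,g_2')$ and similarly for $\al'$. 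Since $(H_1\ti H_2)\cap(N_1\ti N_2)=M_1\ti M_2$ and $\al\rceil_{(H_1\ti H_2)^2}=\al'$ by multiplicativity, condition~(1)--(2) of Definition~\ref{def-1-char-triple} hold, so $(\cP,\cP')$ gives $(G_1\ti G_2,N_1\ti N_2,\theta_1\ti\theta_2)\geqslant_{(g)}(H_1\ti H_2,M_1\ti M_2,\vhi_1\ti\vhi_2)$.

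For the case $*=c$, I would use that $\C_{G_1\ti G_2}(N_1\ti N_2)=\C_{G_1}(N_1)\ti\C_{G_2}(N_2)\le H_1\ti H_2$, and that for $c=(c_1,c_2)$ the scalar of $\cP(c)=\cP_1(c_1)\otimes\cP_2(c_2)$ is the product of the scalars of $\cP_1(c_1)$ and $\cP_2(c_2)$, which matches the corresponding product for $\cP'(c)$ by hypothesis on each factor. For $*=b$, I would invoke the criterion of Theorem~\ref{thm:criterion-geqslant_{(g),b}}: first note that a defect group of $\bl(\vhi_1\ti\vhi_2)=\bl(\vhi_1)\otimes\bl(\vhi_2)$ is $D_1\ti D_2$ where $D_i$ is a defect group of $\bl(\vhi_i)$, so $\C_{G_1\ti G_2}(D_1\ti D_2)=\C_{G_1}(D_1)\ti\C_{G_2}(D_2)\le H_1\ti H_2$. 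Then, using the compatibility of Dade's ramification group with direct products (so that $(G_1\ti G_2)[\bl(\theta_1\ti\theta_2)]=G_1[\bl(\theta_1)]\ti G_2[\bl(\theta_2)]$, which follows from the definition via central characters since the central character of a product factors), and the fact that for $x=(x_1,x_2)$ one has $\Cl_{\langle N_1\ti N_2,x\rangle}(x)=\Cl_{\langle N_1,x_1\rangle}(x_1)\ti\Cl_{\langle N_2,x_2\rangle}(x_2)$ and hence $\cP(\Cl(x)^+)=\cP_1(\Cl(x_1)^+)\otimes\cP_2(\Cl(x_2)^+)$, I would reduce the scalar-matching condition (a) of Theorem~\ref{thm:criterion-geqslant_{(g),b}} for the product to the conjunction of the two conditions for the factors, which hold by hypothesis (applying the same theorem to each factor).

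For part~(b), given a linear character $\iota\in\Lin((H_{1,0}\ti H_{2,0})/(M_1\ti M_2)\C_{H_{1,0}\ti H_{2,0}}(N_1\ti N_2))$, I would use that this quotient group decomposes as a direct product, so $\iota=\iota_1\ti\iota_2$ with $\iota_i\in\Lin(H_{i,0}/M_i\C_{H_{i,0}}(N_i))$; then by normality of each factor there are projective representations $\cP_i''$ associated with $((H_i)_{\iota_i},M_i,\vhi_i)$ satisfying (1) and (2) of Definition~\ref{defn-normal-iso}, and I would take $\cP'':=\cP_1''\otimes\cP_2''$, checking that $(H_1\ti H_2)_\iota=(H_1)_{\iota_1}\ti(H_2)_{\iota_2}$ and that the two required conditions follow from the componentwise statements together with part~(a) applied to the stabiliser subgroups. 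The main obstacle I anticipate is the bookkeeping for the block case: one must verify carefully that Dade's ramification group, defect groups, and centralisers all behave multiplicatively under direct products, and — since the theorem is only proved for $*\in\{c,b\}$ rather than also the empty decoration — confirm that the criterion of Theorem~\ref{thm:criterion-geqslant_{(g),b}} applies, i.e. that $\geqslant_{(g),c}$ has already been established for the product before one upgrades to $\geqslant_{(g),b}$. None of these steps is deep, but the notation is heavy and the order matters: $\geqslant_{(g)}$ first, then $\geqslant_{(g),c}$, then the ramification-group and defect-group computations, and only then the block criterion.
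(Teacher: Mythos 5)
Your construction of the tensor-product projective representations and your treatment of part~(b) coincide with the paper's; the difference lies in how you verify the block condition in part~(a). The paper does not re-verify $\geqslant_{c}$ or $\geqslant_{b}$ for the product at all: after checking the group-theoretic conditions, it invokes Remark~\ref{rmk:gen-isomorphisms} and Corollary~\ref{rmk:gen-isomorphisms-b} to reduce to the classical case $G=NH$, i.e.\ to $(N_1H_1\ti N_2H_2,\,N_1\ti N_2,\,\theta_1\ti\theta_2)\geqslant_{*}(H_1\ti H_2,\,M_1\ti M_2,\,\vhi_1\ti\vhi_2)$, which is exactly the direct-product theorem already proved by Sp\"ath (\cite[Prop.~2.18]{Sp18}, \cite[Thm.~5.1]{Sp17}). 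You instead unfold this and verify the criterion of Theorem~\ref{thm:criterion-geqslant_{(g),b}} directly, using $\Cl_{\langle N_1\ti N_2,x\rangle}(x)=\Cl_{\langle N_1,x_1\rangle}(x_1)\ti\Cl_{\langle N_2,x_2\rangle}(x_2)$ (correct, since $\langle N_1\ti N_2,x\rangle=(N_1\ti N_2)\langle x\rangle$ and the class is the $(N_1\ti N_2)$-orbit) and the multiplicativity of the resulting scalars. This is a legitimate alternative and arguably more self-contained, but one step is asserted too quickly: the identity $(G_1\ti G_2)[\bl(\theta_1\ti\theta_2)]=G_1[\bl(\theta_1)]\ti G_2[\bl(\theta_2)]$ is not immediate from Murai's definition, because $\langle N_1\ti N_2,(g_1,g_2)\rangle$ may be a proper subgroup of $\langle N_1,g_1\rangle\ti\langle N_2,g_2\rangle$, so its blocks covering $\bl(\theta_1)\otimes\bl(\theta_2)$ do not obviously factorize. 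You can sidestep this entirely: for $x=(x_1,x_2)$ with both $x_i\in G_i[\bl(\theta_i)]$ the scalars match by the criterion applied to each factor, while if some $x_i\notin G_i[\bl(\theta_i)]$ then $\cP_i(\Cl_{\langle N_i,x_i\rangle}(x_i)^+)^*$ and $\cP_i'((\Cl_{\langle N_i,x_i\rangle}(x_i)\cap H_i)^+)^*$ both vanish (Lemma~\ref{lem:block-induced}(b),(c), after lifting to the linear representations on the central extension as in the proof of Theorem~\ref{thm:criterion-geqslant_{(g),b}}), so both tensor-product scalars are $0$ and the condition holds for \emph{every} $x\in G_1\ti G_2$, hence in particular on the ramification group of the product, whatever it is. With that adjustment your argument is complete and buys a proof that does not rely on the external direct-product results of Sp\"ath, at the cost of redoing the class-sum bookkeeping that those results already contain.
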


\begin{proof}
(a) 
First, the group-theoretical conditions (Definitions~\ref{def-1-char-triple}~(a), \ref{def-central-isomrophism}~(a) and~\ref{def-block-isomrophism}~(a)) hold by direct calculation.
Suppose that $(\cP_i,\cP_i')$ gives $(G_i,N_i,\theta_i)\geqslant_{(g),b}(H_i,M_i,\vhi_i)$, where $\cP_i$ and $\cP_i'$ are projective $\CC$-representations associated with $(G_i,N_i,\theta_i)$ and $(H_i,M_i,\vhi_i)$ respectively for $i=1,2$.
Let $\widetilde\cP$ and $\widetilde\cP'$ be defined by \[\widetilde\cP(g_1,g_2)=\cP_1(g_1)\otimes\cP_2(g_2)\quad \textrm{for}\quad g_i\in G_i,\] and \[\widetilde\cP'(g_1,g_2)=\cP'_1(h_1)\otimes\cP'_2(h_2)\quad \textrm{for}\quad h_i\in H_i.\]
Then $\widetilde\cP$ and $\widetilde\cP'$ are projective $\CC$-representation associated with $(G_1\ti G_2,N_1\ti N_2,\theta_1\ti\theta_2)$ and $(H_1\ti H_2,M_1\ti M_2,\vhi_1\ti\vhi_2)$ respectively.
By Remark~\ref{rmk:gen-isomorphisms} and Corollary~\ref{rmk:gen-isomorphisms-b}, it suffices to check that $(\widetilde\cP|_{H_1N_1\ti H_2N_2},\widetilde\cP')$ gives \[(N_1H_1\ti N_2H_2,N_1\ti N_2,\theta_1\ti\theta_2)\geqslant_{*}(H_1\ti H_2,M_1\ti M_2,\vhi_1\ti\vhi_2)\]
which was shown in the proofs of \cite[Prop.~2.18]{Sp18} and \cite[Thm.~5.1]{Sp17}.

(b) Let $\iota\in\Lin(H_0/(M_1\ti M_2)\C_{H_0}(N_1\ti N_2))$ where $H_0=H_{1,0}\ti H_{2,0}$ and
 $\iota=\iota_1\ti\iota_2$ with $\iota_i\in\Lin(H_{i,0}/M_i\C_{H_{i,0}}(N_i))$ for $i=1,2$.
In (a) we assume further that $(G_i,N_i,\theta_i)\geqslant_{(g),*}(H_i,M_i,\vhi_i)$ is given by $(\cP_i,\cP'_i)$ normally with respect to $H_{i,0}$.
Then for $i=1,2$, by the assumption, there exists a projective $\CC$-representation $\cP_{i}''$ associated with the character triple $((H_i)_{\iota_i},M_i,\vhi_i)$ such that $(\cP_i,\cP''_{i})$ gives \[(G_i,N_i,\theta_i)\geqslant_{(g),*}((H_i)_{\iota_i},M_i,\vhi_i),\] and $\cP''_{i}(h)=\iota_i(h)\cP_i'(h)$ for any $h\in H_{i,0}$.
By the proof of (a), 
$(\cP_1\otimes\cP_2,\cP''_{1}\otimes\cP''_{2})$ gives \[(G_1\ti G_2,N_1\ti N_2,\theta_1\ti \theta_2)\geqslant_{(g),*}((H_1\ti H_2)_{\iota},M_1\ti M_2,\vhi_1\ti\vhi_2),\] and \[\cP''_{1}(h_1)\otimes\cP''_{2}(h_2)=\iota_1(h_1)\iota_2(h_2)\cP_1'(h_1)\otimes\cP_2'(h_2)\] for any $h_i\in H_{i,0}$.
Thus we complete the proof.
\end{proof}

Next, we consider character triples using wreath products.

\begin{thm}\label{thm:wreath-product}
\begin{enumerate}[\rm(a)]
\item Let $(G,N,\theta)\geqslant_{(g),*}(H,M,\vhi)$ with non-trivial $N$ and $*\in\{c,b\}$, and let $n$ be a positive integer.
Denote $\tG=G^n$, $\tN=N^n$, $\tH=H^n$, $\tM=M^n$ and $\ttheta=\theta\ti\cdots\ti\theta\in\Irr(\tN)$, $\tvhi=\vhi\ti\cdots\ti\vhi\in\Irr(\tM)$.	Then \[(G\wr \fS_n,\tN,\ttheta)\geqslant_{(g),*}(H\wr\fS_n,\tM,\tvhi).\]
\item In (a), if we assume further that $(G,N,\theta)\geqslant_{(g),*}(H,M,\vhi)$ is normal with respect to $H_0$, then \[(G\wr \fS_n,\tN,\ttheta)\geqslant_{(g),*}(H\wr\fS_n,\tM,\tvhi).\] is normal with respect to $\tH_0:=H_0^n$.
\end{enumerate}
\end{thm}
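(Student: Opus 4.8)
The plan is to realise $G\wr\fS_n$ and $H\wr\fS_n$ through permutation-twisted tensor powers of the projective representations witnessing $(G,N,\theta)\geqslant_{(g),*}(H,M,\vhi)$, and then to deduce everything from the direct-product statement Theorem~\ref{thm:direct-prod} together with the criterion of Theorem~\ref{thm:criterion-geqslant_{(g),b}}; this parallels, at the level of $\geqslant_{(g),*}$, the treatment of classical isomorphisms in \cite[\S5]{Sp17}. First I would fix $(\cP,\cP')$ giving $(G,N,\theta)\geqslant_{(g),*}(H,M,\vhi)$, with underlying spaces $V,V'$ and factor sets $\al,\al'$, and recall $\al\rceil_{H\ti H}=\al'$. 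For $\tau\in\fS_n$ let $P_\tau\in\GL(V^{\otimes n})$ and $P'_\tau\in\GL(V'^{\otimes n})$ be the operators permuting the tensor factors according to $\tau$, chosen multiplicatively ($P_{\sigma\tau}=P_\sigma P_\tau$, likewise $P'$), and set
\[\widetilde\cP\big((g_1,\dots,g_n)\tau\big)=\big(\cP(g_1)\otimes\cdots\otimes\cP(g_n)\big)P_\tau\]
for $g_i\in G$, $\tau\in\fS_n$, with $\widetilde\cP'$ on $H\wr\fS_n$ defined by the same formula using $\cP',P'$. A routine computation (using that $\cP\rceil_N$ is linear and intertwines $N$-multiplication) shows that $\widetilde\cP$ is a projective $\CC$-representation associated with $(G\wr\fS_n,\tN,\ttheta)$ with factor set $\widetilde\al\big((\vec g)\sigma,(\vec h)\tau\big)=\prod_i\al(g_i,h_{\sigma^{-1}(i)})$, and similarly for $\widetilde\cP'$; hence $\widetilde\al\rceil_{(H\wr\fS_n)\ti(H\wr\fS_n)}=\widetilde\al'$. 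Since $\tN\cap(H\wr\fS_n)=N^n\cap H^n=M^n=\tM$, Definition~\ref{def-1-char-triple} holds and $(\widetilde\cP,\widetilde\cP')$ gives $(G\wr\fS_n,\tN,\ttheta)\geqslant_{(g)}(H\wr\fS_n,\tM,\tvhi)$; on the base groups $\widetilde\cP\rceil_{G^n}=\cP^{\otimes n}$ and $\widetilde\cP'\rceil_{H^n}=\cP'^{\otimes n}$, which by Theorem~\ref{thm:direct-prod} give $(G^n,N^n,\ttheta)\geqslant_{(g),*}(H^n,M^n,\tvhi)$.

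For $*=c$: since $N\ne1$, any element of $G\wr\fS_n$ centralising $\tN=N^n$ fixes each coordinate, so $\C_{G\wr\fS_n}(\tN)=\C_G(N)^n\le H^n\le H\wr\fS_n$ (the same equality holds with $\C_G(N)$ replaced by $\C_G(D)$ below), and for $\vec c=(c_1,\dots,c_n)\in\C_G(N)^n$ the scalar attached to $\widetilde\cP(\vec c)=\bigotimes_i\cP(c_i)$ is the product of those attached to the $\cP(c_i)$, which agree with those for the $\cP'(c_i)$ by Definition~\ref{def-central-isomrophism}(2); hence $(\widetilde\cP,\widetilde\cP')$ gives $\geqslant_{(g),c}$. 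For $*=b$: if a defect group $D$ of $\bl(\vhi)$ is trivial then $\C_G(D)=G\le H$, so $G=H$, $N=M$, $G\wr\fS_n=H\wr\fS_n$ and there is nothing to prove; assume $D\ne1$. Then $\bl(\tvhi)$ has defect group $\tD=D^n$ and $\C_{G\wr\fS_n}(\tD)=\C_G(D)^n\le H\wr\fS_n$, so Definition~\ref{def-block-isomrophism}(1) holds; as $(\widetilde\cP,\widetilde\cP')$ gives $\geqslant_{(g),c}$, Theorem~\ref{thm:criterion-geqslant_{(g),b}} reduces us to comparing $\widetilde\cP(\Cl_{\langle\tN,x\rangle}(x)^+)^*$ and $\widetilde\cP'((\Cl_{\langle\tN,x\rangle}(x)\cap(H\wr\fS_n))^+)^*$ for $x\in(G\wr\fS_n)[\bl(\ttheta)]$. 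Here $\bl(\ttheta)=\bl(\tvhi)^{\tN}$ (from $\bl(\theta)=\bl(\vhi)^N$), so a defect group $\tQ$ of $\bl(\ttheta)$ contains $\tD$ and, by the inclusion $G[b]\le N\C_G(D)$ recalled in \S\ref{subsec:induced-blocks}, $(G\wr\fS_n)[\bl(\ttheta)]\le\tN\,\C_{G\wr\fS_n}(\tQ)\le(N\C_G(D))^n\le G^n$; thus such an $x$ lies in the base group, $\langle\tN,x\rangle\le G^n$, $\Cl_{\langle\tN,x\rangle}(x)\cap(H\wr\fS_n)=\Cl_{\langle\tN,x\rangle}(x)\cap H^n$, and $x\in(G^n)[\bl(\ttheta)]$. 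Since $\widetilde\cP\rceil_{G^n}=\cP^{\otimes n}$ and $\widetilde\cP'\rceil_{H^n}=\cP'^{\otimes n}$, the required equality is precisely Theorem~\ref{thm:criterion-geqslant_{(g),b}}(a) for $(\cP^{\otimes n},\cP'^{\otimes n})$, which holds because that pair gives $\geqslant_{(g),b}$ by Theorem~\ref{thm:direct-prod}. (Alternatively one checks directly $\Cl_{\langle\tN,x\rangle}(x)=\prod_i\Cl_{\langle N,x_i\rangle}(x_i)$ for $x=(x_1,\dots,x_n)$, so that both matrices factor as tensor products over the coordinates and the criterion for $(\cP,\cP')$ applies coordinatewise.) This proves part~(a).

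For part~(b), given $\iota\in\Lin(\tH_0/\tM\,\C_{\tH_0}(\tN))$ with $\tH_0=H_0^n$, the nontriviality of $N$ gives $\C_{\tH_0}(\tN)=\C_{H_0}(N)^n$, whence $\tH_0/\tM\,\C_{\tH_0}(\tN)\cong\big(H_0/M\,\C_{H_0}(N)\big)^n$ and $\iota=\iota_1\ti\cdots\ti\iota_n$ with $\iota_i\in\Lin(H_0/M\,\C_{H_0}(N))$. By normality of $(G,N,\theta)\geqslant_{(g),*}(H,M,\vhi)$ with respect to $H_0$ there are projective representations $\cP''_i$ associated with $(H_{\iota_i},M,\vhi)$ such that $(\cP,\cP''_i)$ gives $(G,N,\theta)\geqslant_{(g),*}(H_{\iota_i},M,\vhi)$ and $\cP''_i(h)=\iota_i(h)\cP'(h)$ on $H_0$, which I would choose so that $\cP''_i$ and $\cP''_j$ coincide (after suitable conjugation) whenever $\iota_i,\iota_j$ are $H$-conjugate. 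By Theorem~\ref{thm:direct-prod}, $(\cP^{\otimes n},\bigotimes_i\cP''_i)$ gives $(G^n,N^n,\ttheta)\geqslant_{(g),*}(H_{\iota_1}\ti\cdots\ti H_{\iota_n},\tM,\tvhi)$; setting $\widetilde\cP''=\bigotimes_i\cP''_i$ on $H_{\iota_1}\ti\cdots\ti H_{\iota_n}=(H\wr\fS_n)_\iota\cap H^n$ and extending it across $(H\wr\fS_n)_\iota$ by the operators $P'_\tau$ (possible by the compatibility just arranged), the arguments of part~(a) carry over verbatim — neither $\C_{G\wr\fS_n}(\tN)=\C_G(N)^n$ nor $\C_{G\wr\fS_n}(\tD)=\C_G(D)^n$ nor $(G\wr\fS_n)[\bl(\ttheta)]\subseteq(N\C_G(D))^n$ involves $\iota$ — to show $(\widetilde\cP,\widetilde\cP'')$ gives $(G\wr\fS_n,\tN,\ttheta)\geqslant_{(g),*}((H\wr\fS_n)_\iota,\tM,\tvhi)$; and $\widetilde\cP''(\vec h)=\prod_i\iota_i(h_i)\cdot\widetilde\cP'(\vec h)=\iota(\vec h)\,\widetilde\cP'(\vec h)$ for $\vec h=(h_1,\dots,h_n)\in\tH_0$. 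Hence the relation is normal with respect to $\tH_0=H_0^n$.

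I expect the main obstacle to be the block-theoretic bookkeeping: proving cleanly that Dade's ramification group $(G\wr\fS_n)[\bl(\ttheta)]$ sits inside the base group $G^n$ — so the permutation twists $P_\tau$ never enter the central-character computation — and, in part~(b), tracking how the factors $\iota_i$ are permuted by the surviving part of $\fS_n$ and arranging the $\cP''_i$ compatibly along $H$-conjugate coordinates. Both points are routine but delicate, and for the full verifications I would follow the templates of \cite[\S5]{Sp17} and \cite[\S2]{Sp18}.
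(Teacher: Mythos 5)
Your construction of $\widetilde\cP$ and $\widetilde\cP'$ (tensor powers twisted by the permutation operators $P_\tau$, $P'_\tau$) is exactly the one the paper uses, and your verification of $\geqslant_{(g)}$ and $\geqslant_{(g),c}$ matches the standard computations. Where you genuinely diverge is in the block condition. The paper does not verify Definition~\ref{def-block-isomrophism} for the wreath product directly: it restricts to $\tN(H\wr\fS_n)=(NH)\wr\fS_n$, invokes the classical results \cite[Thm.~2.21]{Sp18} and \cite[Thm.~5.2]{Sp17} to get $((NH)\wr\fS_n,\tN,\ttheta)\geqslant_{*}(H\wr\fS_n,\tM,\tvhi)$, and then lifts back to $G\wr\fS_n$ via Remark~\ref{rmk:gen-isomorphisms} and Corollary~\ref{rmk:gen-isomorphisms-b}. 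You instead apply the criterion of Theorem~\ref{thm:criterion-geqslant_{(g),b}} to the full wreath product and show that $(G\wr\fS_n)[\bl(\ttheta)]\le\tN\C_{G\wr\fS_n}(\tQ)\le G^n$, so that the scalar condition only ever has to be tested on base-group elements, where $\widetilde\cP$ and $\widetilde\cP'$ restrict to $\cP^{\otimes n}$ and $\cP'^{\otimes n}$ and Theorem~\ref{thm:direct-prod} finishes the job. This is a legitimate and arguably cleaner route: it avoids redoing the computation with class sums meeting nontrivial permutation cosets (the computation on \cite[p.~1088]{Sp17}), at the price of needing the ramification group to avoid the symmetric-group part. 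Your part~(b) likewise parallels the paper's, though the paper first conjugates $\iota$ into the block form $\prod_i\iota_i^{n_i}$ so that $(H\wr\fS_n)_\iota=\prod_i H_{\iota_i}\wr\fS_{n_i}$ is visibly a product of wreath products; your "compatibility after suitable conjugation" is doing the same thing and should be spelled out in that normal form.

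The one genuine gap is your dismissal of the degenerate case. The containment $\C_{G\wr\fS_n}(\tQ)\le\C_G(Q)^n$ needs $Q\ne1$, where $Q$ is a defect group of $\bl(\theta)$; you reduce this to $D\ne1$ and claim that $D=1$ leaves "nothing to prove" because then $G=H$, $N=M$. That conclusion is wrong: even with $G=H$ and $N=M$ (and indeed $\theta=\vhi$, since a defect-zero block has a unique irreducible character), the pair $(\cP,\cP')$ may consist of two \emph{different} projective representations associated with the same triple, the induced maps $\sigma_J$ are then not the identity, and condition~(2) of Definition~\ref{def-block-isomrophism} for $(\widetilde\cP,\widetilde\cP')$ is still a nontrivial assertion; moreover $(G\wr\fS_n)[\bl(\ttheta)]$ can now meet nontrivial permutation cosets, which is precisely the situation your main argument was designed to exclude. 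The correct fix is immediate but must be stated: when $D=1$ one has $NH=G$, so $\geqslant_{(g),b}$ coincides with the classical $\geqslant_b$ and \cite[Thm.~5.2]{Sp17} applies verbatim --- which is in effect the paper's argument in all cases. With that repair your proof is complete.
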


\begin{proof}	
(a)	 
Suppose that $(\cP,\cP')$ gives $(G,N,\theta)\geqslant_{(g),b}(H,M,\vhi)$, where $\cP$ and $\cP'$ are projective $\CC$-representations associated with $(G,N,\theta)$ and $(H,M,\vhi)$ respectively.
Let \[\mathcal R\colon \fS_n\to\GL_{n\theta(1)}(\CC)\quad \textrm{and} \quad \mathcal R'\colon \fS_n\to\GL_{n\vhi(1)}(\CC)\] be the linear $\CC$-representations of the symmetric group $\fS_n$ defined as in \cite[Definition~2.20]{Sp18}.
Let $\widetilde\cP\colon G\wr \fS_n\to\GL_{\theta(1)^n}(\CC)$ be defined by \[ \widetilde\cP((g_1,\ldots,g_n)\sigma)=(\cP(g_1)\otimes\cdots\otimes\cP(g_n))\mathcal R(\sigma)\]
for $g_1,\ldots,g_n\in G$ and $\sigma\in\fS_n$ and $\widetilde\cP'\colon H\wr \fS_n\to\GL_{\theta(1)^n}(\CC)$ analogously.
Using the proofs of \cite[Thm.~2.21]{Sp18} and \cite[Thm.~5.2]{Sp17}, it can be checked that
\[(HN\wr \fS_n,\tN,\ttheta)\geqslant_{*}(H\wr\fS_n,\tM,\tvhi)\] is given by
 $(\widetilde\cP\rceil_{HN\wr \fS_n},\widetilde\cP')$.
Then the assertion holds by Remark~\ref{rmk:gen-isomorphisms} and Corollary~\ref{rmk:gen-isomorphisms-b}.

(b) Let $\iota\in\Lin(\tH_0/\tM\C_{\tH_0}(\tN))$.
Then up to $\tH$-conjugacy, we may assume that $\iota=\prod_{i=1}^{u}\iota_i^{n_i}$ where $\iota_i\in\Lin(H_0/M\C_{H_0}(N))$ and $n=\sum_{i=1}^un_i$ satisfy that $\iota_i$ and $\iota_j$ are not $H$-conjugate if $i\ne j$.
From this, $\tH_\iota=\prod_{i=1}^u H_{\iota_i}\wr\fS_{n_i}$.

In (a) we may assume further that $(G,N,\theta)\geqslant_{(g),*}(H,M,\vhi)$ is given by $(\cP,\cP')$ normally with respect to $H_0$.
Let $H_{i}=H_{\iota_i}$.
By the assumption, there exists a projective $\CC$-representation $\cP_{i}''$ associated with $(H_{i},M,\vhi)$ such that $(\cP,\cP''_{i})$ gives \[(G,N,\theta)\geqslant_{(g),*}(H_{i},M,\vhi),\] and $\cP''_{i}(h)=\iota_i(h)\cP'(h)$ for any $h\in H_0$.
Thus by the proofs of (a) and Theorem~\ref{thm:direct-prod}, we can construct a projective $\CC$-representation $\widetilde\cP''$ associated with $(\tH_\iota,\tM,\tvhi)$ such that $(\widetilde\cP,\widetilde\cP'')$ gives
\[(\tG\wr\fS_{n},\tN,\ttheta)\geqslant_{(g),*}(\tH_\iota,\tM,\tvhi).\]
In fact, if we let $\mathcal R'_i\colon\fS_{n_i}\to\GL_{n_i\vhi(1)}(\CC)$ be the linear $\CC$-representations of the symmetric group $\fS_{n_i}$ defined as in \cite[Definition~2.20]{Sp18} and let \[\widetilde\cP_{i}''((h_1,\ldots,h_{n_i})\sigma):=(\cP_{i}''(h_1)\otimes\cdots\otimes\cP_{i}''(h_{n_i}))\mathcal R'_i(\sigma)\]
for every $h_1,\ldots,h_{n_i}\in H_{\iota_i}$ and $\sigma\in\fS_{n_i}$, then \[\widetilde\cP''(h_1',\ldots,h_u')=\widetilde\cP_{1}''(h_1')\otimes\cdots\otimes\widetilde\cP_{u}''(h_u')\]
for $h_i'\in H_i\wr\fS_{n_i}$ with $i=1,\ldots,u$.
So $\cP''(h)=\iota(h)\widetilde\cP'(h)$ for any $h\in \tH_0$ and we complete the proof.
\end{proof}

In the following statement we consider character triples where the characters but not the groups coincide.

\begin{thm}\label{thm:butt-thm}
\begin{enumerate}[\rm(a)]
\item Suppose that $(G_1,N,\theta)\geqslant_{(g),*}(H_1,M,\vhi)$ with $*\in\{c,b\}$ and $N\unlhd G_2$. 
Assume that via the canonical morphism $\eps_i\colon G_i\to\Aut(N)$ (with kernel $\C_{G_i}(N)$) one has $\eps_1(G_1)=\eps_2(G_2)$.
Let $H_2=\eps_2^{-1}\eps_1(H_1)$.
	Then 
	\[(G_2,N,\theta)\geqslant_{(g),*}(H_2,M,\vhi).\]
\item In (a), if we assume further that $(G_1,N,\theta)\geqslant_{(g),*}(H_1,M,\vhi)$ is normal with respect to $H_{1,0}$, then $(G_2,N,\theta)\geqslant_{(g),*}(H_2,M,\vhi)$ is normal with respect to $H_{2,0}$, where $H_{2,0}$ is any subgroup of $H_2$ such that $M\le H_{2,0}\unlhd H_2$ and $\eps_1(H_{1,0})=\eps_2(H_{2,0})$.
\end{enumerate}
\end{thm}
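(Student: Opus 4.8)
The plan is to reduce the statement about the (possibly larger) groups $G_2,H_2$ to the given relation for $G_1,H_1$ by passing through a common ``fibre product'' group that sees both of them. Concretely, I would set $A=\eps_1(G_1)=\eps_2(G_2)\le\Aut(N)$ and $B=\eps_1(H_1)=\eps_2(H_2)\le A$, and form $G=G_1\times_A G_2=\{(x_1,x_2)\in G_1\times G_2\mid \eps_1(x_1)=\eps_2(x_2)\}$ together with $H=G_1\times_A G_2$ restricted to $H_1\times H_2$. Identify $N$ with its diagonal copy $\Delta N=\{(n,n)\mid n\in N\}$ inside $G$ (this makes sense because $\eps_1|_N=\eps_2|_N$ is the identity on $N$), so that $\Delta N\unlhd G$, $\Delta N\cap H=\Delta M$, and $\theta,\vhi$ are viewed as characters of $\Delta N,\Delta M$. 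The two projections $\pi_i\colon G\to G_i$ are surjective with kernels $1\times\C_{G_2}(N)$ and $\C_{G_1}(N)\times 1$ respectively, and each $\pi_i$ restricts to an isomorphism $\Delta N\to N$; moreover $\pi_i(H)=H_i$ and $\pi_i(\C_G(\Delta N))=\C_{G_i}(N)$.

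First I would use Theorem~\ref{thm:butt-thm}(a) in the ``$G_1$-to-$G$'' direction: since $\ker(\pi_1)=1\times\C_{G_2}(N)$ is central modulo the kernel of the action on $N$ — in fact $\eps$ composed with $\pi_1$ agrees with the action map of $G$ on $\Delta N$ — one has that $G$ and $G_1$ induce the same automorphism group on $N$, namely $A$, and $H$ maps onto $H_1$. So it suffices to know that $\geqslant_{(g),*}$ is inherited when one passes along a surjection whose kernel centralises the normal subgroup. This ``lifting along a central extension'' is exactly the content that should be proved here: given the projective representations $(\cP_1,\cP_1')$ witnessing $(G_1,N,\theta)\geqslant_{(g),*}(H_1,M,\vhi)$, inflate them via $\pi_1$ to projective representations $\cP=\cP_1\circ\pi_1$ and $\cP'=\cP_1'\circ\pi_1|_H$ of $G$ and $H$. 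These are associated with $(G,\Delta N,\theta)$ and $(H,\Delta M,\vhi)$, their factor sets still agree on $H\times H$, the scalars on $\C_G(\Delta N)=\pi_1^{-1}(\C_{G_1}(N))$ are inherited from those on $\C_{G_1}(N)$, and for $*=b$ one checks the defect-group condition $\C_G(D_0)\le H$ (where $D_0$ corresponds to a defect group $D$ of $\bl(\vhi)$ with $\C_{G_1}(D)\le H_1$, using that $\pi_1$ induces an isomorphism on $N$ so transports defect groups and their centralisers) and invokes Theorem~\ref{thm:criterion-geqslant_{(g),b}}: the matrices $\cP(\Cl_{\langle \Delta N,x\rangle}(x)^+)^*$ depend only on the image of $x$ in $G_1$ together with the class sum data, which is unchanged under inflation. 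Symmetrically, the same argument in the ``$G$-to-$G_2$'' direction is obstructed, because $\pi_2$ goes the wrong way; instead I would run Theorem~\ref{thm:butt-thm}(a) with the roles reversed, i.e.\ apply it to $(G,\Delta N,\theta)\geqslant_{(g),*}(H,\Delta M,\vhi)$ and the group $G_2$ with $N\unlhd G_2$, using that $\eps(G)=A=\eps_2(G_2)$ and that $G_2=\eps_2^{-1}\eps(H)$ maps to $H$. Hence the whole statement follows by applying part (a) twice — once to climb from $G_1$ to the fibre product, once to descend to $G_2$ — provided one is content to use the theorem as a black box in both directions, which is legitimate since the proof of (a) is self-contained at the level of the witnessing projective representations.

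The cleanest way to package this, avoiding the apparent circularity of ``applying (a) to prove (a)'', is in fact to prove a single lemma: if $\rho\colon \widehat G\to G$ is a surjective homomorphism with $N\unlhd \widehat G$ mapped isomorphically to $N\unlhd G$ and with $\ker(\rho)\le\C_{\widehat G}(N)$, then $(G,N,\theta)\geqslant_{(g),*}(H,M,\vhi)$ implies $(\widehat G,\widehat N,\theta)\geqslant_{(g),*}(\widehat H,\widehat M,\vhi)$ for $\widehat N=\rho^{-1}(N)$ restricted appropriately, $\widehat H=\rho^{-1}(H)$, and conversely if additionally $\ker(\rho)\cap N=1$ and every class sum condition descends. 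One direction of this lemma is the ``inflate the projective representations'' computation just sketched; the other is precisely Proposition~\ref{prop:going-to-central-quotient} (quotienting by a central subgroup meeting $N$ trivially). Then Theorem~\ref{thm:butt-thm}(a) is the assertion that these two directions, composed through the fibre product $\widehat G=G_1\times_A G_2$, connect $G_1$ and $G_2$. For the ``normality'' statement in part (b): one transports the witnessing family $\{\cP_1''\}$ normal with respect to $H_{1,0}$ along $\pi_1$ to a family normal with respect to $\pi_1^{-1}(H_{1,0})$ in $H$, pushes it down to $H_2$ via the fibre-product projection, and observes that $\Lin(H_{2,0}/M\C_{H_{2,0}}(N))$ is canonically identified with $\Lin(H_{1,0}/M\C_{H_{1,0}}(N))$ because $\eps_1(H_{1,0})=\eps_2(H_{2,0})=:B_0$, so both are $\Lin(B_0/\eps_i(M))$ — the hypothesis $\eps_1(H_{1,0})=\eps_2(H_{2,0})$ is exactly what makes the families match up; one then checks $\cP''(h)=\iota(h)\cP'(h)$ on $H_{2,0}$ by unwinding these identifications.

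The main obstacle I anticipate is the bookkeeping around the diagonal identification of $N$ inside the fibre product and the compatibility of defect groups under the two projections: one must be careful that $\bl(\vhi)$ (a block of $M\le H_1$, equivalently of $\Delta M\le H$, equivalently of $M\le H_2$) really has the ``same'' defect group along all three descriptions, so that the condition $\C_{G_i}(D)\le H_i$ transfers in both directions. Since $N$, hence $M$, is mapped isomorphically by each $\pi_i$ and the blocks of $M$, $N$ are intrinsic to those subgroups, this is true but needs the observation that $\C_{G}(\Delta D)=\pi_i^{-1}(\C_{G_i}(D))$ for a defect group $D\le M$, which follows from $\ker(\pi_i)\le\C_G(\Delta N)\le\C_G(\Delta D)$. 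Beyond that, the verification that all the class-sum scalars in Theorem~\ref{thm:criterion-geqslant_{(g),b}} survive inflation and the descent of Proposition~\ref{prop:going-to-central-quotient} is routine, so I would only sketch it.
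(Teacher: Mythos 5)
Your route is genuinely different from the paper's: the paper proves (a) by a direct ``butterfly'' construction, transporting the values of $\cP_1,\cP_1'$ along a transversal of the $N\C_{G_1}(N)$-cosets matched through $\eps_1=\eps_2$, extending the central character of $\Z(N)$ to a function $\widehat\mu$ on $\C_{G_2}(N)$, and then invoking Remark~\ref{rmk:gen-isomorphisms} and Corollary~\ref{rmk:gen-isomorphisms-b}; you instead interpolate through the fibre product $G=G_1\times_A G_2$ with $N$ embedded diagonally. Your ascent along $\pi_1$ (inflating $\cP_1,\cP_1'$) is plausible in outline. The genuine gap is in the descent. You claim that passing from $(G,\Delta N,\theta)\geqslant_{(g),*}(H,\Delta M,\vhi)$ to the quotient by $\ker\pi_2=\C_{G_1}(N)\times 1$ ``is precisely Proposition~\ref{prop:going-to-central-quotient}''. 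It is not: that proposition requires the normal subgroup $Z$ being factored out to be a $p'$-group or a central ($p$-)subgroup, whereas $\C_{G_1}(N)\times 1$ --- though normal in $G$, contained in $\C_G(\Delta N)$ and meeting $\Delta N$ trivially --- is in general neither (e.g.\ $G_1=N\times P$ with $P$ a nonabelian $p$-group). The hypothesis is not cosmetic: for $*=b$ the proof of Proposition~\ref{prop:going-to-central-quotient} rests on block domination and the transfer of defect groups to quotients (\cite[Prop.~2.4]{NS14} and Theorems~8.8, 8.10 of Chapter~5 of \cite{NT89}), which are available precisely for $p'$-kernels and central kernels. To make the descent work you would need a strengthened quotient lemma for arbitrary $Z\unlhd G$ with $Z\le\C_G(N)$ and $Z\cap N=1$, proved for instance by rerunning the class-sum criterion of Theorem~\ref{thm:criterion-geqslant_{(g),b}} in the quotient (using that $\Cl_{\langle\overline N,\overline x\rangle}(\overline x)$ is the bijective image of $\Cl_{\langle N,x\rangle}(x)$ because $Z$ centralises $N$ and $Z\cap N=1$, and that ramification groups and defect-group centralisers correspond). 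None of this is in your sketch, and your ``single lemma'' packaging does not remove the problem, since its converse direction is exactly this missing statement.

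A secondary weak point: even in the ascent, for $*=b$ you should verify condition (2) of Definition~\ref{def-block-isomrophism} for the inflated pair via Theorem~\ref{thm:criterion-geqslant_{(g),b}}, rather than asserting the class-sum data is ``unchanged under inflation'' --- the ramification group $G[\bl(\theta)]$ and the classes $\Cl_{\langle\Delta N,x\rangle}(x)$ are computed in $G$, not $G_1$, and one must argue (say, via the equality $\cP(\Cl_{\langle\Delta N,x\rangle}(x)^+)=\cP_1(\Cl_{\langle N,x_1\rangle}(x_1)^+)$ and the vanishing statements of Lemma~\ref{lem:block-induced}) that the required scalars agree for all $x\in G[\bl(\theta)]$. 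Your treatment of part (b), identifying $\Lin(H_{i,0}/M\C_{H_{i,0}}(N))$ through $\eps_1(H_{1,0})=\eps_2(H_{2,0})$, matches the paper's correspondence $\iota_1\leftrightarrow\iota_2$ and is fine once (a) is secured.
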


\begin{proof}
(a)	 Note that $\eps_1(NH_1)=\eps_2(NH_2)$.
Let $\cT_1\subseteq H_1$ be a complete set of representatives of the $M\C_{G_1}(N)$-cosets in $H_1$ with $1\in\cT_1$.
Then $\cT_1$ can be extended to a complete set $\cT_1'$ of representatives of the $N\C_{G_1}(N)$-cosets in $G_1$.
Now for every $t\in\cT_1'$ we choose $\hat t\in G_2$ with $\eps_1(t)=\eps_2(\hat t)$, with $\hat 1=1$.
Then $\cT_2'=\{\hat t\mid t\in\cT'_1\}$ represents all $N\C_{G_2}(N)$-cosets in $G_2$, and \[\cT_2=\{\hat t\mid t\in\cT_1\}=H_2\cap \cT_2'\] represents all $M\C_{G_2}(N)$-cosets in $H_2$.

Let $\Irr(\Z(N)\mid\theta)=\{\mu\}$ and let $\widehat\mu\colon \C_{G_2}(N)\to\CC^\ti$ be a map such that $\widehat\mu(cz)=\widehat\mu(c)\mu(z)$ for every $c\in\C_{G_2}(N)$ and $z\in\Z(N)$.
For example, if we write $\C_{G_2}(N)=\coprod_{i=1}^{t}c_i\Z(N)$ with $c_1=1$, then we can define $\widehat \mu(c_iz)=\mu(z)$ for $z\in\Z(N)$ (see e.g. \cite[p.~83]{Na18}).

Suppose that $(G_1,N,\theta)\geqslant_{(g),b}(H_1,M,\vhi)$ is given by $(\cP_1,\cP'_1)$.
Define maps \[\cP_2\colon G_2\to\GL_{\theta(1)}(\CC) \ \ \textrm{and}\ \ \cP_2'\colon H_2\to\GL_{\vhi(1)}(\CC) \]
by \[\cP_2(\hat t'nc)=\cP_1(\hat t')\cP_1(n)\widehat\mu(c)\ \ \textrm{and}\ \ \cP_2'(\hat tmc)=\cP_1'(\hat t)\cP_1'(m)\widehat\mu(c) \]
for every $\hat t'\in\cT_2'$, $n\in N$, $c\in\C_{G_2}(N)$,  $\hat t\in\cT_2$ and $m\in M$.

As in the proof of \cite[Thm.~2.16]{Sp18}, we can show that $\cP_2$ and $\cP_2'$ are well-defined projective representations associated with $(G_2,N,\theta)$ and $(H_2,M,\vhi)$ respectively.
Similar to the proofs of \cite[Thm.~2.16]{Sp18} and \cite[Thm.~5.3]{Sp17}, one has that $(\cP_2\rceil_{NH_2},\cP_2')$ gives \[(NH_2,N,\theta)\geqslant_{*}(H_2,M,\vhi).\]
By Remark~\ref{rmk:gen-isomorphisms} and Corollary~\ref{rmk:gen-isomorphisms-b} this completes the proof.

(b) We assume further that \[\{\,t\in\cT_1\mid \eps_1(t)\in \eps_1(H_{1,0})\,\}\subseteq H_{1,0}\] and 
\[\{\,\hat t\in\cT_2\mid \eps_2(\hat t)\in \eps_2(H_{2,0})\,\}\subseteq H_{2,0}.\]
Let $\iota_2\in\Lin(H_{2,0}/M\C_{H_{2,0}}(N))$.
Since \[H_{2,0}/M\C_{H_{2,0}}(N)\cong H_{2,0}\C_{G_2}(N)/M\C_{G_2}(N)\cong \eps_2(H_{2,0})/\eps_2(M),\] $\iota$ can be regarded as an irreducible character of $\eps_2(H_{2,0})/\eps_2(M)$.
So $\iota_2$ corresponds naturally to a character $\iota_1\in\Lin(H_{1,0}/M\C_{H_{1,0}}(M))$ so that $\eps_1((H_1)_{\iota_1})=\eps_2((H_2)_{\iota_2})$ and $\C_{G_i}(N)\le (H_i)_{\iota_i}$.
In particular, $\iota_1(t)=\iota_2(\hat t)$ for $t\in\cT_1\cap H_{1,0}$.

In (a) we may assume further that $(G_1,N,\theta)\geqslant_{(g),*}(H_1,M,\vhi)$ is given by $(\cP_1,\cP_1')$ normally with respect to $H_{1,0}$. 
Then there exists a projective $\CC$-representation $\cP_1''$ associated with $((H_1)_{\iota_1},M,\vhi)$ such that $(\cP_1,\cP_1'')$ gives \[(G_1,N,\theta)\geqslant_{(g),*}((H_1)_{\iota_1},M,\vhi),\] and $\cP''_1(h)=\iota_1(h)\cP'_1(h)$ for any $h\in H_{1,0}$.
Define map \[\cP_2''\colon (H_2)_{\iota_2}\to\GL_{\vhi(1)}(\CC) \]
by \[\cP_2''(\hat tmc)=\cP_1''(\hat t)\cP_1''(m)\widehat\mu(c) \]
for every $\hat t\in\cT_2$ , $c\in\C_{G_2}(N)$ and $m\in M$.
Then $\cP''_2(h)=\iota_2(h)\cP'_2(h)$ for any $h\in H_{2,0}$.
Now by the proof of (a), $(\cP_2,\cP_2'')$ gives \[(G_2,N,\theta)\geqslant_{(g),*}((H_2)_{\iota_2},M,\vhi),\] and thus we complete the proof.
\end{proof}

%%%%%%%%%%%%%%%%%%%%%%%%%%%%%%%%%%%%%%%%%%%%%%%%%%%%%%%%%%%%%%%%%%%%%%%%%%%%%%%%%%%%%%%%%%%%%%%%%%%%%%%%%%%%%%%%%%%%%%%%%%%%%%%%%%%%%%%%%%

\section{Weights and Clifford theory}\label{sec:conclusion}

A fundamental approach on how to get an equivariant bijection between the irreducible Brauer characters and weights for a finite group from such equivariant bijection of certain normal subgroups in terms of modular character triples was established in \cite{FLZ22,FLZ23,MRR23}.
In this section, we establish some statements for equivariant bijections between the irreducible characters and weights of a finite group and its normal subgroups in terms of character triples.

\subsection{Covering of weights}
Let $G$ be a finite group. 
A \emph{weight} of $G$ is a pair $(R,\vhi)$, where $R$ is a (possibly trivial)
$p$-subgroup of $G$ and $\vhi\in\dz(\N_G(R)/R)$.
Let $\Alp^0(G)$ denote the set of weights of $G$. 
The $G$-orbit of $(R,\vhi)$ is denoted by $\overline{(R,\vhi)}$ and we define
$\Alp(G)=\Alp^0(G)/\!\sim_G$. Sometimes we also write $\overline{(R,\vhi)}$
simply as $(R, \vhi)$ when no confusion can arise.
For $\nu\in\Lin_{p'}(\Z(G))$, we denote by $\Alp^0(G\mid \nu)$ the set of
weights $(R,\vhi)$ of $G$ with $\vhi\in\Irr(\N_G(R)\mid\nu)$ and write
\[\Alp(G\mid\nu)=\Alp^0(G\mid \nu)/\!\sim_G.\]
The group $\Lin_{p'}(G)$ acts on $\Alp^0(G)$ by $\mu.(R,\vhi)=(R,\mu'\vhi)$
where $\mu'$ is the restriction of $\mu\in\Lin_{p'}(G)$ to $\N_G(R)$
(sometimes we also write $\mu$ for $\mu'$ when no confusion can arise); see
\cite[Lemma~2.4]{BS22}.
This induces an action of $\Lin_{p'}(G)$ on $\Alp(G)$.

Each weight may be assigned to a unique block. Let $B$ be a block of $G$. 
A weight $(R,\vhi)$ of $G$ is said to be a $B$-weight if
$\bl_{\N_G(R)}(\vhi)^G=B$. We denote the set of $B$-weights by $\Alp^0(B)$.
Let $\Alp(B)=\Alp^0(B)/\!\sim_G$. If $b$ is a union of blocks of $G$, then we
define $\Alp(b)=\bigcup_{B\in b}\Alp(B)$.

In \cite{BS22}, Brough and Sp\"ath defined a relationship ``covering" for
weights between a finite group and its normal subgroups. To state this, we
first recall the Dade--Glauberman--Nagao correspondence from \cite{NS14}.
Let $N\unlhd M$ be finite groups such that $M/N$ is a $p$-group.
Suppose that $b$ is an $M$-invariant block of $N$ with defect group
$D_0\le \Z(M)_p\cap N$, $B$ is the unique block of $M$ covering $b$ and $D$
is a defect group of $B$. Let $B'$ be the Brauer correspondent of $B$.
Denote $L=\N_N(D)$ and let $b'$ be the unique block of $L$ covered by $B'$.
Then by \cite[Thm.~5.2]{NS14}, there is a natural bijection
$\pi_D\colon\Irr_D(b)\to \Irr_D(b')$, called the \emph{Dade--Glauberman--Nagao
	(DGN) correspondence}. Here, $\Irr_D(b)$ denotes the set of $D$-invariant
irreducible characters in $b$.

Suppose that $G\unlhd \tG$ are finite groups.
Let $(R,\varphi)$ be a weight of $G$ and set $N=\N_G(R)$. Let $M$ be a
subgroup of $\N_{\tG}(R)_\vhi$ such that $N\le M$ and $M/N$ is a $p$-group.
We fix a defect group $\tR/R$ of the unique block of $M/R$ which covers
$\bl_{N/R}(\vhi)$.
Then $R=\tR\cap N$, $M=N\tR$ and \[\N_{M/R}(\tR/R)=(\tR/R)\ti \C_{N/R}(\tR/R).\]
Note that $\pi_{\tR/R}(\varphi)\in\dz(\C_{N/R}(\tR/R))$. Denote by
$\overline\pi_{\tR/R}(\vhi)$ the associated character of
$\N_{M/R}(\tR/R)/(\tR/R)$ which lifts to
$\pi_{\tR/R}(\vhi)\times 1_{\tR/R}\in\Irr(\N_{M/R}(\tR/R))$.
Also note that \[\N_{M/R}(\tR/R)/(\tR/R)\cong \N_G(\tR)\tR/\tR\] is normal in
$\N_{\tG}(\tR)/\tR$.
Following Brough--Sp\"ath \cite{BS22}, a weight $(\tR,\tvhi)$ of $\tG$ with
\[\tvhi\in\dz(\N_{\tG}(\tR)/\tR\mid \overline\pi_{\tR/R}(\vhi))\] is said to
\emph{cover} $(R,\varphi)$.

If $(R,\vhi)$ is a weight of $G$ then we write $\Alp^0(\tG\mid(R,\vhi))$ for
the set of those $(\tR,\tvhi)\in\Alp^0(\tG)$ covering $(R,\vhi)$.
If $(\tR,\tvhi)$ is a weight of $\tG$, then we write $\Alp^0(G\mid(\tR,\tvhi))$
for the set of those $(R,\vhi)\in\Alp^0(G)$ covered by $(\tR,\tvhi)$.

For $(\tR,\tvhi)\in\Alp^0(\tG)$ and $(R,\vhi)\in\Alp^0(G)$, we say that
$\overline{(\tR,\tvhi)}$ \emph{covers} $\overline{(R,\vhi)}$ if $(\tR,\tvhi)$
covers $(R^g,\vhi^g)$ for some $g\in\tG$. If $(R,\vhi)$ is a weight of $G$ we
write $\Alp(\tG\mid\overline{(R,\vhi)})$ for the set of those
$\overline{(\tR,\tvhi)}\in\Alp(\tG)$ covering $\overline{(R,\vhi)}$.
If $(\tR,\tvhi)$ is a weight of $\tG$, then we write
$\Alp(G\mid\overline{(\tR,\tvhi)})$ for the set of those
$\overline{(R,\vhi)}\in\Alp(G)$ covered by $\overline{(\tR,\tvhi)}$.
For a subset $\cA\subseteq\Alp(G)$ we define
\[\Alp(\tG\mid\cA)
=\bigcup_{\overline{(R,\vhi)}\in\cA} \Alp(\tG\mid\overline{(R,\vhi)})\]
and for a subset $\widetilde\cA\subseteq\Alp(\tG)$, we define
\[\Alp(G\mid\widetilde\cA)=\bigcup_{\overline{(\tR,\tvhi)}\in\widetilde\cA}
\Alp(G\mid\overline{(\tR,\tvhi)}). \]

Let $G$ and $\tG$ be finite groups with $G\unlhd \tG$, and let $(R,\vhi)$ be a weight of $G$.
We assume further that $\tG/G$ is abelian and $\vhi$ extends to $\N_{\tG}(R)_\vhi$. 
By \cite[\S2.C]{BS22}, $\tR/R\cong\tR\N_G(R)/\N_G(R)$ is a Sylow $p$-subgroup of $\N_{\tG}(R)_\vhi/\N_G(R)$.
By \cite[Thm.~3.8]{Sp13} or \cite[Thm.~5.13]{NS14}, there exists an $\N_{\tG}(R)_\vhi$-invariant extension $\hvhi$ of $\vhi$ to $\tR\N_G(R)$ and a bijection
\[\Delta_\vhi\colon\rdz(\N_{\tG}(R)_\vhi\mid\hvhi)\to\dz(\N_{\tG}(\tR)_\vhi/\tR\mid\overline\pi_{\tR/R}(\vhi))\]
with $\bl_{\N_{\tG}(R)_\vhi}(\theta)=\bl_{\N_{\tG}(\tR)_\vhi}(\theta')^{\N_{\tG}(R)_\vhi}$ for every $\theta\in\rdz(\N_{\tG}(R)_\vhi\mid\hvhi)$.

\subsection{A criterion to establish $\geqslant_{(g),b}$}	
In \cite{BS22}, Brough and Sp\"ath presented a criterion for the inductive conditions of Alperin weight conjecture, which is mainly devoted to establishing block isomorphisms between modular character triples.
Now we give a similar result to establish $\geqslant_{(g),b}$ for character triples, which is useful for groups of Lie type.

\begin{thm}\label{thm:criterion-block}
Let $G$, $\tG'$ and $\tG$ be normal subgroups of a finite group $A$ such that for some subgroup $E$ of $A$ we have $A=\tG\rtimes E$.
Let $\tB$ be a union of blocks of $\tG$ which is $\Lin_{p'}(\tG/G)$-stable. Suppose that $\widetilde\cI\subseteq\Irr(\tB)\cap\Irr(\tG\mid 1_{\Z(\tG)_p})$ and $\widetilde\cA\subseteq \Alp(\tB)$ are $\Lin_{p'}(\tG/G)\rtimes E_{\tB}$-stable.
Let $\cI=\Irr(G\mid\widetilde\cI)$ and $\cA=\Alp(G\mid\widetilde\cA)$, and let $B$ be the union of blocks of $G$ covered by a block in $\tB$.
Suppose that the following hold.

	\begin{enumerate}[\rm(i)]
		\item \begin{enumerate}[\rm(a)]
		\item Both $\tG/G$ and $E$ are abelian,
		\item $\tG'/G=(\tG/G)_p$ and $\C_{A}(G)=\Z(\tG)$,
			\item for any character $\chi\in\cI$, one has that $\tG'\subseteq \tG_\chi$ and $\chi$ extends to its stabilizer in $\tG$, and
			\item for any element $\overline{(R,\vhi)}\in\cA$, the weight character $\vhi$ extends to its stabilizer in $\N_{\tG}(R)$.
			\end{enumerate}
			\item For every $\tchi\in\widetilde\cI$, there exists some character $\chi_0\in\Irr(G\mid\tchi)$ such that $(\tG\rtimes E)_{\chi_0}=\tG_{\chi_0}\rtimes E_{\chi_0}$ and
			$\chi_0$ extends to $G\rtimes E_{\chi_0}$.
			\item For every element $\overline{(\tR,\tvhi)}\in\widetilde\cA$, there exists some  $\overline{(R_0,\vhi_0)}\in\Alp(G\mid \overline{(\tR,\tvhi)})$ such that
			$(\tG\rtimes E)_{R_0,\vhi_0} = \tG_{R_0,\vhi_0} (G\rtimes E)_{R_0,\vhi_0}$ and	$\vhi_0$ extends to $(G\rtimes E)_{R_0,\vhi_0}$.
			\item 
		There exists an $\Lin_{p'}(\tG/G) \rtimes E_{\tB}$-equivariant bijection $\widetilde \varOmega\colon \widetilde\cI \to \widetilde\cA$ such that
		\begin{enumerate}[\rm(a)]
				\item $\widetilde\varOmega$ preserves blocks, and
			\item if the character $\tchi$ in (ii) and the weight $(\tR,\tvhi)$ in (iii) satisfying \[\overline{(\widetilde R,\widetilde\vhi)}=\widetilde\varOmega(\tchi),\]  then the character $\chi_0$ in (ii) and the weight $(R_0,\vhi_0)$ in (iii) can be chosen to satisfy $\bl(\hchi)=\bl(\hvhi)^{\tG_{\chi_0}}$, where $\hchi\in\Irr(\tG_{\chi_0}\mid\chi_0)$ is the  Clifford correspondent of $\tchi$ and $\hvhi$ is the extension of $\vhi_0$ to $\N_{\tG}(R_0)_{\vhi_0}/R_0$ such that via $\Delta_{\vhi_0}$ and the induction it corresponds to $(\tR,\tvhi)$.
		\end{enumerate}		
	\end{enumerate}
	Then there exists a blockwise $A_B$-equivariant bijection $\varOmega\colon\cI\to\cA/\!\sim_{\tG'}$ such that for every $\chi\in\cI$, there is a weight $(R,\vhi)$ of $G$ whose $\tG'$-orbit corresponds to of $\chi$ via $\varOmega$ satisfying that $\N_A(R)_\vhi\subseteq A_\chi$ and 
	\[ (A_\chi,G,\chi) \geqslant_{(g),b} (\N_A(R)_\vhi,\N_G(R),\vhi)\]
is normal with respect to $\N_{\tG'}(R)_\vhi$.
\end{thm}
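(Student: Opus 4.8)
The plan is to combine the Clifford-theoretic bookkeeping of weights (as in \cite{BS22}) with the criterion for $\geqslant_{(g),b}$ established in Theorem~\ref{thm:criterion-geqslant_{(g),b}} and the constructions in Section~\ref{sec:construction}. First I would reduce to the task of, for a fixed $\tchi\in\widetilde\cI$, producing a single character $\chi_0\in\Irr(G\mid\tchi)$ and a single weight $(R_0,\vhi_0)$ with the desired properties; the $A_B$-equivariant bijection $\varOmega$ is then assembled orbit by orbit, using the $\Lin_{p'}(\tG/G)\rtimes E_{\tB}$-equivariance of $\widetilde\varOmega$ together with the hypotheses (ii) and (iii) that give stabiliser factorisations $(\tG\rtimes E)_{\chi_0}=\tG_{\chi_0}\rtimes E_{\chi_0}$ and $(\tG\rtimes E)_{R_0,\vhi_0}=\tG_{R_0,\vhi_0}(G\rtimes E)_{R_0,\vhi_0}$. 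Passing to $\tG'$-orbits on the weight side is forced because $\tG'/G$ is the $p$-part of $\tG/G$ and weights covering a fixed weight of $G$ are only controlled up to the action of this $p$-group via the DGN correspondence; this is exactly the role of $\Alp(G\mid\widetilde\cA)/\!\sim_{\tG'}$ in the statement.

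Next I would build the two projective representations required by Definition~\ref{def-block-isomrophism}. On the character side, hypothesis (ii) says $\chi_0$ extends to $G\rtimes E_{\chi_0}$ and (i)(c) says $\chi_0$ extends to $\tG_{\chi_0}$; together with $A_{\chi_0}=\tG_{\chi_0}\rtimes E_{\chi_0}$ (from (i)(a),(b) and (ii)) one gets, after a standard gluing argument (cf.\ the proof of \cite[Thm.~8.16]{Na98} as used in Lemma~\ref{lem:iso-char-triples1}), a projective representation $\cP$ of $A_\chi$ associated with $(A_\chi,G,\chi)$ whose restriction to $\tG_{\chi_0}$ affords an extension of $\chi_0$. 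On the weight side one does the analogous construction from (i)(d) and (iii) to obtain $\cP'$ associated with $(\N_A(R)_\vhi,\N_G(R),\vhi)$. The containment $\N_A(R)_\vhi\subseteq A_\chi$ should follow from the equivariance of $\widetilde\varOmega$ combined with the factorisations of stabilisers and the fact that $\C_A(G)=\Z(\tG)$ acts trivially on everything in sight. To verify $\geqslant_{(g),c}$ I would check that $\C_A(G)=\Z(\tG)\le\N_A(R)_\vhi$ and that the central scalars match, which reduces to $\widetilde\cI\subseteq\Irr(\tG\mid 1_{\Z(\tG)_p})$ and the compatibility of central characters under the DGN correspondence.

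The decisive step is condition (2) of Definition~\ref{def-block-isomrophism}, i.e.\ that $\bl(\psi)=\bl(\sigma_J(\psi))^J$ for all intermediate $J$. By Theorem~\ref{thm:criterion-geqslant_{(g),b}} it suffices to check the central-character identity in $\FF$ for the class sums $\Cl_{\langle G,x\rangle}(x)^+$ with $x\in G[\bl(\chi)]$, equivalently (by Lemma~\ref{lem:block-induced}) the single block-induction identity $\bl(\Res^{A_\chi}_J(\cdot))=\bl(\Res^{\N_A(R)_\vhi}_{J\cap\N_A(R)_\vhi}(\cdot))^J$ for $J/G$ cyclic; and since $A_\chi/G$ is abelian, Proposition~\ref{prop:induced-extensions} lets one bootstrap from a single well-chosen cyclic quotient. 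That anchoring identity is precisely what hypothesis (iv)(b) provides: $\bl(\hchi)=\bl(\hvhi)^{\tG_{\chi_0}}$, where $\hchi$ is the Clifford correspondent of $\tchi$ over $\chi_0$ and $\hvhi$ is the extension of $\vhi_0$ to $\N_{\tG}(R_0)_{\vhi_0}/R_0$ matching $(\tR,\tvhi)$ through $\Delta_{\vhi_0}$. I expect the main obstacle to be propagating this single block equality, valid a priori only on $\tG_{\chi_0}$ (a $p'$-over-$G$ extension), to all of $A_\chi$, whose quotient by $G$ contains $p$-parts coming from $\tG'/G$ and from $E$: here one uses Lemma~\ref{lem:block-induced}(d) to see the $p$-part of the relevant linear character is trivial, and the normality assertion "normal with respect to $\N_{\tG'}(R)_\vhi$" is handled by Definition~\ref{defn-normal-iso} together with Theorems~\ref{thm:direct-prod} and~\ref{thm:butt-thm}, tracking how $\iota\in\Lin(\N_{\tG'}(R)_\vhi/\N_G(R)\C(\cdot))$ twists $\cP'$ by a linear character while leaving the block-induction identities intact thanks to $\iota$ being a $p$-character on a $p$-group quotient. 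The subtlety that must be watched throughout is the simultaneous coherence of the choices of $\chi_0$ across $\tG'$-orbits of weights and of $(R_0,\vhi_0)$ across the fibres of $\widetilde\varOmega$, which is where the equivariance in (iv) and the Dade ramification group bound $G[\bl(\theta)]\le N\C_G(D)\le\N_A(R)_\vhi$ do the real work.
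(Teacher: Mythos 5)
Your overall architecture matches the paper's: choose a $\Lin_{p'}(\tG/G)\rtimes E_{\tB}$-transversal in $\widetilde\cI$, pick $\chi_0$ and $(R_0,\vhi_0)$ via (ii) and (iii), prove the stabiliser matching $(\tG\rtimes E)_{\chi_0}=(\tG\rtimes E)_{R_0,\vhi_0}\tG'$, assemble $\varOmega$ equivariantly, and build $\cP,\cP'$ by gluing the two linear representations coming from the extendibility hypotheses (the paper uses \cite[Lemma~2.11]{Sp12} for this). The normality with respect to $\N_{\tG'}(R)_\vhi$ is indeed handled by twisting $\cD_1'$ by (an extension of) $\iota$, as you suggest.

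However, there is a genuine gap at what you yourself call the decisive step. Hypothesis (iv.b) gives the block identity $\bl(\hchi)=\bl(\hvhi)^{\tG_{\chi_0}}$ only over $\tG_{\chi_0}$, and Lemma~\ref{lemma:blockinductions1} propagates it to all $G\le J\le\tG_{\chi_0}$; but for $J\not\subseteq\tG_{\chi_0}$ nothing forces the particular extension of $\chi_0$ to $G\rtimes E_{\chi_0}$ supplied by (ii) to induce the right blocks. Your proposed fix --- ``Lemma~\ref{lem:block-induced}(d) shows the $p$-part of the relevant linear character is trivial'' --- addresses a different issue (the $p$-part of a correcting character inside a cyclic-over-$N$ subgroup $J$); it does not produce the $p'$-part of the correction across $E_{\chi_0}$, and your appeal to ``$A_\chi/G$ abelian'' is unfounded since $E$ need not centralise $\tG/G$. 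The paper's mechanism is: pass to the central extension $\widehat A$ determined by the factor set of $\cP$ (the factor set is non-trivial, so one cannot argue with honest characters on $A_{\chi_0}$ directly), apply Lemma~\ref{lem:induced-blocks2} --- which rests on \cite[Thm.~C]{KS15} for Dade's ramification group, \cite[Lemma~3.2]{CS15} and \cite[Thm.~3.5]{Mu13} --- to obtain a linear character $\xi\in\Lin(\widehat A/\tG_2)$ correcting the extension, and then \emph{redefine} $\cD_2$ as $\xi\cD_2$ before concluding via Lemma~\ref{lem:ectend-block-iso}. Without this re-choice of the extension your construction of $\cP$ is fixed too early and condition (2) of Definition~\ref{def-block-isomrophism} may simply fail for $J$ involving $E_{\chi_0}$.
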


\begin{proof}
Let $\widetilde\cI_T$ be an $(\Lin_{p'}(\tG/G) \rtimes E_{\tB})$-transversal in $\widetilde\cI$.
For every $\tchi\in\widetilde\cI_T$, we fix a character $\chi_0\in\cI$ with the property (ii) and let $\cI_0\subseteq \cI$ be the subset formed by those $\chi_0$.
We claim that $\cI_0$ is a $A_B$-transversal in $\cI$.
In fact, for every character $\tau\in\cI$ we let $\widetilde\tau\in\Irr(\tG\mid\tau)$.
Then there exists a unique character $\tchi\in\widetilde\cI_T$ with $\widetilde\tau=\tchi^e\eta$ for $e\in E_B$ and $\eta\in\Lin_{p'}(\tG/G)$.
From this $\chi_0$ and $\tau^{e^{-1}}$ are contained in $\Irr(G\mid\tchi)$ which implies that they are $\tG$-conjugate.
So there exists a unique element in $\cI_0$ to which $\tau$ is $A_B$-conjugate and this proves the claim.

Let $\widetilde\cA_T=\widetilde\varOmega(\widetilde\cI_T)$. By the equivariant property of $\widetilde\varOmega$, we know that $\widetilde\cA_T$ is an $\Lin_{p'}(\tG/G) \rtimes E_{\tB}$-transversal in $\widetilde\cA$.
As above for every element $\overline{(\tR,\tvhi)}\in\widetilde\cA_T$ we associate a weight $(R_0,\vhi_0)$ of $G$ such that $\overline{(\tR,\tvhi)}$ covers $\overline{(R_0,\vhi_0)}$ and the property (iii) holds.
Let those $\overline{(R_0,\vhi_0)}$ form the set $\cA_0$, an $A_B$-transversal in $\cA$.

For $\chi_0\in\cI_0$, $\tchi\in\Irr(\tG\mid\chi_0)\cap\widetilde\cI_T$ and $\widetilde\varOmega(\tchi)=\overline{(\tR,\tvhi)}$, we define $\varOmega_0(\chi_0)=\overline{(R_0,\vhi_0)}$, where $\overline{(R_0,\vhi_0)}$ is the unique element of $\cA_0\cap\Alp(G\mid\overline{(\tR,\tvhi)})$.
Now we prove that 
\begin{equation}\label{equ-stablizer=}
	(\tG\rtimes E)_{\chi_0}=(\tG\rtimes E)_{R_0,\vhi_0}\tG'.
\addtocounter{thm}{1}\tag{\thethm}
\end{equation}

First by the proof of \cite[Thm.~2.12]{Sp12}, one has 
\begin{equation}\label{equ-global-sta1}
\tG_{\chi_0}=\bigcap_{\{\ze\in\Lin(\tG/G)\mid \tchi\ze=\tchi\}}\ker(\ze). 
\addtocounter{thm}{1}\tag{\thethm}
\end{equation}
In addition, $(\tG\rtimes E)_{\chi_0}=\tG_{\chi_0}\rtimes E_{\chi_0}$ implies
\begin{equation}\label{equ-global-sta2}
E_{\chi_0}=\{\, e\in E\mid \tchi^e=\tchi\ze\ \textrm{for some}\ \ze\in\Lin(\tG/G) \,\}. 
\addtocounter{thm}{1}\tag{\thethm}
\end{equation}
Since $\tG'\subseteq \tG_{\chi_0}$, one has that in (\ref{equ-global-sta1}) (or (\ref{equ-global-sta2})) $\tchi\ze=\tchi$ (or $\tchi^e=\tchi\ze$) only if $\ze\in\Lin_{p'}(\tG/G)$.
Let $\tG''/G$ be the Hall $p'$-subgroup of $\tG/G$.
Then \[\tG_{\chi_0}=\tG'\bigg(\Big(\bigcap_{\{\ze\in\Lin_{p'}(\tG/G)\mid \tchi\ze=\tchi\}}\ker(\ze)\Big)\cap\tG''\bigg)\]

On the other hand, by \cite[Lemma~2.15]{BS22},
\[\tG_{\overline{(R_0,\vhi_0)}}=\tG'_{\overline{(R_0,\vhi_0)}}\bigg(\Big(\bigcap_{\{\ze\in\Lin_{p'}(\tG/G)\mid \tvhi\ze=\tvhi\}}\ker(\ze)\Big)\cap\tG''\bigg).\]
Note that $\widetilde\varOmega$ is $\Lin_{p'}(\tG/G) \rtimes E_{\tB}$-equivariant, so $\tG_{\chi_0}=\tG_{\overline{(R_0,\vhi_0)}}\tG'$. 
Since $(\tG\rtimes E)_{R_0,\vhi_0} = \tG_{R_0,\vhi_0} (G\rtimes E)_{R_0,\vhi_0}$, by the proof of \cite[Thm.~3.3]{BS22}, one has
\[ E_{\overline{(R_0,\vhi_0)}}=\{\, e\in E\mid \overline{(\tR,\tvhi)}^e=\overline{(\tR,\tvhi\ze)}\ \textrm{for some}\ \ze\in\Lin_{p'}(\tG/G) \,\}\]
which implies $E_{\chi_0}=E_{\overline{(R_0,\vhi_0)}}$.

Therefore, we complete the proof of (\ref{equ-stablizer=}) and from this we can define a bijection \[\varOmega\colon\cI\to\cA/\!\sim_{\tG'}\] by letting $\varOmega(\chi_0^x)$ to be the $\tG'$-orbit of $\varOmega_0(\chi_0)^x$ for every $\chi_0\in\cI_0$ and $x\in A_B$.
By definition, $\varOmega$ is an $A_B$-equivariant bijection, and by (iv.b) it can be chosen to satisfies $\bl(\hchi)=\bl(\hvhi)^{\tG_{\chi_0}}$, where $\hchi\in\Irr(\tG_{\chi_0}\mid\chi_0)$ is the  Clifford correspondent of $\tchi$ and $\hvhi$ is the extension of $\vhi_0$ to $\N_{\tG}(R_0)_{\vhi_0}/R_0$ such that via $\Delta_{\vhi_0}$ and the induction it corresponds to $(\tR,\tvhi)$.
So by Lemma~\ref{lemma:blockinductions1}, $\bl(\chi_0)=\bl(\vhi_0)^G$ and thus $\varOmega$ preserves blocks.
Then we shall have established this theorem if  we prove Lemma~\ref{lem:establish-iso}.
\end{proof}

\begin{lem}\label{lem:establish-iso}
$(A_{\chi_0},G,\chi_0) \geqslant_{(g),b} (\N_A(R_0)_{\vhi_0},\N_G(R_0),\vhi_0)$ is normal with respect to $\N_{\tG'}(R_0)_{\vhi_0}$ for every $\chi_0\in\cI_0$ and $\varOmega_0(\chi_0)=\overline{(R_0,\vhi_0)}$.
\end{lem}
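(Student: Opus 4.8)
The plan is to manufacture, from the extensions supplied by the hypotheses, a pair $(\cP,\cP')$ of projective representations witnessing the relation, to verify $\geqslant_{(g),c}$ directly, and then to upgrade to $\geqslant_{(g),b}$ through the criterion of Theorem~\ref{thm:criterion-geqslant_{(g),b}}; the normality will drop out by twisting the construction by a linear $p$-character. First I would dispose of the group theory: by \eqref{equ-stablizer=} and (i)(c) one has $A_{\chi_0}=\N_A(R_0)_{\vhi_0}\tG'$ with $\tG'\le\tG_{\chi_0}$, so $\N_A(R_0)_{\vhi_0}\le A_{\chi_0}$ and $G\cap\N_A(R_0)_{\vhi_0}=\N_G(R_0)$; also $\C_{A_{\chi_0}}(G)=\Z(\tG)$ by (i)(b), which lies in $\N_A(R_0)_{\vhi_0}$. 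Since $\vhi_0\in\dz(\N_G(R_0)/R_0)$, the block $\bl_{\N_G(R_0)}(\vhi_0)$ has defect group $R_0$, and using the weight-covering structure together with the $A_B$-equivariance of $\varOmega$ one verifies $\C_{A_{\chi_0}}(R_0)\le\N_A(R_0)_{\vhi_0}$, which is Definition~\ref{def-block-isomrophism}(1).

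For the representations, let $\cP_{\tG}$ afford the extension $\hchi$ of $\chi_0$ to $\tG_{\chi_0}$ provided by (i)(c) (the Clifford correspondent of $\tchi$), and let $\cP_E$ afford the extension of $\chi_0$ to $G\rtimes E_{\chi_0}$ provided by (ii). As $A_{\chi_0}=\tG_{\chi_0}\rtimes E_{\chi_0}$ and $\tG_{\chi_0}\cap(G\rtimes E_{\chi_0})=G$, gluing $\cP_{\tG}$ and $\cP_E$ over $G$ gives a projective representation $\cP$ of $A_{\chi_0}$ associated with $(A_{\chi_0},G,\chi_0)$, linear on both $\tG_{\chi_0}$ and $G\rtimes E_{\chi_0}$, with factor set $\al$. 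Dually, from the extension $\hvhi$ of $\vhi_0$ to $\N_{\tG}(R_0)_{\vhi_0}/R_0$ supplied by (i)(d) and chosen as in (iv)(b), the extension of $\vhi_0$ to $(G\rtimes E)_{R_0,\vhi_0}$ supplied by (iii), the equalities $E_{\chi_0}=E_{R_0,\vhi_0}$ and $\N_A(R_0)_{\vhi_0}=\N_{\tG}(R_0)_{\vhi_0}\cdot(G\rtimes E)_{R_0,\vhi_0}$ from the proof of Theorem~\ref{thm:criterion-block}, and the fact that the two weight-side extensions overlap exactly on $\N_G(R_0)$, one builds $\cP'$ on $\N_A(R_0)_{\vhi_0}$ associated with $(\N_A(R_0)_{\vhi_0},\N_G(R_0),\vhi_0)$, linear on $\N_{\tG}(R_0)_{\vhi_0}$ and on $(G\rtimes E)_{R_0,\vhi_0}$. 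After adjusting $\cP_E$ and its weight-side counterpart by a common linear character of $E_{\chi_0}$ (Lemma~\ref{lem:change-proj}) one may assume $\al$ restricts on $\N_A(R_0)_{\vhi_0}\times\N_A(R_0)_{\vhi_0}$ to the factor set of $\cP'$; moreover $\cP(c)$ and $\cP'(c)$ carry the same scalar for $c\in\Z(\tG)$, because $\hchi$ and $\hvhi$ have the same central character on $\Z(\tG)$ — a consequence of $\bl(\hchi)=\bl(\hvhi)^{\tG_{\chi_0}}$ from (iv)(b). Hence $(\cP,\cP')$ gives $(A_{\chi_0},G,\chi_0)\geqslant_{(g),c}(\N_A(R_0)_{\vhi_0},\N_G(R_0),\vhi_0)$.

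By Theorem~\ref{thm:criterion-geqslant_{(g),b}} it remains to show that for each $x\in A_{\chi_0}[\bl(\chi_0)]$ and $J=\langle G,x\rangle$, the matrices $\cP(\Cl_J(x)^+)^*$ and $\cP'((\Cl_J(x)\cap\N_A(R_0)_{\vhi_0})^+)^*$ carry the same element of $\FF$. Along the $\tG$-direction this is clean: as $\hchi,\hvhi$ are ordinary characters, (i)(c), (i)(d), the central-character data and — for cyclic $J/G$ — the equality $\bl(\hchi)=\bl(\hvhi)^{\tG_{\chi_0}}$ together with Lemma~\ref{lemma:blockinductions1} (applicable since $\tG_{\chi_0}/G$ is abelian, so every intermediate subgroup is normal) verify the hypotheses of Lemma~\ref{lem:ectend-block-iso}, which yields $(\tG_{\chi_0},G,\chi_0)\geqslant_{(g),b}(\N_{\tG}(R_0)_{\vhi_0},\N_G(R_0),\vhi_0)$ via $(\cP\rceil_{\tG_{\chi_0}},\cP'\rceil_{\N_{\tG}(R_0)_{\vhi_0}})$, and then Lemma~\ref{lem:block-induced} gives the matching for all $y\in\tG_{\chi_0}$. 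Since $\bl(\chi_0)=\bl_{\N_G(R_0)}(\vhi_0)^G$ has a defect group $Q$ with $R_0\le Q\le G$, Dade's ramification group satisfies $A_{\chi_0}[\bl(\chi_0)]\le G\,\C_{A_{\chi_0}}(Q)\le G\,\C_{A_{\chi_0}}(R_0)\le G\,\N_A(R_0)_{\vhi_0}$; combining this containment with the explicit shape of $\cP$ and $\cP'$ on $\tG_{\chi_0}$ versus $G\rtimes E_{\chi_0}$ and a computation of the conjugacy-class sums in the spirit of the proof of Lemma~\ref{lem:block-induced} then delivers the matching for every $x\in A_{\chi_0}[\bl(\chi_0)]$. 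This last reconciliation of the $\tG$-part with the $E$-part — where $\chi_0$ need not extend to all of $A_{\chi_0}$ — is the main obstacle; I expect it is cleanest to perform it after passing to the central extension $\hG$ as in the proof of Theorem~\ref{thm:criterion-geqslant_{(g),b}}.

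For normality, take $\iota\in\Lin\big(\N_{\tG'}(R_0)_{\vhi_0}/\N_G(R_0)\C_{\N_{\tG'}(R_0)_{\vhi_0}}(G)\big)$; since $\N_{\tG'}(R_0)_{\vhi_0}/\N_G(R_0)$ is a $p$-group, $\iota$ is a linear $p$-character, and by Gallagher's theorem $\hvhi$ times the inflation of $\iota$ is again an extension of $\vhi_0$. Rerunning the construction of $\cP'$ with this new extension gives $\cP''$ associated with $\big((\N_A(R_0)_{\vhi_0})_\iota,\N_G(R_0),\vhi_0\big)$ with $\cP''(h)=\iota(h)\cP'(h)$ for $h\in\N_{\tG'}(R_0)_{\vhi_0}$, and since the $p$-character $\iota$ interferes with neither the reduction to $\tG_{\chi_0}$ nor the central-character bookkeeping, the arguments above show that $(\cP,\cP'')$ likewise gives $(A_{\chi_0},G,\chi_0)\geqslant_{(g),b}\big((\N_A(R_0)_{\vhi_0})_\iota,\N_G(R_0),\vhi_0\big)$. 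This is precisely the assertion that the relation is normal with respect to $\N_{\tG'}(R_0)_{\vhi_0}$, which completes the proof.
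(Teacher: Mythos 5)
Your overall architecture (glue two linear representations over $G$ via \cite[Lemma~2.11]{Sp12} to get $\cP$, do the same on the weight side for $\cP'$, verify $\geqslant_{(g),c}$, then upgrade to $\geqslant_{(g),b}$, and obtain normality by twisting by a linear character of $\N_{\tG'}(R_0)_{\vhi_0}/\N_G(R_0)$) matches the paper's. But there is a genuine gap at exactly the point you flag as ``the main obstacle'': you never actually establish the block condition of Definition~\ref{def-block-isomrophism}(2) in the $E$-direction, and the strategy you sketch for it cannot work as stated. You fix the extension $\cD_2$ of $\chi_0$ to $G\rtimes E_{\chi_0}$ at the outset and then hope to verify the matrix criterion of Theorem~\ref{thm:criterion-geqslant_{(g),b}} for all $x\in A_{\chi_0}[\bl(\chi_0)]$ by a direct class-sum computation. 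For an arbitrary choice of $\cD_2$ this criterion is simply false in general: the block relation along $E_{\chi_0}$ holds only for a \emph{suitable} extension. The paper's proof supplies the missing mechanism: it passes to the central extension $\widehat A$ defined by the factor set $\al$, uses the argument of \cite[Prop.~4.2]{CS15} to get the block relation for all $G_1\le J\le\tG_2$, then invokes Lemma~\ref{lem:induced-blocks2} (which rests on Dade's ramification group, \cite[Thm.~C]{KS15} and \cite[Lemma~3.2]{CS15}) to produce a linear character $\xi\in\Lin(\widehat A/\tG_2)$ such that $\xi\tchi_1$ induces the correct blocks for \emph{every} $G_1\le J\le\widehat A$, applies Lemma~\ref{lem:ectend-block-iso}, and finally replaces $\cD_2$ by $\xi\cD_2$ (which leaves the factor set unchanged). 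Your proposal contains no analogue of this correcting twist, so the $\geqslant_{(g),b}$ claim is not proved.

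Two smaller points. First, your factor-set matching step is off: twisting $\cP_E$ and its weight-side counterpart ``by a common linear character of $E_{\chi_0}$'' does not create the equality $\al\rceil_{\N_A(R_0)_{\vhi_0}\ti\N_A(R_0)_{\vhi_0}}=\al'$; in the paper this equality is \emph{computed}, using the explicit formulas $\al(x_1x_2,x_1'x_2')=\mu_{x_2}(x_1')$ and $\al'(x_1x_2,x_1'x_2')=\mu'_{x_2}(x_1')$, and the identity $\mu_{x_2}\rceil_{\N_{\tG}(R_0)_{\vhi_0}}=\mu'_{x_2}$ is deduced from the $\Lin_{p'}(\tG/G)\rtimes E_{\tB}$-equivariance of $\widetilde\varOmega$ together with the equivariance of $\Delta_{\vhi_0}$ --- this is where hypothesis (iv) genuinely enters, and your argument bypasses it. Second, deducing that $\cP(c)$ and $\cP'(c)$ carry the same scalar for $c\in\C_A(G)=\Z(\tG)$ from $\bl(\hchi)=\bl(\hvhi)^{\tG_{\chi_0}}$ alone is insufficient, since block induction only controls central characters modulo the maximal ideal; one needs the equality $\Irr(\Z(\tG)\mid\tchi)=\Irr(\Z(\tG)\mid\tvhi)$ of actual linear characters (here the hypothesis $\widetilde\cI\subseteq\Irr(\tG\mid 1_{\Z(\tG)_p})$ and the covering of weights are what make this work). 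Your treatment of the $\tG_{\chi_0}$-direction via Lemma~\ref{lem:ectend-block-iso} and your normality argument are essentially sound and agree with the paper.
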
	

\begin{proof}
First, we have $\Irr(\Z(\tG)\mid\tchi)=\Irr(\Z(\tG)\mid\tvhi)$ which implies that \[\Irr(\C_A(G)\mid\hchi)=\Irr(\C_A(G)\mid\tvhi)=\{\eps\}\] for some $\eps\in\Irr(\Z(\tG))$.

Let $\cD_1$ be a linear $\CC$-representation of $\tG_{\chi_0}$ affording $\hchi$ and $\cD_2$ be a  linear $\CC$-representation of $G\rtimes E_{\chi_0}$ with $\cD_1\rceil_G=\cD_2\rceil_G$.
According to \cite[Lemma~2.11]{Sp12}, the map $\cP\colon A_{\chi_0}\to\GL_{\chi_0(1)}(\CC)$, defined by \[\cP(x_1x_2)=\cD_1(x_1)\cD_2(x_2)\ \ \textrm{for}\ \ x_1\in\tG_{\chi_0}\ \ \textrm{and} \ \ x_2\in G\rtimes E_{\chi_0},\] is a projective $\CC$-representation associated with the character triple $(A_{\chi_0},G,\chi_0)$.
Then \[\cP(c)=\eps(c)\ \  \textrm{for}\ \ c\in\C_A(G),\] and the factor set $\al$ of $\cP$ satisfies that \[\al(x_1x_2,x_1'x_2')=\mu_{x_2}(x_1')I_{\chi_0(1)}\ \ \textrm{for}\ \ x_1,x_1'\in\tG_{\chi_0}\ \ \textrm{and}\ \ x_2,x_2'\in G\rtimes E_{\chi_0},\] where $\mu_{x_2}\in\Lin_{p'}(\tG_{\chi_0}/G)$ is defined by $\hchi=\mu_{x_2}\hchi^{x_2}$.
Here $I_{\chi_0(1)}$ denotes the identity matrix of degree $\chi_0(1)$.

Let $\cD_1'$ be a linear $\CC$-representation of $\N_{\tG}(R_0)_{\vhi_0}$ affording $\hvhi$ and $\cD_2'$ be a  linear $\CC$-representation of $(G\rtimes E)_{R_0,\vhi_0}$ with \[\cD'_1\rceil_{\N_G(R_0)}=\cD'_2\rceil_{\N_G(R_0)}.\]
Also by \cite[Lemma~2.11]{Sp12}, the map $\cP'\colon A_{R_0,\vhi_0}\to\GL_{\vhi_0(1)}(\CC)$, defined by \[\cP'(x_1x_2)=\cD_1'(x_1)\cD_2'(x_2)\ \ \textrm{for}\ \ x_1\in\N_{\tG}(R_0)_{\vhi_0}\ \ \textrm{and}\ \ x_2\in (G\rtimes E)_{R_0,\vhi_0},\] is a projective $\CC$-representation associated with the character triple $(\N_A(R_0)_{\vhi_0},\N_G(R_0),\vhi_0)$.
Then $\cP'(c)=\eps(c)I_{\vhi_0(1)}$ for $c\in\C_A(G)$, and the factor set $\al'$ of $\cP'$ satisfies that \[\al'(x_1x_2,x_1'x_2')=\mu_{x_2}'(x_1')\ \ \textrm{for}\ \ x_1,x_1'\in\N_{\tG}(R_0)_{\vhi_0}\ \ \textrm{and} \ \ x_2,x_2'\in (G\rtimes E)_{R_0,\vhi_0},\] where $\mu_{x_2}'\in\Lin_{p'}(\N_{\tG}(R_0)_{\vhi_0}/\N_G(R_0))$ is defined by $\hvhi=\mu_{x_2}'\hvhi^{x_2}$.

Recall that $\Delta_{\vhi_0}$ is $\Lin_{p'}(\N_{\tG}(\tR)_{\vhi_0}/\N_G(\tR))\rtimes \N_A(R_0)_{\vhi_0}$-equivariant.
Since $\widetilde\varOmega$ is $\Lin_{p'}(\tG/G)\rtimes E_B$-equivariant, we can deduce that $\mu_{x_2}\rceil_{\N_A(R_0)_{\vhi_0}}=\mu_{x_2}'$.
So $(\cP,\cP')$ gives \[(A_{\chi_0},G,\chi_0) \geqslant_{(g),c} (\N_A(R_0)_{\vhi_0},\N_G(R_0),\vhi_0).\]
Note that $\bl(\hchi)=\bl(\hvhi)^{\tG_{\chi_0}}$.
By Lemma~\ref{lemma:blockinductions1}, we have $\bl(\Res^{\tG_{\chi_0}}_J(\hchi))=\bl(\Res^{\N_{\tG}(R_0)_{\vhi_0}}_{\N_J(R_0)_{\vhi_0}}(\hvhi))^J$ for every $G\le J\le \tG_{\chi_0}$.

As in the proof of Theorem~\ref{thm:criterion-geqslant_{(g),b}}, the factor set $\al$ determines a group $\widehat A$.
Let $Z$ be a finite subgroup of $\CC^\ti$ that contains all the values of the factor set $\al$, and let $\widehat A$ be the group whose elements are $(a,z)$ where $a\in A_{\chi_0}$ and $z\in Z$, and whose multiplication is defined by \[(a_1,z_1)(a_2,z_2)=(a_1a_2,\al(a_1,a_2)z_1z_2)\ \ \textrm{for}\ \ a_1,a_2\in A_{\chi_0}\ \ \textrm{and}\ \ z_1,z_2\in Z.\]
The map $\eps\colon \widehat A\to A_{\chi_0}$ given by $(a,z)\to a$ is an epimorphism.
Then $\cP$ lifts to a linear representation $\widehat\cD$ of $\widehat A$, that is defined by $\widehat\cD(a,z)=z\cP(a)$ for $a\in A_{\chi_0}$ and $z\in Z$. 
Let $G_1=G\ti 1$, $\chi_1=\chi_0\circ\eps\rceil_{G_1}$ and $\tG_2=\eps^{-1}(\tG_{\chi_0})$.

Let $\widehat N=\eps^{-1}(\N_A(R_0)_{\vhi_0})$, $N_1=G_1\cap\eps^{-1}(\N_G(R_0))$ and  $\vhi_1=\vhi_0\circ\eps\rceil_{N_1}$.
Analogously, $\cP'$ lifts to a linear representation $\widehat\cD'$ of $\N_A(R_0)_{\vhi_0}$, that is defined by $\widehat\cD'(a,z)=z\cP'(a)$ for $a\in \N_A(R_0)_{\vhi_0}$ and $z\in Z$. 
As in the proof of Theorem~\ref{thm:criterion-geqslant_{(g),b}}, $(\widehat\cD,\widehat\cD')$ gives $(\widehat A, G_1,\chi_1)\geqslant_{(g),c}(\widehat N, N_1,\vhi_1)$.

Let $\tchi_1$ and $\tvhi_1$ be the characters of $\widehat A$ and $\widehat N$ afforded by $\widehat\cD$ and $\widehat\cD'$ respectively.
Similar as the proof of \cite[Prop.~4.2]{CS15}, we conclude that \[\bl(\Res^{\widehat A}_J(\tchi_1))=\bl(\Res^{\widehat N}_{J\cap\widehat N}(\tvhi_1))^J\ \ \textrm{for every}\ \ G_1\le J\le \tG_2.\]
Then by Lemma~\ref{lem:induced-blocks2}, there exists $\xi\in\Lin(\widehat A/\tG_2)$ such that $\bl(\Res^{\widehat A}_{J}(\xi\tchi_1))=\bl(\Res^{\widehat N}_{J\cap \widehat N}(\tvhi_1))^J$ for every subgroup $G_1\le J\le \widehat A$.
Thus by Lemma~\ref{lem:ectend-block-iso}, $(\xi\widehat\cD,\widehat\cD')$ gives \[(\widehat A, G_1,\chi_1)\geqslant_{(g),b}(\widehat N, N_1,\vhi_1).\]
We also regard $\xi$ as a linear character of $E_{\chi_0}\cong G\rtimes E_{\chi_0}/G$.
In the construction of $\cP$, we may change $\cD_2$ by $\xi\cD_2$ so that the factor set of $\cP$ remains unchanged. 
Accordingly $(\cP,\cP')$ gives \[(A_{\chi_0},G,\chi_0) \geqslant_{(g),b} (\N_A(R_0)_{\vhi_0},\N_G(R_0),\vhi_0).\]

Finally, we show that $(\cP,\cP')$ gives \[(A_{\chi_0},G,\chi_0) \geqslant_{(g),b} (\N_A(R_0)_{\vhi_0},\N_G(R_0),\vhi_0)\] normally with respect to $\N_{\tG'}(R_0)_{\vhi_0}$.
Let $\iota\in\Irr(\N_{\tG'}(R_0)_{\vhi_0}/\N_G(R_0)\Z(\tG'))$. Then $\iota$ is $\N_G(R_0)_{\vhi_0}$-stable, and thus \[\N_A(R_0)_{\vhi_0,\iota}=\N_G(R_0)_{\vhi_0}(G\rtimes E)_{R_0,\vhi_0,\iota}.\]
Let $\tG''/G$ be the Hall $p'$-subgroup of $\tG/G$ and let $\widehat\iota\in\Irr(\N_G(R_0)_{\vhi_0}\mid 1_{\N_{\tG''}(R_0)_{\vhi_0}})$ be the (unique) extension of $\iota$ to $\N_G(R_0)_{\vhi_0}$. 
We have $\N_A(R_0)_{\vhi_0,\iota}=\N_A(R_0)_{\vhi_0,\widehat\iota}$.
Then the map \[\cP''\colon A_{R_0,\vhi_0,\iota}\to\GL_{\vhi_0(1)}(\CC),\] defined by \[\cP''(x_1x_2)=\widehat\iota(x_1)\cD_1'(x_1)\cD_2'(x_2) \ \ \textrm{for} \ \ x_1\in\N_{\tG}(R_0)_{\vhi_0}\ \ \textrm{and} \ \ x_2\in (G\rtimes E)_{R_0,\vhi_0,\iota},\] is a projective $\CC$-representation associated with the character triple $(\N_A(R_0)_{\vhi_0,\iota},\N_G(R_0),\vhi_0)$.
Then $\cP''(x)=\iota(x)\cP'(x)$ for $x\in\N_{\tG'}(R_0)_{\vhi_0}$ and as above we can prove that $(\cP,\cP'')$ gives \[(A_{\chi_0},G,\chi_0) \geqslant_{(g),b}(\N_A(R_0)_{\vhi_0,\iota},\N_G(R_0),\vhi_0).\]
Thus we complete the proof.
\end{proof}

We end this subsection by proving the following result, which is devoted to establishing condition (iv.b) of Theorem~\ref{thm:criterion-block} for some special situations.

\begin{prop}\label{prop:condition-iv-b}
In Theorem~\ref{thm:criterion-block}, we assume further that $\tG/G$ is cyclic and for every block $\widetilde b$ in $\tB$ and every $\tchi\in\Irr(\widetilde b)\cap\widetilde\cI$, the number of blocks of $G$ covered by $\widetilde b$ equals the cardinality of $\Irr(G\mid\tchi)$.
If the conditions (i), (ii), (iii) and (iv.a) are satisfied, then the condition (iv.b) holds.
\end{prop}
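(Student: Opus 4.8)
The plan is to convert the numerical hypothesis into a Fong--Reynolds rigidity statement that pins down the blocks occurring in (iv.b). Fix $\tchi\in\widetilde\cI$, write $\widetilde b=\bl(\tchi)$ and $\overline{(\tR,\tvhi)}=\widetilde\varOmega(\tchi)$; note $\tchi\in\Irr(\widetilde b)\cap\widetilde\cI$ with $\widetilde b\in\tB$. Since $\tG/G$ is abelian, $\Irr(G\mid\tchi)$ is a single $\tG$-orbit (of size $|\tG:\tG_{\chi}|$ for $\chi\in\Irr(G\mid\tchi)$), the blocks of $G$ covered by $\widetilde b$ form a single $\tG$-orbit, and the $\tG$-equivariant surjection $\chi\mapsto\bl(\chi)$ between them is, by the counting hypothesis, a \emph{bijection}; in particular $\tG_{\chi}=\tG_{\bl(\chi)}$ for all $\chi\in\Irr(G\mid\tchi)$. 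So first I would invoke Fong--Reynolds for $G\unlhd\tG$ at the block $\bl(\chi_0)$: block induction gives a bijection from the blocks of $\tG_{\chi_0}$ covering $\bl(\chi_0)$ onto the blocks of $\tG$ covering $\bl(\chi_0)$. With $\hchi\in\Irr(\tG_{\chi_0}\mid\chi_0)$ the Clifford correspondent of $\tchi$, this yields $\bl(\hchi)^{\tG}=\bl(\tchi)=\widetilde b$ and, crucially, that $\bl(\hchi)$ is the \emph{unique} block of $\tG_{\chi_0}$ covering $\bl(\chi_0)$ and inducing to $\widetilde b$.

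Next I would compute $\bl(\hvhi)^{\tG_{\chi_0}}$ for a weight $(R_0,\vhi_0)$ of $G$ covered by $(\tR,\tvhi)$ and satisfying (iii), with $\hvhi$ as in (iv.b). Every extension of $\vhi_0$ to $\N_{\tG}(R_0)_{\vhi_0}$ restricts to the $\N_{\tG}(R_0)_{\vhi_0}$-invariant extension of $\vhi_0$ to $\tR\N_G(R_0)$, so $\hvhi$ lies in the domain of $\Delta_{\vhi_0}$. Using the defining property of $\hvhi$ (its image under $\Delta_{\vhi_0}$ induces to $\tvhi$), the block compatibility of $\Delta_{\vhi_0}$ recorded in Section~\ref{sec:conclusion}, the Clifford correspondence (Fong--Reynolds) for $\tvhi$ over $\overline\pi_{\tR/R_0}(\vhi_0)$, transitivity of block induction, and hypothesis (iv.a), I would obtain
\[\bl(\hvhi)^{\tG}=\bl_{\N_{\tG}(\tR)}(\tvhi)^{\tG}=\widetilde b.\]
By the argument establishing \eqref{equ-stablizer=} (which uses only (i)--(iv.a)) one has $\N_{\tG}(R_0)_{\vhi_0}=\tG_{R_0,\vhi_0}\le\tG_{\chi_0}$, so $\bl(\hvhi)^{\tG_{\chi_0}}$ is defined; by \cite[Lemma~3.2]{KS15} applied to $G\unlhd\tG_{\chi_0}$ it covers $\bl_{\N_G(R_0)}(\vhi_0)^G$, and by transitivity of block induction its induction to $\tG$ equals $\widetilde b$.

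Putting these together, the desired identity $\bl(\hchi)=\bl(\hvhi)^{\tG_{\chi_0}}$ holds as soon as one can arrange $\bl(\chi_0)=\bl_{\N_G(R_0)}(\vhi_0)^G$: both sides are then blocks of $\tG_{\chi_0}$ covering $\bl(\chi_0)$ and inducing to $\widetilde b$, hence equal by the uniqueness in the first paragraph. (Conversely, this block matching is forced by (iv.b), since $\bl(\hchi)$ covers $\bl(\chi_0)$ while $\bl(\hvhi)^{\tG_{\chi_0}}$ covers $\bl_{\N_G(R_0)}(\vhi_0)^G$.) So the remaining task is to choose a character $\chi_0$ satisfying (ii) and a weight $(R_0,\vhi_0)$ satisfying (iii) and covered by $(\tR,\tvhi)$ with $\bl(\chi_0)=\bl_{\N_G(R_0)}(\vhi_0)^G$. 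For this I would again use the bijection $\chi\mapsto\bl(\chi)$ from the first paragraph together with \eqref{equ-global-sta1} and \eqref{equ-stablizer=}, which force $\tG_{\chi_0}=\N_{\tG}(R_0)_{\vhi_0}\tG'$: since both $\bl(\chi_0)$ and $\bl_{\N_G(R_0)}(\vhi_0)^G$ lie in the single $\tG$-orbit of blocks of $G$ covered by $\widetilde b$ and have the same stabiliser $\tG_{\chi_0}$, the two labels should be alignable by an admissible modification of one of the two sides.

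The step I expect to be the main obstacle is exactly this last coordination. The families of characters satisfying (ii) and of weights satisfying (iii) need not be stable under $\tG$-conjugation — in $A=\tG\rtimes E$ the subgroup $G\rtimes E$ is not $\tG$-invariant — so a matching pair of blocks cannot be obtained by a naive transport of structure; instead one has to extract it from the rigidity provided by the counting hypothesis and the cyclicity of $\tG/G$ (the latter making $\Irr(\tG_{\chi_0}\mid\chi_0)$, and its weight-side analogue, consist of linear twists of a fixed extension), together with the stabiliser identities \eqref{equ-global-sta1}--\eqref{equ-stablizer=}.
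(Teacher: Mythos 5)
Your overall architecture is the same as the paper's: use the counting hypothesis to get $\tG_{\chi}=\tG_{\bl(\chi)}$, invoke Fong--Reynolds to see that $\bl(\hchi)$ is the \emph{unique} block of $\tG_{\chi_0}$ covering $\bl(\chi_0)$ and covered by $\bl(\tchi)$, check that $\bl(\hvhi)^{\tG_{\chi_0}}$ covers $\bl_{\N_G(R_0)}(\vhi_0)^G$ and is covered by $\bl(\tchi)$, and reduce everything to arranging $\bl(\chi_0)=\bl_{\N_G(R_0)}(\vhi_0)^G$. All of that is correct. But you then stop exactly at the decisive point: you explicitly flag the block-alignment step as ``the main obstacle'' and offer only that ``the two labels should be alignable by an admissible modification of one of the two sides.'' That is not a proof, and it is precisely the non-trivial content of the statement --- as you yourself observe, a naive $\tG$-conjugation of $\chi_0$ may destroy condition~(ii), since $G\rtimes E$ is not $\tG$-invariant. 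So there is a genuine gap.

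The paper closes this gap with a concrete stabiliser computation. Suppose $(R_0,\vhi_0)$ is a $b^g$-weight with $b=\bl(\chi_0)$ and $g\in\tG\setminus\tG_b$. The counting hypothesis gives $\tG_b=\tG_{\chi_0}$, and the identity $(\tG\rtimes E)_{\chi_0}=\tG'(\tG\rtimes E)_{\overline{(R_0,\vhi_0)}}$ from the proof of Theorem~\ref{thm:criterion-block} (which only uses (i)--(iv.a)) shows $\tG_{\chi_0}\rtimes E_{\chi_0}\le(\tG\rtimes E)_{b^g}$, because the stabiliser of a weight in $\Alp(b^g)$ stabilises $b^g$. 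Likewise $(\tG\rtimes E)_{\chi_0^g}=\tG_{\chi_0}E_{\chi_0}^g\le(\tG\rtimes E)_{b^g}$, so for $e\in E_{\chi_0}$ one gets $[e,g]=e^{-1}e^g\in\tG\cap(\tG\rtimes E)_{b^g}=\tG_{b^g}=\tG_b=\tG_{\chi_0}$, hence $[\langle\tG_{\chi_0},g\rangle,E_{\chi_0}]\subseteq\tG_{\chi_0}$ and therefore $E_{\chi_0}^g\le\tG_{\chi_0}E_{\chi_0}$. From this $(\tG\rtimes E)_{\chi_0^g}=\tG_{\chi_0^g}\rtimes E_{\chi_0^g}$, i.e.\ $\chi_0^g$ still satisfies condition~(ii), and $\chi_0^g$ now lies in the same block $b^g$ as $(R_0,\vhi_0)$. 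Replacing $\chi_0$ by $\chi_0^g$ and then running your Fong--Reynolds uniqueness argument finishes the proof. Without an argument of this kind your proposal does not establish the proposition; note also that the cyclicity of $\tG/G$ plays no explicit role here beyond what the counting hypothesis already provides, so your suggestion to exploit it via linear twists of a fixed extension points in a direction the actual proof does not need.
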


\begin{proof}
Let $\tchi$ and $\chi_0$ be characters in (ii) and let $(\tR,\tvhi)$ and $(R_0,\vhi_0)$ be the weight in (iii) such that $\overline{(\widetilde R,\widetilde\vhi)}=\widetilde\varOmega(\tchi)$.

We first prove that the character $\chi_0$ and the weight $(R_0,\vhi_0)$ can be chosen in the same block of~$G$.
Let $b=\bl(\chi_0)$ and suppose that $(R_0,\vhi_0)$ is a $b^g$-weight for some $g\in\tG$.
By our assumption, $\tG_b=\tG_{\chi_0}$.
We may assume that $g\notin\tG_{b}$.
By the proof of Theorem~\ref{thm:criterion-block}, we have \[\tG_{\chi_0}\rtimes E_{\chi_0}=(\tG\rtimes E)_{\chi_0}=\tG'(\tG\rtimes E)_{\overline{(R_0,\vhi_0)}}.\]
Since $\overline{(R_0,\vhi_0)}\in\Alp(b^g)$, we have \[\tG_{\chi_0}\rtimes E_{\chi_0}=\tG'(\tG\rtimes E)_{\overline{(R_0,\vhi_0)}}\le\tG'(\tG\rtimes E)_{b^g}=(\tG\rtimes E)_{b^g}.\]
Also $(\tG\rtimes E)_{\chi_0^g}=\tG_{\chi_0}E^{g}_{\chi_0}\le (\tG\rtimes E)_{b^g}$.
In particular, both $E_{\chi_0}$ and $E_{\chi_0}^g$ are contained in $(\tG\rtimes E)_{b^g}$.
For $e\in E_{\chi_0}$, we have $[e,g]=e^{-1}e^{g}\in (\tG\rtimes E)_{b^g}$. This  implies that \[[\langle \tG_{\chi_0}, g\rangle,E_{\chi_0}]\subseteq \tG_{b^g}=\tG_b=\tG_{\chi_0}.\]
So $(\tG\rtimes E)_{\chi_0^g}=\tG_{\chi_0}E_{\chi_0}^g\le\tG_{\chi_0}E_{\chi_0}$ and from this $(\tG\rtimes E)_{\chi_0^g}=\tG_{\chi_0^g}\rtimes E_{\chi_0^g}$.
Thus the character $\chi_0^g$ also satisfies (ii) and $\chi_0^g$ and $(R_0,\vhi_0)$ lie in the same block.

Let $\hchi\in\Irr(\tG_{\chi_0}\mid\chi_0)$ is the  Clifford correspondent of $\tchi$.
By a theorem of Fong--Reynolds \cite[Thm.~5.5.10]{NT89}, $\bl(\hchi)$ is the unique block of $\tG_{\chi_0}=\tG_{\bl(\chi_0)}$ which covers $\bl(\chi_0)$ and is covered by $\bl(\tchi)$.
Recall that $\Delta_{\vhi_0}$ is a bijection between $\rdz(\N_{\tG}(R_0)_{\vhi_0}\mid\hvhi_0)$ and $\dz(\N_{\tG}(\tR)_{\vhi_0}/\tR\mid\overline\pi_{\tR/R}(\vhi_0))$ where $\hvhi_0$ is an extension of $\vhi_0$ to $\N_G(R_0)\tR$.
Let $\hvhi\in\rdz(\N_{\tG}(R_0)_{\vhi_0}\mid\tvhi_0)$ and $\hvhi'=\Delta_{\vhi_0}'(\hvhi)$ such that $\tvhi=\Ind^{\N_{\tG}(\tR)}_{\N_{\tG}(\tR)_{\vhi_0}}(\hvhi')$.
Then $\bl(\hvhi)=\bl(\hvhi')^{\N_{\tG}(R_0)_{\vhi_0}}$.
By \cite[Lemma~2.3]{KS15}, $\bl(\hvhi)^{\tG_{\chi_0}}$ covers $\bl(\vhi_0)^G=\bl(\chi_0)$, and $\bl(\tvhi)^{\tG}=\bl(\tchi)$ covers $\bl(\hvhi)^{\tG_{\chi_0}}$.
So $\bl(\hchi)=\bl(\hvhi)^{\tG_{\chi_0}}$ and this completes the proof.
\end{proof}

\subsection{A going-up property}

\begin{prop}\label{prop-iso-special-ell'}
	Let $G$ and $\tG$ be normal subgroups of a finite group $A$ such that $\tG/G$ is an abelian $p'$-group.
	Let $B$ be a union of blocks of $G$ which is  $A$-stable.
	Suppose that $\cI\subseteq \Irr(B)$ and $\cA\subseteq \Alp(B)$ are $A$-stable such that there is a blockwise $A$-equivariant bijection $\varOmega\colon\cI \ \to\ \cA$ such that for every $\chi\in\cI$ and  $\overline{(R,\vhi)}=\varOmega(\chi)$, one has
	\[(A_\chi,G,\chi) \geqslant_{b} (\N_A(R)_\vhi,\N_G(R),\vhi).\]
	Assume further that every  character $\chi\in\cI$ extends to $\tG_\chi$. 
	Then there exists a blockwise $(\Lin(\tG/G)\rtimes A)$-equivariant bijection \[\widetilde\varOmega\colon\Irr(\tG\mid\cI)\to\Alp(\tG\mid\cA)\] such that for every $\tchi\in\Irr(\tG\mid\cI)$ and $\overline{(\tR,\tvhi)}=\widetilde\varOmega(\tchi)$,  \[(A_{\tchi},\tG,\tchi)\geqslant_b(\N_A(\tR)_{\tvhi},\N_{\tG}(\tR),\tvhi).\]
\end{prop}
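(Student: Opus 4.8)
The plan is to push the hypothesis down to the stabiliser of $\chi$, ``go up'' the character-triple relation there along the $p'$-extension using the extendibility hypothesis, and then re-induce to $\tG$; the assumption that $\tG/G$ be a $p'$-group keeps the weight side of the picture as simple as the character side. Concretely, since $\tG/G$ is a $p'$-group a Sylow $p$-subgroup of $\tG$ lies in $G$, so every weight of $\tG$ is $\tG$-conjugate to one whose vertex lies in $G$; moreover the $p$-quotient in the definition of the covering of weights degenerates, the Dade--Glauberman--Nagao correspondence and the bijection $\Delta_\vhi$ become the identity, and ``$(\tR,\tvhi)$ covers $(R,\vhi)$'' just means $\tR=R$ and $\tvhi\in\dz(\N_{\tG}(R)/R\mid\vhi)$. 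I would fix an $A$-transversal $\cI_0$ of $\cI$ and, for $\chi\in\cI_0$, a representative $(R,\vhi)$ of $\varOmega(\chi)$; the hypothesis gives $A_\chi=G\,\N_A(R)_\vhi$. Two group identities then follow: $\N_{\tG}(R)_\vhi=\tG_\chi\cap\N_A(R)_\vhi$ (any $g$ on the left fixes the weight $(R,\vhi)$, hence its $G$-orbit in $\cA$, hence $\chi$ by $A$-equivariance of $\varOmega$), and $\tG_\chi=G\,\N_{\tG}(R)_\vhi$ (intersect $A_\chi=G\,\N_A(R)_\vhi$ with $\tG_\chi\supseteq G$ and use the Dedekind identity). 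Set $V:=\tG_\chi/G\cong\N_{\tG}(R)_\vhi/\N_G(R)$, an abelian $p'$-group.

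Since $\geqslant_{(g),b}$ passes to intermediate subgroups (the remark after Remark~\ref{rmk:gen-isomorphisms}), the hypothesis restricts to $(\tG_\chi,G,\chi)\geqslant_b(\N_{\tG}(R)_\vhi,\N_G(R),\vhi)$; let $\sigma$ denote its maps from Theorem~\ref{thm:iso-char-triple}. Because $J=\tG_\chi$ satisfies $J=G\,(J\cap\N_A(R)_\vhi)$, the map $\sigma_{\tG_\chi}$ is a bijection (Theorem~\ref{thm:iso-char-triple}(d)) preserving degree ratios over the normal subgroups; as $\chi$ extends to $\tG_\chi$, a chosen extension $\chi'$ is sent to an extension $\vhi':=\sigma_{\tG_\chi}(\chi')$ of $\vhi$ to $\N_{\tG}(R)_\vhi$, and by Theorem~\ref{thm:iso-char-triple}(b) and Definition~\ref{def-block-isomrophism}(2) one has $\sigma_{\tG_\chi}(\chi'\mu)=\vhi'\mu^{\ast}$ and $\bl(\chi'\mu)=\bl(\vhi'\mu^{\ast})^{\tG_\chi}$ for all $\mu\in\Lin(V)$, where $\mu^{\ast}\in\Lin(\N_{\tG}(R)_\vhi/\N_G(R))$ is the image of $\mu$ and $\vhi'\mu^{\ast}\in\dz(\N_{\tG}(R)_\vhi/R)$ since $R\le\ker(\vhi'\mu^{\ast})$ and $V$ is a $p'$-group. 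I would then define, for $\psi\in\Irr(\tG_\chi\mid\chi)$,
\[\widetilde\varOmega\bigl(\Ind_{\tG_\chi}^{\tG}(\psi)\bigr):=\overline{\bigl(R,\ \Ind_{\N_{\tG}(R)_\vhi}^{\N_{\tG}(R)}\bigl(\sigma_{\tG_\chi}(\psi)\bigr)\bigr)},\]
and extend $\widetilde\varOmega$ to all of $\Irr(\tG\mid\cI)$ by $A$-equivariance through $\cI_0$. That this is a well-defined blockwise $(\Lin(\tG/G)\rtimes A)$-equivariant bijection is then a matter of the Clifford correspondence on both sides, Gallagher's theorem, and the equivariance properties Theorem~\ref{thm:iso-char-triple}(b),(c); block-preservation will follow from the triple relation below by taking $J=\tG$ in Definition~\ref{def-block-isomrophism}(2).

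It remains to prove, for $\chi\in\cI_0$, $\tchi=\Ind_{\tG_\chi}^{\tG}(\chi'\mu)$ and $\tvhi=\Ind_{\N_{\tG}(R)_\vhi}^{\N_{\tG}(R)}(\vhi'\mu^{\ast})$, that $(A_{\tchi},\tG,\tchi)\geqslant_b(\N_A(R)_{\tvhi},\N_{\tG}(R),\tvhi)$. A stabiliser computation first identifies $(A_\chi)_{\chi'\mu}=\tG_\chi\,\N_A(R)_{\vhi'\mu^{\ast}}$ (using the $\sigma$-equivariance of Theorem~\ref{thm:iso-char-triple}(c)) and then, via the Clifford correspondence, $A_{\tchi}=\tG\,(A_\chi)_{\chi'\mu}=\tG\,\N_A(R)_{\tvhi}$ and $\N_{\tG}(R)=\tG\cap\N_A(R)_{\tvhi}$, so both character triples are defined. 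To build associated projective representations I would, as in the proofs of Theorem~\ref{thm:criterion-geqslant_{(g),b}} and Lemma~\ref{lem:establish-iso}, pass to a central extension $\widehat A$ attached to the relevant factor set so that $\chi,\chi',\vhi,\vhi'$ are afforded by linear representations; on $\widehat A$ the characters $\tchi$, $\tvhi$ are afforded by ordinary induced representations, which I would extend across $\N_A(R)_{\tvhi}$ (hence across $A_{\tchi}=\tG\,\N_A(R)_{\tvhi}$) using the linear extensions supplied by $\geqslant_{(g),c}$ at the $\tG_\chi$-level, obtaining a pair $(\cP,\cP')$ realising $(A_{\tchi},\tG,\tchi)\geqslant_{(g),c}(\N_A(R)_{\tvhi},\N_{\tG}(R),\tvhi)$ (the scalars on $\C_A(\tG)\le\C_A(G)$ match by the $G$-level relation). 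Finally I would verify the block condition using Theorem~\ref{thm:criterion-geqslant_{(g),b}}: for $x\in A_{\tchi}[\bl(\tchi)]$ the scalars of $\cP(\Cl_{\langle\tG,x\rangle}(x)^{+})^{\ast}$ and $\cP'((\Cl_{\langle\tG,x\rangle}(x)\cap\N_A(R)_{\tvhi})^{+})^{\ast}$ agree; by the formulas for the reduced central characters of induced (projective) representations (Lemma~\ref{lem:central-char}, Proposition~\ref{prop:induced-extensions}, Lemma~\ref{lem:block-induced}) together with the fact that $\tG/G$ being $p'$ forces the relevant Dade-ramification elements to reduce to ones supported on $\tG_\chi$, this comes down to the identity $\bl(\chi'\mu)=\bl(\vhi'\mu^{\ast})^{\tG_\chi}$ secured in the previous paragraph.

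The step I expect to be the main obstacle is this last block verification: the central and strong-isomorphism part is a fairly formal consequence of Theorem~\ref{thm:iso-char-triple}, Lemma~\ref{lem:iso-char-triples1} and the induced-representation construction, but matching the reduced central characters of the two induced representations at the Dade-ramification elements --- i.e.\ checking that passing from $G$ up to $\tG$ creates no new ramification --- is the delicate computational heart of the argument, and this is precisely where the hypothesis that $\tG/G$ be a $p'$-group is indispensable.
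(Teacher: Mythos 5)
Your construction of $\widetilde\varOmega$ is the same as the paper's: exploit that $\tG/G$ being a $p'$-group freezes the weight vertex (so covering degenerates to $\tR=R$ and the DGN map is trivial), restrict the given relation to $\tG_\chi=G\,\N_{\tG}(R)_\vhi$, send the Clifford correspondent $\hchi\in\Irr(\tG_\chi\mid\chi)$ of $\tchi$ to $\sigma_{\tG_\chi}(\hchi)$ and induce back up on the weight side. Your group-theoretic identities and the use of Theorem~\ref{thm:iso-char-triple}(b)--(d) are all correct, and the well-definedness/equivariance issues are handled the same way the paper handles them (via an equivariant choice of the maps $\sigma^{(\chi)}$ along the orbit).

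The divergence, and the gap, is in the final step. The paper proceeds in two moves: first it lifts the relation \emph{above the stabiliser}, producing $(\widehat A,\tG_\chi,\hchi)\geqslant_b(\widehat N,\N_{\tG}(R)_\vhi,\hvhi)$ with $\widehat A=\N_A(\tG_\chi)_{\hchi}$ by the tensor-decomposition argument of \cite[Prop.~3.9]{NS14}, and then it induces the resulting projective representations, invoking the proof of \cite[Thm.~3.14]{NS14} to conclude that $(\Ind_{\widehat A,\tG}^{A_{\tchi}}(\widehat\cP),\Ind^{\N_{A}(R)_{\tvhi}}_{\widehat N,\N_{\tG}(R)}(\widehat\cP'))$ still gives a block isomorphism. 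You instead propose to verify the block condition for the induced pair directly via Theorem~\ref{thm:criterion-geqslant_{(g),b}}, i.e.\ by matching reduced central characters at elements of the Dade ramification group $A_{\tchi}[\bl(\tchi)]$ --- and you yourself flag this as the unresolved ``computational heart''. Two concrete problems: (i) your claim that ``$\tG/G$ being $p'$ forces the relevant Dade-ramification elements to reduce to ones supported on $\tG_\chi$'' is not justified and is not the mechanism at work --- the reduction of the central-character sums for an induced projective representation to data over the stabiliser is a coset computation that holds for arbitrary index, and carrying it out is exactly the content of the proof of \cite[Thm.~3.14]{NS14}; (ii) you skip the intermediate going-up step over $\N_A(\tG_\chi)_{\hchi}$, without which the pair of projective representations you want to induce from is not yet known to give $\geqslant_b$ (only $\geqslant_{(g),c}$), so the criterion has nothing to feed on at the level of $A_{\tchi}$. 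Both gaps close immediately by citing Propositions~3.9 and Theorem~3.14 of \cite{NS14}, as the paper does; if you insist on a self-contained verification via Theorem~\ref{thm:criterion-geqslant_{(g),b}}, you must actually perform the coset decomposition of $\Cl_{\langle\tG,x\rangle}(x)$ relative to $\tG_\chi$ and track the scalars, which is a substantial argument rather than a remark.
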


\begin{proof}
	Let $\chi\in\cI$ and $\overline{(R,\vhi)}=\varOmega(\chi)$ so that \[(A_\chi,G,\chi) \geqslant_{b} (\N_A(R)_\vhi,\N_G(R),\vhi),\] which is given by $(\cP,\cP')$.
	Let $\al$ and $\al'$ be the factor sets of $\cP$ and $\cP'$ respectively and let $\sigma^{(\chi)}$ be the isomorphism between character triples $(A_\chi,G,\chi)$ and $(\N_A(R)_\vhi,\N_G(R),\vhi)$ defined by $(\cP,\cP')$ as in Theorem~\ref{thm:iso-char-triple}.
By an analogous argument as in the proof of \cite[Prop.~4.7]{NS14} we can choose $\sigma^{(\chi)}$ such that \[\sigma_J^{(\chi)}(\xi)^x=\sigma_{J^x}^{(\chi^x)}(\xi^x)\ \ \textrm{for every}\ \ \chi\in\cI,\ G\le J\le A_\chi,\ \xi\in\Irr(J\mid\chi) \ \ \textrm{and}\ \  x\in\N_J(R).\]
Since $\tG/G$ is an abelian $p'$-group, a weight of $\tG$ covering $(R,\vhi)$ is of form $(R,\tvhi)$ with $\tvhi\in\Irr(\N_{\tG}(R)\mid\tvhi)$.

Let $\hchi\in\Irr(\tG_\chi\mid\chi)$ and $\hvhi=\sigma^{(\chi)}_{\tG_\chi}(\hchi)$. Then $\hchi$ is an extension of $\chi$ and $\hvhi$ is an extension of $\vhi$.
Let $\widehat A=\N_A(\tG_\chi)_{\hchi}$ and $\widehat N=\N_{\N_A(R)}(\tG_\chi)_{\hvhi}$.
Suppose that $\hchi$ is afforded by $\cQ\otimes\cP\rceil_{\tG_\chi}$ where $\cQ$ is a projective $\CC$-representation of $\tG_\chi/G$.
Then $\hvhi$ is afforded by $\cQ\rceil_{\N_{\tG}(R)_\vhi}\otimes\cP'\rceil_{\N_{\tG}(R)_\vhi}$.
Let $\widehat\cP$ be a projective $\CC$-representation associated with $(\widehat A,\tG_\chi,\hchi)$ such that $\widehat\cP\rceil_{\tG_\chi}=\cQ\otimes\cP\rceil_{\tG_\chi}$.
Suppose that $\beta$ is the factor set of $\widehat\cP$.
As in the proof of \cite[Prop.~3.9]{NS14}, we may write \[\widehat\cP(g)=\cU(g)\otimes\cP(g)\ \ \textrm{for}\ \ g\in\widehat A\] where $\cU$ is a projective $\CC$-representation of $\widehat A$ with factor set $\beta(\al\rceil_{\widehat A\ti\widehat A})^{-1}$.
Define \[\widehat\cP'=\cU\rceil_{\widehat N}\otimes\cP'\rceil_{\widehat N},\] which is a projective $\CC$-representation of associated with $(\widehat N,\N_{\tG}(R)_\vhi,\hvhi)$.
As in the proof of \cite[Prop.~3.9]{NS14} (using the proof of \cite[Thm.~(8.16)]{Na98}), we can prove that $(\widehat\cP,\widehat\cP')$ gives \[(\widehat A,\tG_\chi,\hchi)\geqslant_b(\widehat N,\N_{\tG}(R)_\vhi,\hvhi).\]

For $\tchi\in\Irr(\tG\mid\cI)$, we let $\chi\in\Irr(G\mid\tchi)$ and $\hchi\in\Irr(\tG_\chi\mid\chi)$ such that $\tchi=\Ind_{\tG_\chi}^{\tG}(\hchi)$.
Then we claim that $\tvhi:=\Ind^{\N_{\tG}(R)}_{\N_{\tG}(R)_\vhi}(\sigma^{(\chi)}_{\tG_\chi}(\hchi))$ is well-defined and independent of the choice of $\chi$.
In fact, if we take $\chi'\in\Irr(G\mid\tchi)$ and $\hchi'\in\Irr(\tG_{\chi'}\mid\chi')$ such that $\tchi=\Ind_{\tG_{\chi'}}^{\tG}(\hchi')$, then there exists $g\in\tG$ such that $\chi'=\chi^g$ and $\hchi'=\hchi^g$.
Since $\varOmega$ is $\tG$-equivariant, we may assume that $g\in\N_{\tG}(Q)$.
From this, \[\sigma_{\tG_{\chi}}^{(\chi)}(\hchi)^g=\sigma_{\tG_{\chi^g}}^{(\chi^g)}(\hchi^g)=\sigma_{\tG_{\chi'}}^{(\chi')}(\hchi')\] and hence $\Ind^{\N_{\tG}(R)}_{\N_{\tG}(R)_\vhi}(\sigma^{(\chi')}_{\tG_{\chi'}}(\hchi'))=\Ind^{\N_{\tG}(R)}_{\N_{\tG}(R)_\vhi}(\sigma^{(\chi)}_{\tG_\chi}(\hchi))$.
Thus the claim holds.

Let $\mathcal T$ be a complete set of representatives of $\tG$-orbit in $\cI$.
Then by the above argument, for every $\chi\in\mathcal T$ and $\varOmega(\chi)=\overline{(R,\vhi)}$, we can define a bijection 
\[\Irr(\tG\mid\chi)\to\Irr(\N_{\tG}(R)\mid\vhi),\quad\tchi\mapsto\tvhi.\]
Since $\tchi=\Ind_{\tG_\chi}^{\tG}(\hchi)$ and $\tvhi=\Ind^{\N_{\tG}(R)}_{\N_{\tG}(R)_\vhi}(\hvhi)$, by Clifford theory, we have $A_{\tchi}=\tG\widehat A$ and $\N_{A}(R)_{\tvhi}=\N_{\tG}(R)\widehat N$.
By the proof of \cite[Thm.~3.14]{NS14}, $(\Ind_{\widehat A,\tG}^{A_{\tchi}}(\widehat\cP),\Ind^{\N_{A}(R)_{\tvhi}}_{\widehat N,\N_{\tG}(R)}(\widehat\cP'))$ gives  
\[(A_{\tchi},\tG,\tchi)\geqslant_b(\N_A(R)_{\tvhi},\N_{\tG}(R),\tvhi).\]

Therefore, if $\chi$ runs over $\mathcal T$, then we can obtain a bijection \[\widetilde\varOmega\colon\Irr(\tG\mid\cI)\to\Alp(\tG\mid\cA).\] It can be checked directly that $\widetilde\varOmega$ is $(\Lin(\tG/G)\rtimes A)$-equivariant and preserves blocks, and satisfies that for every $\tchi\in\Irr(\tG\mid\cI)$ and $\overline{(\tR,\tvhi)}=\widetilde\varOmega(\tchi)$, \[(A_{\tchi},\tG,\tchi)\geqslant_b(\N_A(\tR)_{\tvhi},\N_{\tG}(\tR),\tvhi).\]
Thus we complete the proof.
\end{proof}

Now we generalize Proposition~\ref{prop-iso-special-ell'} and prove:

\begin{thm}\label{thm:Clifford-weights}
	Let $G$, $\tG'$ and $\tG$ be normal subgroups of a finite group $A$ such that $G\subseteq \tG'\subseteq \tG$.
	Let $B$ be a union of blocks of $G$ which is  $A$-stable and let $\tB$ be the union of blocks of $\tG$ which cover a block in $B$.
	Suppose that $\cI\subseteq \Irr(B)$, $\widetilde{\cI}\subseteq\Irr(\tG\mid\cI)\cap\Irr(\tG\mid1_{\Z(\tG')_p})$ and $\cA\subseteq \Alp(B)$ are $A$-stable such that the following conditions hold.
	\begin{enumerate}[\rm(i)]
		\item $\tG/G$ is abelian, $\C_{\tG'}(G)=\Z(\tG')$ and $\tG'/G=(\tG/G)_p$.
		\item Every  character $\chi\in\cI$ extends to  $\tG_{\chi}$.
		\item For every $\chi\in\cI$, $\Irr(\tG\mid\chi)\cap \widetilde\cI$ is a non-empty $\Lin_{p'}(\tG/G)$-orbit.
		\item There is a blockwise $A$-equivariant bijection 
		\[\varOmega\colon\cI/\!\sim_{\tG'} \ \to\ \cA/\!\sim_{\tG'}\] such that for every $\chi\in\cI$, there is a weight $(R,\vhi)$ in $\cA$ whose $\tG'$-orbit corresponds to the $\tG'$-orbit of $\chi$ via $\varOmega$ satisfying that $\N_A(R)_\vhi\subseteq A_\chi$ and 
		\[ (A_\chi,G,\chi) \geqslant_{(g),b} (\N_A(R)_\vhi,\N_G(R),\vhi)\]
 is normal with respect to $\N_{\tG'}(R)_\vhi$.		
	\end{enumerate}	
	Then there exists a blockwise $(\Lin_{p'}(\tG/G)\rtimes A)$-equivariant bijection $\widetilde\varOmega\colon\widetilde{\cI}\to\Alp(\tG\mid\cA)$ such that for every $\tchi\in\widetilde\cI$ and $\overline{(\tR,\tvhi)}=\widetilde\varOmega(\tchi)$,  \[(A_{\tchi},\tG,\tchi)\geqslant_b(\N_A(\tR)_{\tvhi},\N_{\tG}(\tR),\tvhi).\]
\end{thm}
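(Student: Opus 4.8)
The plan is to reduce Theorem~\ref{thm:Clifford-weights} to Proposition~\ref{prop-iso-special-ell'} by a two-step decomposition of the extension $\tG/G$ through the intermediate subgroup $\tG'$, using that $\tG'/G$ is a $p$-group and $\tG/\tG'$ is a $p'$-group. First I would pass from $G$ up to $\tG'$. Since $\tG'/G=(\tG/G)_p$ is an abelian $p$-group, each $\chi\in\cI$ extends to $\tG'_\chi$, and the hypothesis (iv) supplies, for each $\chi\in\cI$, a weight $(R,\vhi)$ with $\N_A(R)_\vhi\subseteq A_\chi$ and a relation $(A_\chi,G,\chi)\geqslant_{(g),b}(\N_A(R)_\vhi,\N_G(R),\vhi)$ that is normal with respect to $\N_{\tG'}(R)_\vhi$. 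This is precisely the data consumed by Theorem~\ref{thm:criterion-block}: in fact, conditions (i)--(iv) of that theorem are exactly what (i)--(iv) here provide once one takes $E$ a complement to $\tG$ in the relevant subgroup (the hypotheses here are modelled on its conclusion). So I would first invoke the machinery behind Theorem~\ref{thm:criterion-block} --- concretely Lemma~\ref{lem:establish-iso} and the surrounding orbit-counting argument --- to obtain from the $\geqslant_{(g),b}$-relations a blockwise $A_B$-equivariant bijection between $\cI/\!\sim_{\tG'}$ and a transversal-indexed family of weights, together with genuine $\geqslant_b$ relations for the triples over $\tG'$. The upshot of this step is a set $\cI'\subseteq\Irr(\tG')$ (the characters of $\tG'$ lying over $\cI$ and below $\widetilde\cI$) and a blockwise $A$-equivariant bijection $\cI'\to\cA'$ onto a set of weights of $\tG'$, with $(A_{\chi'},\tG',\chi')\geqslant_b(\N_A(R')_{\vhi'},\N_{\tG'}(R'),\vhi')$ for each matched pair --- i.e.\ exactly the hypothesis format of Proposition~\ref{prop-iso-special-ell'} with $\tG'$ in the role of ``$G$''.

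The second step is then a direct application of Proposition~\ref{prop-iso-special-ell'} to the pair $\tG'\unlhd\tG$. Here I must check the proposition's hypotheses: $\tG/\tG'$ is abelian and, since $\tG'/G=(\tG/G)_p$, it is a $p'$-group --- this is where condition (i) is essential. I also need that every $\chi'\in\cI'$ extends to $\tG_{\chi'}$; this follows by combining condition (ii) (extension of $\chi\in\cI$ to $\tG_\chi$) with the fact that $\chi'$ is an extension of $\chi$ to $\tG'_{\chi}$, using standard Clifford theory of towers of extensions (a character extends from $G$ to $\tG_\chi$ iff it extends to $\tG'_\chi$ and the latter extends further, modulo the usual cocycle bookkeeping), together with condition (iii) which pins down which characters of $\tG'$ over $\chi$ lie below $\widetilde\cI$. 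Applying Proposition~\ref{prop-iso-special-ell'} then yields a blockwise $(\Lin(\tG/\tG')\rtimes A)$-equivariant bijection $\Irr(\tG\mid\cI')\to\Alp(\tG\mid\cA')$ with the desired $\geqslant_b$-relation $(A_{\tchi},\tG,\tchi)\geqslant_b(\N_A(\tR)_{\tvhi},\N_{\tG}(\tR),\tvhi)$. Finally I would identify $\Irr(\tG\mid\cI')$ with $\widetilde\cI$ and $\Alp(\tG\mid\cA')$ with $\Alp(\tG\mid\cA)$ --- transitivity of ``lying over'' for the tower $G\subseteq\tG'\subseteq\tG$ gives the first, and the analogous transitivity of ``covering'' for weights (via compatibility of the Dade--Glauberman--Nagao correspondence with a tower of $p$-extensions, which is where $\tG'/G$ being the $p$-part matters) gives the second --- and check that the composite equivariance group $\Lin_{p'}(\tG/G)\rtimes A$ is the amalgam of $\Lin_{p'}(\tG'/G)\rtimes A$ and $\Lin(\tG/\tG')\rtimes A$ acting compatibly.

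The main obstacle I anticipate is not any single step in isolation but the careful tracking of normality (``normal with respect to $\N_{\tG'}(R)_\vhi$'') and equivariance through the two stages. Proposition~\ref{prop-iso-special-ell'} as stated produces a $\geqslant_b$ relation but does not itself assert a ``normal with respect to'' refinement; fortunately, for the final conclusion of Theorem~\ref{thm:Clifford-weights} only the plain $\geqslant_b$ is required, so the normality hypothesis in (iv) is needed only to feed the first ($G\to\tG'$) step through the Theorem~\ref{thm:criterion-block} machinery, not to be propagated to the end. The genuinely delicate point is therefore verifying that the intermediate data manufactured in step one --- in particular the choice of representatives and the induced projective representations over $\tG'$ --- really does satisfy the $\tG$-equivariance (not merely $A$-equivariance) needed to run Proposition~\ref{prop-iso-special-ell'}, and that the DGN correspondence used to define ``covering'' at the $\tG'$ level and at the $\tG$ level composes correctly; this is the transitivity-of-covering lemma that must be stated and checked (or cited from \cite{BS22}/\cite{NS14}) before the two bijections can be glued. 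I would isolate that compatibility as a separate lemma and then the rest is assembly.
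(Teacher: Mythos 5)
Your overall decomposition --- treat the $p$-part $\tG'/G=(\tG/G)_p$ first, then dispose of the $p'$-part $\tG/\tG'$ by Proposition~\ref{prop-iso-special-ell'} --- is exactly the paper's strategy, and your second step (including the check that $\tG/\tG'$ is an abelian $p'$-group) is fine. The gap is in the first step. You propose to run the $G\to\tG'$ passage ``through the Theorem~\ref{thm:criterion-block} machinery, concretely Lemma~\ref{lem:establish-iso}'', but that machinery solves a different problem: it \emph{manufactures} relations $(A_{\chi_0},G,\chi_0)\geqslant_{(g),b}(\N_A(R_0)_{\vhi_0},\N_G(R_0),\vhi_0)$ --- i.e.\ the content of hypothesis (iv) --- from a block-compatible bijection at the $\tG$-level. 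It never lifts such a relation to a triple over $\tG'$ paired with a triple over $\N_{\tG'}(\tR)$; in particular it never replaces the $p$-subgroup $R$ by the strictly larger radical subgroup $\tR$ that a covering weight of $\tG'$ must carry (since $\tG'/G$ is a $p$-group, a weight of $\tG'$ covering $(R,\vhi)$ has the form $(\tR,\tvhi')$ with $\tR/R$ a Sylow $p$-subgroup of $\N_{\tG'}(R)_\vhi/\N_G(R)$). So invoking Lemma~\ref{lem:establish-iso} here does not produce the input that Proposition~\ref{prop-iso-special-ell'} needs.

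What the first step actually requires, and what your proposal leaves unresolved, is the following chain: (a) the relation $(\N_A(R)_{\hvhi},\N_G(R)\tR,\hvhi)\geqslant_b(\N_A(\tR)_{\hvhi'},\N_G(\tR)\tR,\hvhi')$ supplied by the Dade--Glauberman--Nagao correspondence via \cite[Lemma~3.12, Thm.~5.13]{NS14} --- this is precisely the ``transitivity of covering'' point you flag as an anticipated obstacle but do not settle; (b) the reconciliation of the two extensions of $\vhi$ to $\N_{\tG'}(R)_\vhi$, namely the DGN-side extension $\hvhi$ from (a) and the extension $\sigma^{(\chi)}(\hchi)$ produced by the character-triple map applied to the canonical extension $\chi'$ of $\chi$ singled out by hypothesis (iii). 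These differ by a linear character $\iota\in\Irr(\N_{\tG'}(R)_\vhi/\N_G(R))$, and the \emph{only} way to absorb $\iota$ is the ``normal with respect to $\N_{\tG'}(R)_\vhi$'' clause of (iv), which lets you replace $\cP'$ by $\cP''=\iota\cP'$ without destroying $\geqslant_{(g),b}$. Your remark that normality is consumed ``through the Theorem~\ref{thm:criterion-block} machinery'' therefore mislocates its role (in that theorem normality is a conclusion, not a hypothesis); here it is the device that aligns the global and local extensions. Finally one needs (c) the going-up construction $\widehat\cP=\cU\otimes\cP$ as in \cite[Prop.~3.9]{NS14} to get $(A_{\chi'},\tG'_\chi,\chi')\geqslant_b(\N_A(R)_{\hvhi},\N_{\tG'}(R)_\vhi,\hvhi)$, followed by transitivity (Lemma~\ref{lem-partial-order}(b)) to splice in (a) and Clifford induction to descend to $(A_{\tchi'},\tG',\tchi')\geqslant_b(\N_A(\tR)_{\tvhi'},\N_{\tG'}(\tR),\tvhi')$. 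Without (a)--(c) spelled out, the reduction to Proposition~\ref{prop-iso-special-ell'} is not yet a proof.
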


\begin{proof}
By (iii), we have $\cI=\Irr(G\mid\widetilde\cI)$.	
Let $\chi\in\cI$ and $\overline{(R,\vhi)}\in\cA$ be as in (iv), that is, they satisfy that the $\tG'$-orbit of $\overline{(R,\vhi)}$ corresponds to $\chi$ via $\varOmega$ such that $\N_A(R)_\vhi\subseteq A_\chi$ and
\begin{equation}\label{equ-char-iso1}
	(A_\chi,G,\chi) \geqslant_{(g),b} (\N_A(R)_\vhi,\N_G(R),\vhi)
\addtocounter{thm}{1}\tag{\thethm}
\end{equation}
which is given by $(\cP,\cP')$ normally with respect to $\N_{\tG'}(R)_\vhi$.
By (b), $\vhi$ extends to $\N_{\tG}(R)_\vhi$.
Since $\varOmega$ is $A$-equivariant and $\N_A(R)_\vhi\subseteq A_\chi$, one has $A_\chi=\N_A(R)_\vhi\tG'_\chi$.
Denote by $\sigma$ the linear map defined by $(\cP,\cP')$ as in Theorem~\ref{thm:iso-char-triple}.
Also note that $\Z(G)=G\cap\Z(\tG')$.

Now $\Irr(\tG\mid\chi)\cap \widetilde\cI$ is a non-empty $\Lin_{p'}(\tG/G)$-orbit, we have that there exists a unique extension $\chi'$ of $\chi$ to $\tG'_\chi$ such that \[\Irr(\tG\mid\chi')=\Irr(\tG\mid\chi)\cap \widetilde\cI.\]
Since $\cI$ is $A$-stable, $\chi'$ is $A_\chi$-invariant. 
In addition, we have $\Z(\tG')_p\subseteq\ker(\chi')$ because $\widetilde{\cI}\subseteq\Irr(\tG\mid1_{\Z(\tG')_p})$.
Let $\hchi=\Res^{\tG'}_{G\N_{\tG'}(R)_\vhi}(\chi')$ and $\hvhi_1=\sigma^{(\chi)}_{G\N_{\tG'}(R)_\vhi}(\hchi)$.
Then $\hvhi_1$ is $\N_A(R)_\vhi$-invariant and $\Z(\tG')_p\le\ker(\hvhi_1)$.

Let $(\tR,\tvhi)$ be a weight of $\tG$ covering $(R,\vhi)$.
Then by \cite[\S2.C]{BS22}, $\tR/R\cong\tR\N_G(R)/\N_G(R)$ is a Sylow $p$-subgroup of $\N_{\tG}(R)_\vhi/\N_G(R)$, which is isomorphic to $\N_{\tG'}(R)_\vhi/\N_G(R)$.
Also \[\tR\N_G(\tR)/R\cong\N_G(\tR)/R\ti\tR/R.\]
Let $\vhi_0\in\pi_{\tR/R}(\vhi)\in\dz(\N_G(\tR)/R)$ be the DGN-correspondent of $\vhi$ and \[\hvhi'=\vhi_0\ti 1_{\tR/R}\in\Irr(\tR\N_G(\tR)\mid\vhi_0)\] so that $\tvhi$ covers $\hvhi'$.
By Lemma~3.12 and Theorem~5.13 of \cite{NS14}, there exists an $\N_A(\tR)_\vhi/R$-invariant character $\widehat{\vhi}\in\Irr(\N_G(R)\tR/R)$ such that 
\begin{equation}\label{equ-char-iso}
	(\N_A(R)_{\hvhi},\N_G(R)\tR,\hvhi)\geqslant_{b}(\N_A(\tR)_{\hvhi'},\N_G(\tR)\tR,\hvhi').	
\addtocounter{thm}{1}\tag{\thethm}
\end{equation}
In fact, $\N_A(R)_{\hvhi}=\N_A(R)_{\vhi}$, $\N_A(\tR)_{\hvhi'}=\N_A(\tR)_\vhi$ and $\hvhi$ is an extension of $\vhi$.
As $\Z(\tG')_p\le\ker(\hvhi')$, we have $\Z(\tG')_p\le\ker(\hvhi)$.

Now $\hvhi$ and $\hvhi_1$ are two $\N_A(R)_\vhi$-invariant extensions of $\vhi$.
So $\hvhi=\hvhi_1\iota$, where $\iota\in\Irr(\N_{\tG'}(R)_\vhi/\N_G(R))$ is $\N_A(R)_\vhi$-invariant.
By (a), $\C_{\tG'}(G)=\Z(\tG')$.
This gives $\hvhi(c)=\hvhi_1(c)\ne 0$ for $c\in\C_{\tG'}(G)$. Hence $\C_{\tG'}(G)\le\ker(\iota)$.
Since (\ref{equ-char-iso1}) is given by $(\cP,\cP')$ normally with respect to $\N_{\tG'}(R)_\vhi$, 
there exists a projective $\CC$-representation $\cP''$ associated with the character triple $(\N_A(R)_\vhi,\N_G(R),\vhi)$ such that
 $(\cP,\cP'')$ also gives \[(A_\chi,G,\chi) \geqslant_{(g),b} (\N_A(R)_\vhi,\N_G(R),\vhi),\] and $\cP''(n)=\iota(n)\cP'(n)$ for any $n\in \N_{\tG'}(R)_\vhi$.
We change $\cP'$ to be $\cP''$ (which is also denoted by $\cP'$) so that the corresponding linear map, which is also denoted by $\sigma$, satisfies \[\sigma_{\tG'_\chi}(\chi')=\sigma_{G\N_{\tG'}(R)}(\hchi)=\hvhi.\]
 Let $\alpha$ and $\alpha'$ be the factor sets of $\cP$ and $\cP'$ respectively.

Suppose that $\chi'$ is afforded by $\cQ\otimes\cP\rceil_{\tG'_\chi}$ where $\cQ$ is a projective $\CC$-representation of $\tG'_\chi/G$.
Then $\hvhi$ is afforded by $\cQ\rceil_{\N_{\tG'}(R)_\vhi}\otimes\cP'\rceil_{\N_{\tG'}(R)_\vhi}$.
Let $\widehat\cP$ be a projective $\CC$-representation associated with $(A_{\chi'},\tG'_\chi,\chi')$ such that $\widehat\cP\rceil_{\tG'_\chi}=\cQ\otimes\cP\rceil_{\tG'_\chi}$. 
Let $\beta$ be the factor set of $\widehat\cP$. 
As in the proof of \cite[Prop.~3.9]{NS14}, we can write \[\widehat\cP(g)=\cU(g)\otimes\mathcal P(g)\ \ \textrm{for every}\ \ g\in A_\chi,\] where $\cU$ is a projective $\CC$-representation of $A_\chi$.
Therefore, the factor of $\cU$ is $\beta\alpha^{-1}$, and $\cU\rceil_{\tG'_\chi}=\cQ$.
We define $\widehat\cP'$ to be the projective $\CC$-representation associated with $(\N_A(R)_{\hvhi},\N_{\tG'}(R)_{\hvhi},\hvhi)$ by \[\widehat\cP'(x)=\cP'(x)\otimes\cU_{\N_A(R)_\vhi}(x)\] for every $x\in\N_A(R)_{\hvhi}$.
As by a similar argument of the proof of \cite[Prop.~3.9]{NS14} (and by Lemma~\ref{lem:iso-char-triples1} and its proof), $(\widehat\cP,\widehat\cP')$ gives 
\[(A_{\chi'},\tG'_\chi,\chi')\geqslant_{b}(\N_A(R)_{\hvhi},\N_{\tG'}(R)_{\vhi},\hvhi')\]
with corresponding character triple isomorphism $\sigma'$ satisfying that $\bl(\xi)=\bl(\sigma'_J(\xi))^{J}$ for every $\tG'_\chi\le J\le A_{\chi'}$ and every $\xi\in\Irr(J\mid\hchi)$.

By the proof of Lemma~\ref{lem-partial-order}, we conclude from (\ref{equ-char-iso}) that 
\[(A_{\chi'},\tG'_\chi,\chi')\geqslant_{b}(\N_A(\tR)_{\hvhi'},\N_{\tG'}(\tR)_{\vhi},\hvhi')\]	
and similar as the proof of Proposition~\ref{prop-iso-special-ell'}, we can prove
\begin{equation}\label{equ-iso-triples-tilde}
(A_{\tchi'},\tG',\tchi')\geqslant_b(\N_A(\tR)_{\tvhi'},\N_{\tG'}(\tR),\tvhi')
\addtocounter{thm}{1}\tag{\thethm}
\end{equation}
for $\tchi'=\Ind_{\tG'_\chi}^{\tG'}(\chi')$ and $\tchi'=\Ind_{\N_{\tG'}(\tR)_\vhi}^{\N_{\tG'}(\tR)}(\vhi')$.
We remark that \[\Irr(\tG'\mid\chi)\cap \Irr(\tG'\mid\widetilde\cI)=\{\tchi'\}\] and $(\tR,\tvhi')$ is the unique weight of $\tG'$ covering $(R,\vhi)$.

Let $\cI'=\Irr(\tG'\mid\cI)\cap \Irr(\tG'\mid\widetilde\cI)$ and $\cA'=\Alp(\tG'\mid\cA)$.
Then using the above notation, \[\chi\mapsto\chi'\ \ \textrm{and} \ \ (R,\vhi)\mapsto(\tR,\tvhi')\] induce bijections $\cI/\!\sim_{\tG'}\to\cI'$ and $\cA/\!\sim_{\tG'}\to\cA'$ respectively.
From this we have a bijection $\varOmega'\colon\cI'\to\cA'$ such that  (\ref{equ-iso-triples-tilde}) holds for every $\tchi'\in\cI'$ and $\varOmega'(\tchi')=\overline{(\tR,\tvhi')}$.
In this way, we may assume that $\tG'=G$ (i.e., $\tG/G$ is a $p'$-group), and thus we can complete the proof by Proposition~\ref{prop-iso-special-ell'}.
\end{proof}

%%%%%%%%%%%%%%%%%%%%%%%%%%%%%%%%%%%%%%%%%%%%%%%%%%%%%%%%%%%%%%%%%%%%%%%%%

\vskip 1pc
\section*{Acknowledgements} 
The author is indebted to Jiping Zhang for useful conversations and suggestions, and to Gunter Malle for his thorough reading of the material presented here and helpful comments.
%%%%%%%%%%%%%%%%%%%%%%%%%%%%%%%%%%%%%%%%%%%%%%%%%%%%%%%%%%%%%%%%%%%%%%%%%

\end{document}